\documentclass[a4paper,10pt, oneside]{article}
\usepackage{authblk}
\usepackage{amsmath,amssymb,amsthm,mathrsfs,graphicx}
\usepackage{floatrow,float}
\usepackage[titletoc, title]{appendix}
\usepackage[colorlinks,linkcolor=blue,citecolor=blue]{hyperref}
\usepackage{etoolbox}
\usepackage{longtable}
\usepackage{diagbox}
\usepackage{booktabs,makecell,multirow}
\usepackage[capitalise,nosort]{cleveref}
\usepackage{cases,color}

\crefname{equation}{}{}
\crefname{lemma}{Lemma}{Lemmas}
\crefname{theorem}{Theorem}{Theorems}
\crefname{discr}{Discretization}{Discretizations}

\DeclareMathOperator{\D}{D}


\newcommand{\dual}[1]{\langle {#1} \rangle}
\newcommand{\Dual}[1]{\left\langle {#1} \right\rangle}

\newcommand{\nm}[1]{\lVert {#1} \rVert}
\newcommand{\bnm}[1]{\big\lVert {#1} \big\rVert}
\newcommand{\Nm}[1]{\left\lVert {#1} \right\rVert}
\newcommand{\snm}[1]{\lvert {#1} \rvert}

\newcommand{\Snm}[1]{\left\lvert {#1} \right\rvert}
\newcommand{\ssnm}[1]
{
  \left\vert\kern-0.25ex
  \left\vert\kern-0.25ex
  \left\vert
  {#1}
  \right\vert\kern-0.25ex
  \right\vert\kern-0.25ex
  \right\vert
}


\newtheorem{lemma}{Lemma}[section]
\newtheorem{remark}{Remark}[section]
\newtheorem{theorem}{Theorem}[section]

%
\AtBeginDocument{
	\label{CorrectFirstPageLabel}
	
}

\begin{document}
\title{ Numerical analysis of two Galerkin discretizations with graded temporal grids for fractional evolution equations}


\author[1]{Binjie Li\thanks{libinjie@scu.edu.cn, libinjie@aliyun.com}}
\author[2]{Tao Wang\thanks{Corresponding author: wangtao5233@hotmail.com}}
\author[1]{Xiaoping Xie\thanks{xpxie@scu.edu.cn}}
\affil[1]{School of Mathematics, Sichuan University}
\affil[2]{South China Reasearh Center for Applied Mathmatics and Interdisciplinary Studies, South China Normal University}
%



\maketitle

\begin{abstract}{
Two numerical methods with graded temporal grids are analyzed for  fractional evolution equations.	One is a low-order discontinuous Galerkin (DG) discretization in the case of fractional order $0<\alpha<1$, and the other one is a low-order Petrov Galerkin (PG) discretization in the case of fractional order $1<\alpha<2$. By a new duality technique, pointwise-in-time error estimates of  first-order and  $ (3-\alpha) $-order temporal accuracies are respectively derived for DG and PG, under  reasonable regularity assumptions on the initial value.  Numerical experiments are performed to verify the theoretical results.}
\end{abstract}

\medskip\noindent{\bf Keywords:} fractional diffusion-wave equation, graded temporal grid, convergence


\section{Introduction}
Let $ X $ be a separable Hilbert space with inner product $ (\cdot,\cdot)_X $.
Assume that the linear operator $ A: D(A) \subset X \to X $ is densely defined
and admits a bounded inverse $ A^{-1}: X \to X $, which is compact, symmetric
and positive. Consider the following time  fractional evolution equation:
\begin{equation}
\label{eq:model}
(\D_{0+}^\alpha (u-u_0))(t) + A u(t) = 0,
\quad 0 < t \leqslant T,
\end{equation}
where $ \alpha \in (0,2)\setminus \{1\} $, $ 0 < T < \infty $, $ u_0 \in X $ and
$ \D_{0+}^\alpha $ is a Riemann-Liouville   fractional derivative operator of
order $ \alpha $. Note that \cref{eq:model} is  usually called as a time fractional diffusion or wave equation when $A$ is a second order elliptic operator.



There are quite a few research works on the numerical
treatment of     time fractional evolution equations.  Let us briefly introduce four types of numerical methods for the discretization of  time fractional evolution equations.  
The first-type methods use convolution quadrature to approximate the fractional integral (derivative). These methods is very effective,  but they require the temporal grid to be uniform  (cf. \cite{Lubich1988,Lubich1996,Cuesta2006,Zeng2013,Jin2016}).  The second-type methods  use L1 scheme to approximate the fractional derivative (cf. \cite{Sun2006,Jin2016-L1,Yan2018,Li-Wang-Xie2019Wave,Liao2018}). Such methods are popular and easy to implement. 
  The third-type methods are spectral methods (cf. \cite{Li2018-wave, yang2016spectral,Li2009,Mao2016Efficient,ZayernouriA}), which use nonlocal basis functions to approximate the solution. The accuracy of spectral methods is high, provided that the solution or data is smooth enough. The fourth-type methods are  finite element methods (cf. \cite{Mustapha2012Uniform,Mustapha2013,Mclean2015Time,Luo2019,Li2018Aspace,Li2020}), which use local basis functions to approximate the solution. These methods are time-stepping, and easy to design   high order schemes.  It should be mentioned that the finite element method is identical to the L1 scheme in some cases (cf. \cite{Jin2017Discrete,Li-Wang-Xie2019Wave}).

Most of  the convergence analyses for   the  numerical  methods mentioned above are based on the assumption that   the exact solution is smooth enough. However,   the solution of a  fractional equation generally has singularity near the origin despite how smooth the data is (cf. \cite{Jin2016,Li2019-frac-diffu}).  In fact, the main difficulty is to derive the error estimates without any regularity restriction on the solution, especially for the case with nonsmooth data. When using uniform temporal grids, the Laplace transform technique is a  powerful tool for  error estimation in case of nonsmooth data (cf. \cite{Lubich1996,Cuesta2006,Mclean2015Time,Jin2016-L1,Yan2018,Li-Wang-Xie2019Wave}).   
We note that the non-uniform temporal grids is also useful to handle the singularity of fractional equations (cf. \cite{Mclean2009,Stynes2017,Liao2018,MustaphaAn}), but the corresponding numerical analysis seems rather complicated.

McLean and Mustapha \cite{Mclean2009} analyzed DG methods with graded temporal grids for    a variant form of \cref{eq:model}: 
\begin{equation}\label{eq:model2}
\begin{aligned}
\partial_{t} u + \D_{0+}^{1-\alpha} A u(t) &= 0, \quad 0<t\leqslant T, \\
u(0)&=u_0,
\end{aligned}
\end{equation}
which is obtained by applying $\D^{1-\alpha}_{0+}$ to the both sides of $\cref{eq:model}$. For \cref{eq:model2} with $0<\alpha<1$,   they derived first-order temporal accuracy for a piecewise-constant DG  under the condition that $u_0 \in D(A^{\nu})$ for $\nu>0$. For  the case $1<\alpha<2$,  they proved optimal error bounds for the piecewise-constant DG and a piecewise-linear DG  under the condition that 
\begin{align*}
t \nm{A \partial_{t}u(t)}_X+t^2\nm{A\partial_{tt}u(t)}_X &\leqslant Ct^{\sigma-1}, \quad  0<t\leqslant T,\\
 \nm{\partial_{t}u(t)}_X+t\nm{\partial_{tt}u(t)}_X &\leqslant Ct^{\sigma-1}, \quad    0<t\leqslant T,
\end{align*}
where $\sigma>0$ is a constant. For a  fractional reaction-subdiffusion equation,   Mustapha \cite{MustaphaAn}  derived second-order  temporal accuracy  for the L1 scheme with graded temporal grids    under the condition that 
	\[
	\nm{u(t)}_{H^2} \leqslant C, \quad  \nm{\partial_{t} u(t)}_{H^2}+t^{1-\alpha/2}\nm{\partial_{tt} u (t)}_{H^1} + t^{2-\alpha/2}\nm{\partial_{ttt}u(t)}_{H^1}\leqslant C t^{\sigma-1}, 
	\]
	for all $0<t\leqslant T$. 

Though being equivalent to \eqref{eq:model2} in some sense, equation \cref{eq:model} leads to different kinds of numerical methods.  For the fractional diffusion equation with nonsmooth data, Li et~al. \cite{Li2019SIAM} obtained  optimal error estimates for a low order DG.  It should be noticed that their analysis is optimal in the sense of some space-time Sobolev norms, which is not very sharp compare with the pointwise-in-time error estimates. For  a fractional diffusion equation, Stynes et~al. \cite{Stynes2017} analyzed  the L1 scheme with graded temporal
grids and derived temporal accuracy $ O(N^{\alpha-2}) $ ($ N $ is the number of nodes in
the temporal grid)  
 under the condition that
\[
\nm{\partial_x^{(4)}u(t)}_{L^\infty} \leqslant C, 
\quad 
\nm{\partial_{tt}u(t)}_{L^\infty} \leqslant C t^{\alpha-2}, \quad  0< t \leqslant T .
\]
 Liao et~al. \cite{Liao2018} obtained
temporal accuracy $ O(N^{\alpha-2}) $ for a reaction-subdiffusion equation by assuming that
\[
\nm{\partial_x^{(4)} u(t)}_{L^2} \leqslant C,
\quad 
\nm{\partial_{tt} u(t)}_{L^2} \leqslant C t^{\sigma-2},  \quad  0<t\leqslant T,
\]
where $ \sigma \in (0,2) \setminus \{1\} $.  
Although the regularity assumptions  above are reasonable in some situations, it is worthwhile to carry out  error estimation for some numerical methods with lesser regularity assumptions on the data. Moreover, as far as we know, there is no rigorous numerical analysis for \cref{eq:model} with $1<\alpha<2$ and graded temporal grids.


In this paper, we consider the DG and PG approximations for time fractional evolution equation \cref{eq:model} with $0<\alpha<1$  and $1<\alpha<2$ respectively. These methods are identical to the L1 scheme  when the temporal grid is uniform. We develop a new duality technique for the pointwise-in-time  error estimation, which is inspired by the local error estimation for the standard linear finite element method \cite{schatz1998pointwise,Brenner2008}. The key point of the analysis is the weighted estimate of a  ``regularized Green function" (cf.~\cref{lem:wg,lem:G2}).  
 For $0<\alpha<1$ and  $ u_0 \in D(A^\nu) $ with $ 0 < \nu \leqslant 1 $, we obtain the first-order temporal accuracy for the DG approximation with graded grids (cf.~\cref{thm:conv-diffu}).
For $1<\alpha<2$ and $ u_0 \in D(A^\nu) $ with $ 1/2 < \nu \leqslant 1 $, we obtain the $(3-\alpha)$-order temporal accuracy for the PG approximation with  graded grids (cf.~\cref{thm:conv-wave}).

The rest of this paper is organized as follows.  \cref{sec:pre} gives some notations and basic results, including Sobolev spaces, fractional calculus operators, spectral decomposition of $A$, solution theory and discretization spaces. \cref{sec:diffu} and   \cref{sec:wave}  establish  the error estimates for problem \cref{eq:model} with $0<\alpha<1$ and $1<\alpha<2$ respectively.  \cref{sec:numer} performs two numerical experiments to verify the theoretical results.
The last section is a conclusion.
\section{Preliminaries} 
\label{sec:pre}
Throughout this paper, we will use the following conventions: if $ \omega
\subset \mathbb R $ is an interval, then $ \dual{p,q}_\omega $ denotes the
Lebesgue or Bochner integral $ \int_\omega p q $ for scalar or vector valued
functions $ p $ and $ q $ whenever the integral makes sense; for a Banach space
$ W $, we use $ \dual{\cdot,\cdot}_W $ to denote a duality paring between $ W^*
$ (the dual space of $ W $) and $ W $; the notation $ C_\times $ denotes a
positive constant depending only on its subscript(s), and its value may differ
at each occurrence; for any function $ v $ defined on $ (0,T) $, by $ v(t-) $, $
0 < t \leqslant T $ we mean $ \lim_{s \to {t-}} v(s) $ whenever this limit
exists; given $ 0 < a \leqslant T $, the notation $ (a-t)_{+} $ denotes a
function of variable $ t $ defined by
\[
  (a-t)_{+} := \begin{cases}
    a-t & \text{ if } 0 \leqslant t < a, \\
    0 & \text{ if } a \leqslant t \leqslant T.
  \end{cases}
\]

\medskip\noindent{\bf Sobolev spaces.} Assume that $ -\infty < a < b < \infty $.
For any $ m \in \mathbb N $, define
\[
  {}_0H^m(a,b) := \left\{
    v \in H^m(a,b):\ v^{(k)}(a) = 0 \quad \forall 0 \leqslant k < m
  \right\}
\]
and endow this space with the norm
\[
  \nm{v}_{{}_0H^m(a,b)} := \nm{v^{(m)}}_{L^2(a,b)}
  \quad \forall v \in {}_0H^m(a,b),
\]
where $ H^m(a,b) $ is an usual Sobolev space and $ v^{(k)} $, $ 1 \leqslant k
\leqslant m $, is the $k$-th order weak derivative of $ v $. For any $ m \in
\mathbb N_{>0} $ and $ 0 < \theta < 1 $, define
\[
  {}_0H^{m-1+\theta}(a,b) :=
  ({}_0H^{m-1}(a,b), {}_0H^m(a,b))_{\theta,2},
\]
where $ (\cdot,\cdot)_{\theta,2} $ means the interpolation space defined by the
$ K $-method  \cite{Lunardi2018}. The space $ {}^0H^\gamma(a,b) $, $ 0
\leqslant \gamma < \infty $, is defined analogously. For each $ -\infty < \gamma
\leqslant 0 $, we use $ {}_0H^\gamma(a,b) $ and $ {}^0H^\gamma(a,b) $ to denote
the dual spaces of $ {}^0H^{-\gamma}(a,b) $ and $ {}_0H^{-\gamma}(a,b) $,
respectively. The embedding $ L^2(a,b) \hookrightarrow {}_0H^{-\gamma}(a,b) $, $
\gamma > 0 $, is understood in the conventional sense that
\[
  \dual{v,w}_{{}^0H^\gamma(a,b)} := \dual{v,w}_{(a,b)}
  \quad \forall w \in {}^0H^\gamma(a,b),
  \quad \forall v \in L^2(a,b).
\]
We will also use the following space:
\[
  H^{2\theta}(a,b) :=
  \left( L^2(a,b), H^2(a,b) \right)_{\theta,2},
  \quad \theta \in (0,1).
\]
Note that if $ 0 < \gamma < 1/2 $ then
\[
  {}_0H^\gamma(a,b) = {}^0H^\gamma(a,b) = H^\gamma(a,b)
  \quad\text{with equivalent norms.}
\]

\medskip\noindent{\bf Fractional calculus operators.} Assume that $ -\infty < a
< b < \infty $. For $ -\infty < \gamma < 0 $, define
\begin{align*}
  \big(\D_{a+}^\gamma v\big)(t) &:=
  \frac1{ \Gamma(-\gamma) }
  \int_a^t (t-s)^{-\gamma-1} v(s) \, \mathrm{d}s, \quad a < t < b, \\
  \big(\D_{b-}^\gamma v\big)(t) &:=
  \frac1{ \Gamma(-\gamma) }
  \int_t^b (s-t)^{-\gamma-1} v(s) \, \mathrm{d}s, \quad a < t < b,
\end{align*}
for all $ v \in L^1(a,b) $, where $ \Gamma(\cdot) $ is the gamma function. In
addition, let $ \D_{a+}^0 $ and $ \D_{b-}^0 $ be the identity operator on $
L^1(a,b) $. For $ j - 1 < \gamma \leqslant j $ with $ j \in \mathbb N_{>0} $,
define
\begin{align*}
  \D_{a+}^\gamma v & := \D^j \D_{a+}^{\gamma-j}v, \\
  \D_{b-}^\gamma v & := (-\D)^j \D_{b-}^{\gamma-j}v,
\end{align*}
for all $ v \in L^1(a,b) $, where $ \D $ is the first-order differential
operator in the distribution sense. The vector-valued version fractional
calculus operators are defined analogously. Assume that $ 0 < \beta \leqslant
\gamma < \beta + 1/2 $. For any $ v \in {}_0H^\beta(a,b) $, define $
\D_{a+}^\gamma v \in {}_0H^{\beta-\gamma}(a,b) $ by that
\[
  \Dual{\D_{a+}^\gamma v,w}_{{}^0H^{\gamma-\beta}(a,b)} :=
  \Dual{\D_{a+}^\beta v, \D_{b-}^{\gamma-\beta} w}_{(a,b)}
\]
for all $ w \in {}^0H^{\gamma-\beta}(a,b) $. For any $ v \in {}^0H^\beta(a,b) $,
define $ \D_{b-}^\gamma v \in {}^0H^{\beta-\gamma}(a,b) $ by that
\[
  \Dual{\D_{b-}^\gamma v,w}_{{}_0H^{\gamma-\beta}(a,b)} :=
  \Dual{\D_{b-}^\beta v, \D_{a+}^{\gamma-\beta} w}_{(a,b)}
\]
for all $ w \in {}_0H^{\gamma-\beta}(a,b) $. By \cref{lem:regu-basic} and a
standard density argument, it is easy to verify that the above definitions are
well-defined and that if
\[
  \Dual{\D_{a+}^\gamma v, w}_{{}^0H^{\beta_1}(a,b)}
  \quad \text{ and } \quad
  \Dual{\D_{a+}^\gamma v, w}_{{}^0H^{\beta_2}(a,b)}
\]
both make sense by the definition, then they are identical.

\medskip\noindent{\bf Spectral decomposition of $ A $.} Assume that the  separable Hilbert space $ X $ is
infinite dimensional. It is well known that (cf. \cite{Zeidler1995})
there exists an orthonormal basis, $\{\phi_n: n \in \mathbb N \} \subset D(A) $, 
of $ X $ such that
\[
  A \phi_n =\lambda_n \phi_n,
\]
where $ \{ \lambda_n: n \in \mathbb N \} $ is a positive non-decreasing sequence
and $\lambda_n\to\infty$ as $n\to\infty$. For any $ -\infty< \beta < \infty $,
define
\[
  D(A^{\beta/2}) := \left\{
    \sum_{n=0}^\infty c_n \phi_n:\
    \sum_{n=0}^\infty \lambda_n^\beta c_n^2 < \infty
  \right\}
\]
and equip this space with the norm
\[
  \Big\|\sum_{n=0}^\infty c_n \phi_n  \Big\|_{D(A^{\beta/2})}
  := \left(
    \sum_{n=0}^\infty \lambda_n^\beta c_n^2
  \right)^{1/2}.
\]

\medskip\noindent{\bf Solution theory.} For any $ \beta >0 $, define the
Mittag-Leffler function $ E_{\alpha,\beta}(z) $ by
\[
  E_{\alpha,\beta}(z) := \sum_{k=0}^\infty
  \frac{z^k}{\Gamma(k\alpha + \beta)}  \quad \forall z \in \mathbb C,
\]
which admits the following growth estimate (cf. \cite{Podlubny1998}):
\begin{equation}
  \label{eq:ml_grow}
  \snm{E_{\alpha,\beta}(-t)} \leqslant
  \frac{C_{\alpha,\beta}}{1+t}
  \quad \forall t > 0.
\end{equation}
For any $ \lambda > 0 $, a straightforward calculation yields
\begin{equation}
  \label{eq:ode}
  \D_{0+}^\alpha \left( E_{\alpha,1}(-\lambda t^\alpha) - 1 \right) +
  \lambda E_{\alpha,1}(-\lambda t^\alpha) = 0
  \quad \forall t \geqslant 0.
\end{equation}
Therefore, the solution to problem \cref{eq:model} is of the form
(cf. \cite{Sakamoto2011})
\begin{equation}
  \label{eq:u}
  u(t) = \sum_{n=0}^\infty E_{\alpha,1}(-\lambda_n t^\alpha)
  (u_0, \phi_n)_X \, \phi_n, \quad 0 \leqslant t \leqslant T.
\end{equation}
For any $ 0 < t \leqslant T $, a straightforward calculation gives
\begin{align*}
  & u'(t) = -\sum_{n=0}^\infty \lambda_n t^{\alpha-1}
  E_{\alpha,\alpha}(-\lambda_n t^\alpha)
  (u_0,\phi_n)_X \phi_n, \\
  & u''(t) = -\sum_{n=0}^\infty \lambda_n t^{\alpha-2}
  E_{\alpha,\alpha-1}(-\lambda_n t^\alpha)
  (u_0,\phi_n)_X \phi_n.
\end{align*}
Hence, for $1<\alpha<2$, by \cref{eq:ml_grow} we obtain that 
\begin{align}
  t^{-1} \nm{u'(t)}_X + \nm{u''(t)}_X
  & \leqslant C_\alpha t^{\alpha\nu-2} \nm{u_0}_{D(A^\nu)},
  \label{eq:u'-l2-growth} \\
  t^{-1} \nm{u'(t)}_{D(A^{1/2})} + \nm{u''(t)}_{D(A^{1/2})}
  & \leqslant C_\alpha t^{\alpha(\nu-1/2)-2}
  \nm{u_0}_{D(A^\nu)}, \label{eq:u'-h1-growth}
\end{align}
where $ 0 \leqslant \nu \leqslant 1 $.

\medskip\noindent{\bf Discretization spaces.} Let $ t_j := (j/J)^\sigma T $ for
each $ 0 \leqslant j \leqslant J $, where $ J \in \mathbb N_{>0} $ and $ \sigma
\geqslant 1 $. Define
\begin{align*}
  W_\tau &:= \big\{
    v \in L^\infty(0,T;D(A^{1/2})\!):
    \text{$ v $ is constant on $ (t_{j-1},t_j) $ for each $ 1 \leqslant j
    \leqslant J $}
  \big\}, \\
  W_\tau^\text{c} &:= \big\{
    v \in C([0,T];D(A^{1/2})):\,
    \text{$ v $ is linear on $ (t_{j-1},t_j) $ for each $ 1 \leqslant j
    \leqslant J $}
  \big\}.
\end{align*}
For the particular case $ D(A)=\mathbb R
$, we use $ \mathcal W_\tau $ and $ \mathcal W_\tau^\text{c} $ to denote $ W_\tau $
and $ W_\tau^\text{c} $, respectively. Assume that $ Y = X \text{ or } \mathbb R $. For any $ v \in L^1(0,T;Y) $ and
$ w \in C([0,T];Y) $, define $ Q_\tau v \in L^\infty(0,T;Y) $ and $ \mathcal
I_\tau w \in C([0,T];Y) $ respectively by 
\begin{small}
\begin{align*}
  (Q_\tau v)(t) :=
  \frac1{t_j-t_{j-1}} \int_{t_{j-1}}^{t_j} v
  \,\,\, \text{ and } \,\,\,
  (\mathcal I_\tau w)(t) :=
  \frac{t_j-t}{t_j-t_{j-1}} w(t_{j-1}) +
  \frac{t-t_{j-1}}{t_j-t_{j-1}} w(t_j)
\end{align*}
\end{small}
for all $ t_{j-1} < t < t_j $ and $ 1 \leqslant j \leqslant J $. In the sequel,
we will always assume that $ \sigma \geqslant 1 $.

\section{Fractional diffusion equation (\texorpdfstring{$ 0 < \alpha < 1 $}))}
\label{sec:diffu}
This section considers the following discretization: seek $ U \in W_\tau $ such
that
\begin{equation} 
  \label{eq:U-diffu}
  \int_0^T \big( (\D_{0+}^\alpha + A)U, V \big)_X \, \mathrm{d}t =
  \int_0^T \big( \D_{0+}^\alpha u_0, V \big)_X \, \mathrm{d}t \quad \forall  V \in W_\tau .
\end{equation}
\begin{remark}
  By \cref{eq:u}, a straightforward calculation yields that
  \begin{equation}
    \label{eq:u-weak-diffu}
    \int_0^T \big( (\D_{0+}^\alpha u + A)u, V \big)_X \, \mathrm{d}t =
    \int_0^T \big( \D_{0+}^\alpha u_0, V \big)_X \, \mathrm{d}t
    \quad \forall V \in W_\tau.
  \end{equation}
\end{remark}

\begin{remark} 
 We note that  when using uniform temporal grids, the  discretization \cref{eq:U-diffu} is equivalent to
the L1 scheme \cite{Jin2017Discrete}.
\end{remark}

\begin{theorem} 
  \label{thm:conv-diffu}
  Assume that $ u_0 \in D(A^\nu) $ with $ 0 < \nu \leqslant 1 $. Then
  \begin{equation}
    \label{eq:conv-diffu-inf}
    \nm{u-U}_{L^\infty(0,T;X)} \leqslant
    C_{\alpha,\sigma,\nu,T} J^{-\min\{\sigma\nu\alpha,1\}}
    \nm{u_0}_{D(A^\nu)}.
  \end{equation}
\end{theorem}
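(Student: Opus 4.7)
The plan is to adapt the duality-based local pointwise error analysis from the standard finite element literature \cite{schatz1998pointwise,Brenner2008} to the Riemann--Liouville setting via a regularized Green function. Fix an arbitrary $ t^* \in (0,T] $ lying in the mesh interval $ (t_{j^*-1}, t_{j^*}] $ and a test vector $ \chi \in X $ with $ \nm{\chi}_X \leqslant 1 $; by duality it suffices to estimate $ (u(t^*) - U(t^*), \chi)_X $. Introduce the local averaging kernel
\[
\widetilde{\delta}(t) := \frac{1}{t_{j^*} - t_{j^*-1}}\, \mathbf{1}_{(t_{j^*-1}, t_{j^*})}(t).
\]
Because $ U $ is piecewise constant on each cell, $ U(t^*) = \int_0^T \widetilde{\delta}\, U\, \mathrm{d}t $, and since the point value of $ Q_\tau u $ at $ t^* $ is its cell average, we obtain the decomposition
\[
\bigl( u(t^*) - U(t^*), \chi \bigr)_X = \bigl( u(t^*) - (Q_\tau u)(t^*), \chi \bigr)_X + \int_0^T \bigl( u - U, \widetilde{\delta}\chi \bigr)_X \, \mathrm{d}t.
\]
The first (local) term only involves the smoothness of $ u $ on one mesh cell. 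The spectral representation \cref{eq:u} yields, exactly as in the derivation of \cref{eq:u'-l2-growth}, a growth bound of the form $ \nm{u'(t)}_X \leqslant C_{\alpha,\nu}\, t^{\alpha\nu - 1}\, \nm{u_0}_{D(A^\nu)} $, so a direct integration on $ (t_{j^*-1}, t_{j^*}) $ combined with $ t_j - t_{j-1} \lesssim (j/J)^{\sigma-1} T/J $ bounds this first term by $ C_{\alpha,\sigma,\nu,T}\, J^{-\min\{\sigma\nu\alpha, 1\}}\, \nm{u_0}_{D(A^\nu)} $, uniformly in $ j^* $.

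For the integral term I set up a duality argument. Let $ G $ be the regularized Green function solving the backward adjoint fractional problem
\[
\D_{T-}^\alpha G(t) + A\, G(t) = \widetilde{\delta}(t)\, \chi, \qquad 0 < t < T,
\]
together with the natural endpoint condition at $ t = T $ that makes the fractional integration-by-parts identity $ \int_0^T ( \D_{0+}^\alpha v, w )_X\, \mathrm{d}t = \int_0^T ( v, \D_{T-}^\alpha w )_X\, \mathrm{d}t $ applicable to $ v = u - U $. Subtracting the discrete equation \cref{eq:U-diffu} from its continuous counterpart \cref{eq:u-weak-diffu} yields the Galerkin orthogonality
\[
\int_0^T \bigl( (\D_{0+}^\alpha + A)(u-U), V \bigr)_X \, \mathrm{d}t = 0 \qquad \forall V \in W_\tau,
\]
so that $ G $ may be replaced by $ G - Q_\tau G $, giving
\[
\int_0^T \bigl( u - U, \widetilde{\delta}\chi \bigr)_X \, \mathrm{d}t = \int_0^T \bigl( u - U, (\D_{T-}^\alpha + A)(G - Q_\tau G) \bigr)_X \, \mathrm{d}t.
\]
The factor $ u - U $ is then controlled cellwise by the approximation property of $ Q_\tau $ combined with the growth of $ u' $ near $ t = 0 $, while $ G - Q_\tau G $ is controlled by the weighted regularity of $ G $ announced in \cref{lem:wg,lem:G2}. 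Summing the cellwise contributions and balancing the powers of $ (j/J) $ yields the same rate $ J^{-\min\{\sigma\nu\alpha, 1\}} $; taking the supremum over unit $ \chi $ and over $ t^* \in (0, T] $ then produces \cref{eq:conv-diffu-inf}.

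The main obstacle, where the full strength of the graded grid enters, is precisely the weighted estimate of the regularized Green function $ G $. The forcing $ \widetilde{\delta}\chi $ has $ L^\infty $-norm of order $ 1/(t_{j^*} - t_{j^*-1}) $, which is large when $ t^* $ is close to $ 0 $, and the graded mesh of parameter $ \sigma $ is tuned to the singularity of $ u $ at $ t = 0 $, not to the singularity of $ G $ near $ t^* $. The weighted norms in \cref{lem:wg,lem:G2} must therefore be sharp enough to absorb this mismatch and to trade factors of $ J^{-\sigma\nu\alpha} $ against factors of $ J^{-1} $ when bounding $ G - Q_\tau G $ against the graded-mesh weights attached to $ u - U $. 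A secondary technical point is the disappearance of all boundary contributions from the fractional integration by parts: these should cancel because the discrete-minus-continuous residual essentially vanishes at $ t = 0 $ (the initial datum $ u_0 $ being absorbed into the forcing $ \D_{0+}^\alpha u_0 $) and $ G $ satisfies the correct vanishing condition at $ t = T $, but this needs to be verified carefully within the distributional definition of the Riemann--Liouville derivatives used in \cref{eq:U-diffu}.
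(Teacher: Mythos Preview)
Your proposal has a genuine structural gap in the duality step. In the paper the regularized Green function is \emph{discrete}: $G_\lambda^m \in \mathcal W_\tau$ is defined by \cref{eq:def-G-diffu} and lives in the test space. Because of this, the Galerkin orthogonality \cref{eq:def-Pi-diffu} can be applied \emph{to the Green function itself}, which is exactly what replaces the unknown $\Pi_\tau^\lambda y$ by the known $y$ in the proof of \cref{lem:love}; after one more integration by parts and the observation that $\lambda G_\lambda^m$ is orthogonal to $(I-Q_\tau)y$, one arrives at $\langle (I-Q_\tau)y,\,(I-Q_\tau)\D_{t_m-}^\alpha G_\lambda^m\rangle$, involving only data. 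By contrast, your $G$ solves a continuous backward problem; when you use Galerkin orthogonality to replace $G$ by $G-Q_\tau G$, the pairing still contains $u-U$ (or equivalently $(\D_{0+}^\alpha+A)(u-U)$), so the argument is circular: you are bounding $\nm{u-U}_{L^\infty}$ in terms of itself, with no smallness to absorb. Moreover, the elliptic part does not disappear in your formulation---you are left with $A(G-Q_\tau G)$, for which no estimate is available---whereas in the paper the spectral decomposition reduces $A$ to scalar multiplication by $\lambda$, and the resulting term $\lambda G_\lambda^m$ is killed by $(I-Q_\tau)$ because $G_\lambda^m$ is piecewise constant.

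A second, related issue is that \cref{lem:wg} provides a weighted $L^1$ bound on $(I-Q_\tau)\D_{t_m-}^\alpha G_\lambda^m$ for the \emph{discrete} object, proved via the explicit algebraic identities \cref{eq:G-2,eq:G-3} coming from \cref{eq:def-G-diffu}; it says nothing about $G-Q_\tau G$ for a continuous $G$. Your citation of \cref{lem:G2} is also off target: that lemma belongs to \cref{sec:wave} and concerns the auxiliary function $\mathcal G^m$ used in the fractional-wave analysis, not the diffusion case. To repair the argument you should first pass to eigenmodes, so that the problem becomes scalar with parameter $\lambda_n$, and then take the Green function in $\mathcal W_\tau$ as in \cref{eq:def-G-diffu}; the rest of your outline (local term via $Q_\tau$, weighted summation, supremum over $t^*$) then goes through exactly as in \cref{lem:err-Pi}.
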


The main task of the rest of this section is to prove \cref{thm:conv-diffu}. To
this end, we proceed as follows. Assume that $ \lambda > 0 $. For any $ y \in
{}_0H^{\alpha/2}(0,T) $, define $ \Pi_\tau^\lambda y \in \mathcal W_\tau $ by
that
\begin{equation}
  \label{eq:def-Pi-diffu}
  \Dual{
    \big( \D_{0+}^\alpha+\lambda \big)
    \big( y-\Pi_\tau^\lambda y \big), w
  }_{{}^0H^{\alpha/2}(0,T)} = 0
  \quad\forall w \in \mathcal W_\tau.
\end{equation}
For each $ 1 \leqslant m \leqslant J $, define $ G_\lambda^m \in \mathcal W_\tau
$ by that $ G_\lambda^m|_{(t_m,T)} = 0 $ and  
\begin{equation}
  \label{eq:def-G-diffu}
  \Dual{w, (\D_{t_m-}^\alpha + \lambda) G_\lambda^m}_{(0,t_m)} =
  \frac1{t_m-t_{m-1}} \int_{t_{m-1}}^{t_m} w
\end{equation}
for all $ w \in \mathcal W_\tau $. In addition, let $ G_{\lambda,m+1}^m := 0 $
and, for each $ 1 \leqslant j \leqslant m $, let
\[
  G_{\lambda,j}^m := \lim_{t \to {t_j-}} G_\lambda^m(t).
\]

\begin{remark}
The $G_{\lambda}^m$ can be viewed as a regularized Green function with respect to the operator $\D_{t_m-}^\alpha+\lambda$.
\end{remark}

\begin{lemma}
  \label{lem:G}
  For each $ 1 \leqslant m \leqslant J $,
  \begin{align}
    & G_{\lambda,m}^m > G_{\lambda,m-1}^m >
    \ldots > G_{\lambda,1}^m > 0, \label{eq:G-1} \\
    & G_{\lambda,m}^m = \frac1{
      (t_m-t_{m-1})^{1-\alpha}/\Gamma(2-\alpha) +
      \lambda(t_m-t_{m-1})
    }, \label{eq:G-2} \\
    & G_{\lambda,m}^m = \sum_{j=1}^{m-1}
    (G_{\lambda,j+1}^m - G_{\lambda,j}^m)
    \frac{
      t_j^{1-\alpha} - (t_j-t_1)^{1-\alpha}+\lambda\Gamma(2-\alpha) t_1
    }{
      t_m^{1-\alpha} - (t_m-t_1)^{1-\alpha}+\lambda \Gamma(2-\alpha) t_1
    }. \label{eq:G-3}
  \end{align}
\end{lemma}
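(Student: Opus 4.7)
The plan is to convert the weak identity (\ref{eq:def-G-diffu}) into an explicit finite linear system for the constants $G_{\lambda,j}^m$ by testing against step functions and then evaluating the Riemann-Liouville integrals in closed form. Taking $w = \chi_{(t_{j-1},t_j)}$ for each $1 \leqslant j \leqslant m$ and using $\D_{t_m-}^\alpha = -\D \circ \D_{t_m-}^{\alpha-1}$ together with $G_\lambda^m \equiv 0$ on $(t_m,T)$, the fundamental theorem of calculus on $(t_{j-1}, t_j)$ yields
\[
I_{j-1} - I_j + \lambda\, G_{\lambda,j}^m (t_j - t_{j-1}) = \delta_{jm},
\qquad 1 \leqslant j \leqslant m,
\]
where $I_j := (\D_{t_m-}^{\alpha-1} G_\lambda^m)(t_j)$ and $I_m = 0$. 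Because $G_\lambda^m$ is piecewise constant, the convolution evaluates in closed form as
\[
I_j = \frac{1}{\Gamma(2-\alpha)} \sum_{k=j+1}^{m} G_{\lambda,k}^m \left[ (t_k - t_j)^{1-\alpha} - (t_{k-1} - t_j)^{1-\alpha} \right],
\]
turning (\ref{eq:def-G-diffu}) into an upper-triangular linear system solvable by back-substitution.

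Two of the three conclusions follow directly. The $j = m$ equation decouples completely and immediately yields (\ref{eq:G-2}). For (\ref{eq:G-3}), I would use the $j = 1$ equation: introducing $b_k := t_k^{1-\alpha} - (t_k - t_1)^{1-\alpha}$ (so $b_1 = t_1^{1-\alpha}$) and the shifted sequence $\tilde b_k := b_k + \lambda \Gamma(2-\alpha) t_1$ with the convention $\tilde b_0 := 0$, this equation rearranges to $\sum_{k=1}^m G_{\lambda,k}^m (\tilde b_k - \tilde b_{k-1}) = 0$. Abel summation then delivers
\[
G_{\lambda,m}^m \tilde b_m = \sum_{k=1}^{m-1}(G_{\lambda,k+1}^m - G_{\lambda,k}^m)\, \tilde b_k,
\]
which is (\ref{eq:G-3}) after division by $\tilde b_m$.

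For (\ref{eq:G-1}), positivity is handled by downward induction on $j$. The equation for $j < m$ rearranges to $M_{jj}\, G_{\lambda,j}^m = \sum_{k>j} (-M_{jk})\, G_{\lambda,k}^m$, where $M_{jj} > 0$ is explicit and each $-M_{jk}$ is strictly positive because the concavity of $x \mapsto x^{1-\alpha}$ on $[0,\infty)$ makes the function $s \mapsto (s+h)^{1-\alpha} - s^{1-\alpha}$ strictly decreasing; combined with $G_{\lambda,m}^m > 0$ from (\ref{eq:G-2}), this yields $G_{\lambda,j}^m > 0$ for all $j$. The main obstacle is the strict monotonicity $G_{\lambda,j}^m < G_{\lambda,j+1}^m$, since naive comparison of adjacent equations does not close directly. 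I would address it via the time-reversal $\tau := t_m - t$, under which $\widetilde G(\tau) := G_\lambda^m(t_m - \tau)$ satisfies an L1-type forward discretization
\[
\int_{\tilde t_{j-1}}^{\tilde t_j} (\D_{0+}^\alpha + \lambda)\, \widetilde G \,\mathrm{d}\tau = \delta_{j,1}
\]
on the reversed grid $\tilde t_j := t_m - t_{m-j}$, corresponding to a forward subdiffusion equation with an impulsive source concentrated in the first subinterval. A discrete fractional maximum principle for this forward scheme (the analogue of the monotone decay of the continuous Mittag-Leffler Green kernel) then yields $\widetilde G_j$ strictly decreasing in $j$, which reverses to the chain (\ref{eq:G-1}).
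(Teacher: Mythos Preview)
Your derivation of the linear system and of \cref{eq:G-2} and \cref{eq:G-3} is correct and essentially matches the paper. Your direct positivity argument via the M-matrix structure is also valid, although the paper instead deduces $G_{\lambda,1}^m>0$ \emph{after} establishing monotonicity, by combining the chain $G_{\lambda,j+1}^m>G_{\lambda,j}^m$ with the telescoping identity \cref{eq:G-3}.

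The genuine gap is the monotonicity step. Your time-reversal is correct but does no work: the reversed scheme on the grid $\tilde t_j=t_m-t_{m-j}$ has exactly the same algebraic structure as the original, so the ``discrete fractional maximum principle'' you invoke is literally equivalent to the statement $G_{\lambda,j+1}^m>G_{\lambda,j}^m$ you are trying to prove. No off-the-shelf comparison principle for L1-type schemes on general (graded) grids delivers strict monotone decay of the discrete fundamental solution without first establishing the underlying algebraic inequality. The paper supplies exactly this: from the system one extracts, for each $1\leqslant k<m$,
\[
\sum_{j=k}^{m-1}(G_{\lambda,j+1}^m-G_{\lambda,j}^m)\,p_k(t_j)=G_{\lambda,m}^m\,p_k(t_m),
\qquad p_k(s):=(s-t_{k-1})^{1-\alpha}-(s-t_k)^{1-\alpha}+\mu(t_k-t_{k-1}),
\]
with $\mu=\lambda\Gamma(2-\alpha)$. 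The case $k=m-1$ gives $G_{\lambda,m}^m>G_{\lambda,m-1}^m$. For the inductive step $k\to k-1$, one multiplies the level-$k$ identity by $p_{k-1}(t_m)/p_k(t_m)$ and compares with the level-$(k{-}1)$ identity; this succeeds because of the strict ratio inequality $p_{k-1}(c)/p_k(c)<p_{k-1}(d)/p_k(d)$ for $t_{k-1}<c<d$, which is \cref{lem:pre-G}. That lemma---a non-obvious consequence of the concavity of $x\mapsto x^{1-\alpha}$, requiring a separate mean-value argument---is the key technical ingredient your proposal is missing.
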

\begin{proof}
  Let us first prove that
  \begin{equation}
    \label{eq:Gj}
    G_{\lambda,j+1}^m > G_{\lambda,j}^m
    \quad\text{ for all } 1 \leqslant j < m.
  \end{equation}
  For any $ 1 \leqslant k < m $, by \cref{eq:def-G-diffu} we obtain
  \begin{small}
  \[
    \sum_{j=k}^m (G_{\lambda,j}^m-G_{\lambda,j+1}^m) \big(
      (t_j-t_{k-1})^{1-\alpha} -
      (t_j-t_k)^{1-\alpha}
    \big) + \mu (t_k-t_{k-1}) G_{\lambda,k}^m = 0,
  \]
  \end{small}
  where $ \mu := \lambda \Gamma(2-\alpha) $, so that a simple algebraic
  computation yields
  \begin{small}
  \begin{equation}
    \label{eq:ava}
    \begin{aligned}
      & \sum_{j=k}^{m-1} (G_{\lambda,j+1}^m - G_{\lambda,j}^m) \big(
        (t_j-t_{k-1})^{1-\alpha} -
        (t_j-t_k)^{1-\alpha} + \mu(t_k-t_{k-1})
      \big) \\
      ={} &
      G_{\lambda,m}^m \big(
        (t_m-t_{k-1})^{1-\alpha} -
        (t_m-t_k)^{1-\alpha} + \mu (t_k-t_{k-1})
      \big).
    \end{aligned}
  \end{equation}
  \end{small}
  Inserting $ k=m-1 $ into the above equation and noting the fact $
  G_{\lambda,m}^m > 0 $ indicate $ G_{\lambda,m}^m>G_{\lambda,m-1}^m $. Assume that
  $ G_{\lambda,j+1}^m>G_{\lambda,j}^m $ for all $ k \leqslant j < m $, where $ 2
  \leqslant k < m $. Multiplying both sides of \cref{eq:ava} by
  \begin{small}
  \[
    \frac{
      (t_m-t_{k-2})^{1-\alpha} - (t_m-t_{k-1})^{1-\alpha} +
      \mu(t_{k-1}-t_{k-2})
    }{
      (t_m-t_{k-1})^{1-\alpha} - (t_m-t_{k})^{1-\alpha} +
      \mu(t_{k}-t_{k-1})
    },
  \]
  \end{small}
  from \cref{lem:pre-G} we obtain
  \begin{small}
  \begin{align*}
    & \sum_{j=k}^{m-1} (G_{\lambda,j+1}^m - G_{\lambda,j}^m) \big(
      (t_j-t_{k-2})^{1-\alpha} -
      (t_j-t_{k-1})^{1-\alpha} + \mu(t_{k-1}-t_{k-2})
    \big) \\
    <{} &
    G_{\lambda,m}^m \big(
      (t_m-t_{k-2})^{1-\alpha} -
      (t_m-t_{k-1})^{1-\alpha} + \mu (t_{k-1}-t_{k-2})
    \big).
  \end{align*}
  \end{small}
  Similarly to \cref{eq:ava}, we have
  \begin{small}
  \begin{align*}
    & \sum_{j=k-1}^{m-1} (G_{\lambda,j+1}^m - G_{\lambda,j}^m) \big(
      (t_j-t_{k-2})^{1-\alpha} -
      (t_j-t_k-1)^{1-\alpha} + \mu(t_{k-1}-t_{k-2})
    \big) \\
    ={} &
    G_{\lambda,m}^m \big(
      (t_m-t_{k-2})^{1-\alpha} -
      (t_m-t_{k-1})^{1-\alpha} + \mu (t_{k-1}-t_{k-2})
    \big).
  \end{align*}
  \end{small}
  Combining the above two equations yields $ G_{\lambda,k}^m>G_{\lambda,k-1}^m
  $. Therefore, \cref{eq:Gj} is proved by induction.

  Next, inserting $ k=1 $ into \cref{eq:ava} yields
  \begin{small}
  \begin{equation}
    \label{eq:Gjm}
    \begin{aligned}
      & \sum_{j=1}^{m-1} (G_{\lambda,j+1}^m - G_{\lambda,j}^m) \big(
        t_j^{1-\alpha} -
        (t_j-t_1)^{1-\alpha} + \mu t_1
      \big) \\
      ={} &
      G_{\lambda,m}^m \big(
        t_m^{1-\alpha} - (t_m-t_1)^{1-\alpha} + \mu t_1
      \big).
    \end{aligned}
  \end{equation}
  \end{small}
  Since
  \begin{small}
  \[
    t_j^{1-\alpha} - (t_j-t_1)^{1-\alpha} + \mu t_1 >
    t_m^{1-\alpha} - (t_m-t_1)^{1-\alpha} + \mu t_1
    \quad\forall 1 \leqslant j \leqslant m-1,
  \]
  \end{small}
  from \cref{eq:Gj,eq:Gjm} it follows that
  \[
    \sum_{j=1}^{m-1} \left(
      G_{\lambda,j+1}^m - G_{\lambda,j}^m
    \right) < G_{\lambda,m}^m.
  \]
  This implies $ G_{\lambda,1}^m > 0 $ and hence proves \cref{eq:G-1} by
  \cref{eq:Gj}.

  Finally, \cref{eq:G-2} is evident by \cref{eq:def-G-diffu}, and dividing both
  sides of \cref{eq:Gjm} by $ t_m^{1-\alpha}-(t_m-t_1)^{1-\alpha}+\mu t_1 $
  proves \cref{eq:G-3}. This completes the proof.
\end{proof}

\begin{lemma} 
  For each $ 1 \leqslant k \leqslant J $,
  \begin{small}
  \begin{align} 
    & \sum_{j=1}^k j^{(\sigma-1)(\alpha-1)}
    \nm{(I-Q_\tau)(t_k-t)^{-\alpha}}_{L^1(t_{j-1},t_j)}
    \leqslant C_{\alpha,\sigma,T} J^{\sigma(\alpha-1)},
    \label{eq:74} \\
    & \sum_{j=1}^k j^{-\sigma-\alpha+1}
    \nm{(I-Q_\tau)(t_k-t)^{-\alpha}}_{L^1(t_{j-1},t_j)}
    \leqslant C_{\alpha,\sigma,T}
    J^{\sigma(\alpha-1)} k^{-\sigma\alpha}.
    \label{eq:73}
  \end{align}
  \end{small}
\end{lemma}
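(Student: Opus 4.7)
The plan is to split the sum at $j=k$, since the function $f_k(t) := (t_k-t)^{-\alpha}$ is smooth on each $I_j := (t_{j-1},t_j)$ with $j<k$ but only weakly singular at the right endpoint of $I_k$. On $I_k$ I would use the crude bound $\nm{(I-Q_\tau)f_k}_{L^1(I_k)} \leqslant 2\nm{f_k}_{L^1(I_k)} = \frac{2}{1-\alpha}(t_k-t_{k-1})^{1-\alpha}$. On each $I_j$ with $j<k$, a one-dimensional Poincar\'e-type inequality gives $\nm{(I-Q_\tau)f_k}_{L^1(I_j)} \leqslant |I_j| \, \nm{f_k'}_{L^1(I_j)}$, and since $f_k'$ is of constant sign on $I_j$ we get $\nm{f_k'}_{L^1(I_j)} = (t_k-t_j)^{-\alpha}-(t_k-t_{j-1})^{-\alpha} \leqslant C_\alpha |I_j|(t_k-t_j)^{-\alpha-1}$ by the mean value theorem.

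Inserting the explicit grid formulas $|I_j| \leqslant C_{\sigma,T}\, j^{\sigma-1}/J^\sigma$ and $t_k-t_j = T(k^\sigma-j^\sigma)/J^\sigma$, each term with $j<k$ becomes $C_{\alpha,\sigma,T}\, J^{\sigma(\alpha-1)} j^{2(\sigma-1)}(k^\sigma-j^\sigma)^{-\alpha-1}$, which already exposes the desired prefactor $J^{\sigma(\alpha-1)}$. Both inequalities then reduce to estimating a sum of the form $\Sigma_\beta := \sum_{j=1}^{k-1} j^\beta (k^\sigma-j^\sigma)^{-\alpha-1}$ with $\beta$ chosen according to the outer weight. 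The boundary term $j=k$ contributes $k^{(\sigma-1)(\alpha-1)}(t_k-t_{k-1})^{1-\alpha} \leqslant C J^{\sigma(\alpha-1)}$ for \cref{eq:74} and $k^{-\sigma\alpha}J^{\sigma(\alpha-1)}$ for \cref{eq:73}, which already matches the target right-hand sides.

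For \cref{eq:74} we have $\beta = (\sigma-1)(\alpha+1)$, and the convexity inequality $k^\sigma-j^\sigma \geqslant \sigma j^{\sigma-1}(k-j)$ (valid for $\sigma\geqslant 1$) cancels the power of $j$ exactly and leaves $\Sigma_\beta \leqslant C_\sigma \sum_{i=1}^{k-1} i^{-\alpha-1} \leqslant C_\alpha$, since $\alpha+1>1$. For \cref{eq:73} we have $\beta = \sigma-3-\alpha$ and I would split the sum at $j=k/2$. On $1\leqslant j\leqslant k/2$ use $k^\sigma-j^\sigma \geqslant (1-2^{-\sigma})k^\sigma$ to pull out a factor $k^{-\sigma(\alpha+1)}$, leaving $\sum_{j\leqslant k/2} j^\beta$ which is at most $C k^{\max\{\beta+1,\,0\}}$ (up to a logarithm in the borderline case). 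On $k/2<j<k$ invoke the convexity estimate again to bound the summand by $C k^{\beta-(\sigma-1)(\alpha+1)}(k-j)^{-\alpha-1}$, whose sum is of order $k^{-2-\sigma\alpha}$. An exponent check then confirms that both contributions are at most $C k^{-\sigma\alpha}$.

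The main obstacle will be the case analysis for \cref{eq:73}: depending on the sign of $\beta+1 = \sigma-2-\alpha$, the partial sum $\sum_{j\leqslant k/2}j^\beta$ behaves very differently (convergent, logarithmic, or power-growing), and one must check in each regime that the residual power of $k$ combines with $k^{-\sigma(\alpha+1)}$ to produce at most $k^{-\sigma\alpha}$. There does not seem to be a uniform treatment, but splitting at $j=k/2$ cleanly isolates the interaction between the singularity of $(k^\sigma-j^\sigma)^{-\alpha-1}$ near $j=k$ and the weight $j^\beta$ near $j=0$.
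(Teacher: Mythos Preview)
Your approach coincides with the paper's: split off the boundary term $j=k$, bound $\nm{(I-Q_\tau)(t_k-t)^{-\alpha}}_{L^1(I_j)}$ for $j<k$ by $C_\alpha\,|I_j|\bigl((t_k-t_j)^{-\alpha}-(t_k-t_{j-1})^{-\alpha}\bigr)$, insert the graded-grid formulas, and reduce to controlling $\sum_{j=1}^{k-1} j^\beta(k^\sigma-j^\sigma)^{-\alpha-1}$. The only difference is that the paper packages this last sum into a separate lemma (\cref{lem:discr-conv1}), which yields $C\,k^{\beta-(\sigma-1)(\alpha+1)}$ for $\beta>-1$ via an integral comparison, while you split at $j=k/2$ and use convexity by hand; both routes give the same output.

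One arithmetic slip worth noting: for \cref{eq:73} the exponent is
\[
\beta \;=\; (-\sigma-\alpha+1)+2(\sigma-1)\;=\;\sigma-\alpha-1,
\]
not $\sigma-3-\alpha$. Since $\sigma\geqslant 1$ and $0<\alpha<1$, one always has $\beta+1=\sigma-\alpha>0$ (and $\beta>-1$, so \cref{lem:discr-conv1} applies directly). Hence the case analysis you flag as the ``main obstacle'' never actually arises: $\sum_{j\leqslant k/2} j^\beta \leqslant C\,k^{\beta+1}$ unconditionally, and both halves of your split give the required $k^{-\sigma\alpha}$.
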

\begin{proof} 
  A straightforward calculation gives
  \begin{align*}
    & k^{(\sigma-1)(\alpha-1)}
    \nm{(I-Q_\tau)(t_k-t)^{-\alpha}}_{L^1(t_{k-1},t_k)} \\
    \leqslant{} &
    C_{\alpha} k^{(\sigma-1)(\alpha-1)}
    (t_k-t_{k-1})^{1-\alpha} \\
    \leqslant{} &
    C_{\alpha,\sigma,T} J^{-\sigma(1-\alpha)}
  \end{align*}
  and
  \begin{small}
  \begin{align*}
    & \sum_{j=1}^{k-1} j^{(\sigma-1)(\alpha-1)}
    \nm{(I-Q_\tau)(t_k-t)^{-\alpha}}_{L^1(t_{j-1},t_j)} \\
    \leqslant{} &
    C_{\alpha} \sum_{j=1}^{k-1}
    j^{(\sigma-1)(\alpha-1)} (t_j-t_{j-1})
    \big( (t_k-t_j)^{-\alpha}-(t_k-t_{j-1})^{-\alpha} \big) \\
    \leqslant{} &
    C_{\alpha,\sigma,T} J^{-\sigma(1-\alpha)}
    \sum_{j=1}^{k-1} j^{(\sigma-1)(\alpha-1)}
    \big( j^\sigma-(j-1)^\sigma \big) \big(
      (k^\sigma-j^\sigma)^{-\alpha} -
      (k^\sigma-(j-1)^\sigma)^{-\alpha}
    \big) \\
    \leqslant{} &
    C_{\alpha,\sigma,T} J^{-\sigma(1-\alpha)}
    \sum_{j=1}^{k-1} j^{(\sigma-1)(\alpha-1)}
    j^{2(\sigma-1)}(k^\sigma-j^\sigma)^{-\alpha-1} \\
    ={} &
    C_{\alpha,\sigma,T} J^{-\sigma(1-\alpha)}
    \sum_{j=1}^{k-1} j^{(\sigma-1)(\alpha+1)}
    (k^\sigma - j^\sigma)^{-\alpha-1} \\
    \leqslant{} &
    C_{\alpha,\sigma,T} J^{-\sigma(1-\alpha)}
    \quad\text{(by \cref{lem:discr-conv1}).}
  \end{align*}
  \end{small}
  Combining the above two estimates proves \cref{eq:74}. Similarly, a simple
  calculation gives
  \begin{align*} 
    & k^{-\sigma-\alpha+1}
    \nm{(I-Q_\tau)(t_k-t)^{-\alpha}}_{L^1(t_{k-1},t_k)} \\
    \leqslant{} &
    C_\alpha k^{-\sigma-\alpha+1} (t_k-t_{k-1})^{1-\alpha} \\
    \leqslant{} &
    C_{\alpha,\sigma,T} J^{-\sigma(1-\alpha)} k^{-\sigma\alpha}
  \end{align*}
  and
  \begin{small}
  \begin{align*} 
    & \sum_{j=1}^{k-1} j^{-\sigma-\alpha+1}
    \nm{(I-Q_\tau)(t_k-t)^{-\alpha}}_{L^1(t_{j-1},t_j)} \\
    \leqslant{} &
    C_{\alpha} \sum_{j=1}^{k-1}
    j^{-\sigma-\alpha+1}(t_j-t_{j-1})
    \big(
      (t_k-t_j)^{-\alpha} - (t_k-t_{j-1})^{-\alpha}
    \big) \\
    \leqslant{} &
    C_{\alpha,\sigma,T} J^{-\sigma(1-\alpha)}
    \sum_{j=1}^{k-1} j^{-\sigma-\alpha+1}
    \big(j^\sigma-(j-1)^\sigma\big) \big(
      (k^\sigma - j^\sigma)^{-\alpha} -
      (k^\sigma - (j-1)^\sigma)^{-\alpha}
    \big) \\
    \leqslant{} &
    C_{\alpha,\sigma,T} J^{-\sigma(1-\alpha)}
    \sum_{j=1}^{k-1} j^{-\sigma-\alpha+1}
    j^{2(\sigma-1)} (k^\sigma - j^\sigma)^{-\alpha-1} \\
    ={} &
    C_{\alpha,\sigma,T} J^{-\sigma(1-\alpha)}
    \sum_{j=1}^{k-1} j^{\sigma-\alpha-1}
    (k^\sigma - j^\sigma)^{-\alpha-1} \\
    \leqslant{} &
    C_{\alpha,\sigma,T} J^{-\sigma(1-\alpha)} k^{-\sigma\alpha}
    \quad\text{(by \cref{lem:discr-conv1}).}
  \end{align*}
  \end{small}
  Combining the above two estimates proves \cref{eq:73} and thus concludes the
  proof.
\end{proof}

\begin{lemma} 
		 \label{lem:wg}
  For each $ 1 \leqslant m \leqslant J $,
  \begin{equation}
    \label{eq:weigh-G}
    \sum_{j=1}^m  \left( \frac mj \right)^{(\sigma-1)(1-\alpha)}
    \nm{(I-Q_\tau)\D_{t_m-}^\alpha G_\lambda^m}_{L^1(t_{j-1},t_j)} \leqslant
    C_{\alpha,\sigma,T}.
  \end{equation}
\end{lemma}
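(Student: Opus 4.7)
The plan is to exploit the piecewise-constant structure of $G_\lambda^m$ to expand $\D_{t_m-}^\alpha G_\lambda^m$ on each subinterval as an explicit superposition of the elementary kernels $(t_k-t)^{-\alpha}$, then invoke \cref{eq:74} after an Abel summation and an exchange of the order of summation. Writing $G_\lambda^m = \sum_{k=1}^m g_k\,\chi_{(t_{k-1},t_k)}$ with $g_k := G_{\lambda,k}^m$, a direct integration gives, for $t \in (t_{j-1},t_j)$ with $j \leqslant m$,
\[
  \D_{t_m-}^\alpha G_\lambda^m(t) = \frac{1}{\Gamma(1-\alpha)} \Big[ g_j(t_j-t)^{-\alpha} + \sum_{k=j+1}^m g_k \big( (t_k-t)^{-\alpha} - (t_{k-1}-t)^{-\alpha} \big) \Big].
\]
Applying summation by parts rewrites this in the telescoping form
\[
  \D_{t_m-}^\alpha G_\lambda^m(t) = \frac{1}{\Gamma(1-\alpha)} \Big[ g_m(t_m-t)^{-\alpha} - \sum_{k=j}^{m-1} (g_{k+1}-g_k)(t_k-t)^{-\alpha} \Big],
\]
in which every coefficient $g_{k+1}-g_k$ is strictly positive by \cref{eq:G-1}.

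Taking absolute values termwise and integrating over $(t_{j-1},t_j)$ yields, by the triangle inequality,
\[
  \nm{(I-Q_\tau) \D_{t_m-}^\alpha G_\lambda^m}_{L^1(t_{j-1},t_j)} \leqslant \frac{g_m\, a_{m,j} + \sum_{k=j}^{m-1}(g_{k+1}-g_k)\, a_{k,j}}{\Gamma(1-\alpha)},
\]
where $a_{k,j} := \nm{(I-Q_\tau)(t_k-t)^{-\alpha}}_{L^1(t_{j-1},t_j)}$. I would next multiply by $(m/j)^{(\sigma-1)(1-\alpha)}$, sum over $j = 1, \ldots, m$, and swap the order of summation so that $k$ becomes the outer index, with $j$ running from $1$ to $k$. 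Since $(m/j)^{(\sigma-1)(1-\alpha)} = m^{(\sigma-1)(1-\alpha)} j^{(\sigma-1)(\alpha-1)}$, the key ingredient \cref{eq:74} gives, uniformly in $k \leqslant m$,
\[
  \sum_{j=1}^k (m/j)^{(\sigma-1)(1-\alpha)} a_{k,j} \leqslant C_{\alpha,\sigma,T}\, m^{(\sigma-1)(1-\alpha)} J^{\sigma(\alpha-1)}.
\]
Combined with the telescoping identity $\sum_{k=1}^{m-1}(g_{k+1}-g_k) = g_m - g_1 \leqslant g_m$, this reduces the weighted sum under consideration to
\[
  \sum_{j=1}^m (m/j)^{(\sigma-1)(1-\alpha)} \nm{(I-Q_\tau) \D_{t_m-}^\alpha G_\lambda^m}_{L^1(t_{j-1},t_j)} \leqslant C_{\alpha,\sigma,T}\, g_m\, m^{(\sigma-1)(1-\alpha)} J^{\sigma(\alpha-1)}.
\]

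To close the argument I would use the closed-form identity \cref{eq:G-2} together with the elementary bound $t_m - t_{m-1} \geqslant c_\sigma\, T\, m^{\sigma-1} J^{-\sigma}$ (which follows from the mean-value theorem applied to $x \mapsto x^\sigma$ and is uniform for $1 \leqslant m \leqslant J$ and $\sigma \geqslant 1$) to deduce
\[
  g_m = G_{\lambda,m}^m \leqslant \frac{\Gamma(2-\alpha)}{(t_m-t_{m-1})^{1-\alpha}} \leqslant C_{\alpha,\sigma,T}\, J^{\sigma(1-\alpha)} m^{-(\sigma-1)(1-\alpha)}.
\]
The factors $m^{(\sigma-1)(1-\alpha)}$ and $J^{\sigma(\alpha-1)}$ cancel exactly against those produced by $g_m$, delivering the claimed constant bound. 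The main obstacle, to my mind, is the Abel-summation step: one must organize the expansion so that the monotonicity and positivity produced by \cref{lem:G} are available at the right place, because the triangle inequality is otherwise too lossy. Once the telescoping form is in hand, the rest is a bookkeeping exercise in which the graded-grid spacing $t_m - t_{m-1}$ is precisely the quantity that compensates for the $J^{\sigma(\alpha-1)}$ factor in \cref{eq:74}.
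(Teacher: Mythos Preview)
Your argument is correct and in fact more economical than the paper's. Both proofs start from the same expansion
\[
  \D_{t_m-}^\alpha G_\lambda^m(t)=\frac1{\Gamma(1-\alpha)}\sum_{k=j}^m(G_{\lambda,k}^m-G_{\lambda,k+1}^m)(t_k-t)^{-\alpha},\qquad t\in(t_{j-1},t_j),
\]
and swap the order of summation. The difference lies in the weight that is carried through the swap. The paper introduces a $\lambda$-dependent auxiliary weight
\[
  \eta_j^m=\frac{(J/j)^{\sigma\alpha}+\lambda}{(J/m)^{\sigma\alpha}+\lambda}\,j^{(\sigma-1)(\alpha-1)}J^{\sigma(1-\alpha)},
\]
which forces it to invoke \emph{both} \cref{eq:74} and \cref{eq:73} for the inner sum, and then the algebraic identity \cref{eq:G-3} to control $\sum_k\frac{(J/k)^{\sigma\alpha}+\lambda}{(J/m)^{\sigma\alpha}+\lambda}\,|G_{\lambda,k}^m-G_{\lambda,k+1}^m|$ by $C\,G_{\lambda,m}^m$; only afterwards does it observe that $\eta_j^m/G_{\lambda,m}^m\geqslant C\,(m/j)^{(\sigma-1)(1-\alpha)}$. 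You carry the target weight $(m/j)^{(\sigma-1)(1-\alpha)}$ directly, so that \cref{eq:74} alone handles the inner sum (its right-hand side being independent of $k$), and the outer sum collapses by the bare telescoping $\sum_{k=1}^{m}|G_{\lambda,k}^m-G_{\lambda,k+1}^m|\leqslant 2G_{\lambda,m}^m$. The closure via \cref{eq:G-2} and the grid estimate $t_m-t_{m-1}\geqslant c_\sigma T\,m^{\sigma-1}J^{-\sigma}$ is identical in spirit to the paper's final step. What your route buys is that \cref{eq:73} and \cref{eq:G-3} become unnecessary for this lemma; what the paper's route buys is an intermediate weighted bound with the sharper $\lambda$-aware weight $\eta_j^m$, though this stronger form is not used elsewhere.
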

\begin{proof} 
  For each $ 1 \leqslant j \leqslant m $, let
  \begin{equation}
    \label{eq:eta}
    \eta_j^m := \frac{
      \big( J/j \big)^{\sigma\alpha} + \lambda
    }{(J/m)^{\sigma\alpha}+\lambda} j^{(\sigma-1)(\alpha-1)} J^{\sigma(1-\alpha)}.
  \end{equation}
  Since
  \[
    (\D_{t_m-}^\alpha G_\lambda^m)(t) = \sum_{j=1}^m
    (G_{\lambda,j}^m-G_{\lambda,j+1}^m)
    \frac{(t_j-t)_{+}^{-\alpha}}{\Gamma(1-\alpha)},
  \]
  we have
  \begin{small}
  \begin{align*}
    & \sum_{j=1}^m \eta_j^m
    \nm{(I-Q_\tau)\D_{t_m-}^\alpha G_\lambda^m}_{L^1(t_{j-1},t_j)} \\
    \leqslant{} &
    \frac1{\Gamma(1-\alpha)} \sum_{j=1}^m \eta_j^m \sum_{k=j}^m
    \snm{G_{\lambda,k}^m-G_{\lambda,k+1}^m} \nm{(I-Q_\tau)(t_k-t)^{-\alpha}}_{L^1(t_{j-1},t_j)} \\
    ={} &
    \frac1{\Gamma(1-\alpha)} \sum_{k=1}^m
    \snm{G_{\lambda,k}^m-G_{\lambda,k+1}^m} \sum_{j=1}^k \eta_j^m
    \nm{(I-Q_\tau)(t_k-t)^{-\alpha}}_{L^1(t_{j-1},t_j)} \\
    \leqslant{} &
    C_{\alpha,\sigma,T} \sum_{k=1}^m \snm{
      G_{\lambda,k}^m - G_{\lambda,k+1}^m
    } \frac{J^{\sigma(1-\alpha)}}{(J/m)^{\sigma\alpha}+\lambda} \\
    & \qquad \times \sum_{j=1}^k
    \big( (J/j)^{\sigma\alpha} + \lambda \big)
    j^{(\sigma-1)(\alpha-1)}
    \nm{(I-Q_\tau)(t_k-t)^{-\alpha}}_{L^1(t_{j-1},t_j)} \\
    \leqslant{} &
    C_{\alpha,\sigma,T} \sum_{k=1}^m
    \frac{
      \big( J/k \big)^{\sigma\alpha} + \lambda
    }{(J/m)^{\sigma\alpha}+\lambda}
    \snm{G_{\lambda,k}^m-G_{\lambda,k+1}^m}
    \quad\text{(by \cref{eq:74,eq:73}).}
  \end{align*}
  \end{small}
  Therefore, from \cref{lem:G} and the inequality
  \[
    \frac{
      t_k^{1-\alpha} - (t_k-t_1)^{1-\alpha} + \lambda \Gamma(2-\alpha) t_1
    }{
      t_m^{1-\alpha} - (t_m-t_1)^{1-\alpha}+\lambda\Gamma(2-\alpha)t_1
    } \geqslant C_{\alpha,\sigma,T}
    \frac{
      \big( J/k \big)^{\sigma\alpha}+\lambda
    }{(J/m)^{\sigma\alpha}+\lambda},
  \]
  it follows that
  \[
    \sum_{j=1}^m \eta_j^m \nm{
      (I-Q_\tau)\D_{t_m-}^\alpha G_\lambda^m
    }_{L^1(t_{j-1},t_j)}
    \leqslant C_{\alpha,\sigma,T} G_{\lambda,m}^m.
  \]
  In addition, by \cref{eq:G-2,eq:eta}, it holds
  \begin{small}
  \begin{align*}
    \frac{\eta_j^m}{G_{\lambda,m}^m} & \geqslant
    C_{\alpha,\sigma,T} \frac{
      (J/j)^{\sigma\alpha}+\lambda
    }{(J/m)^{\sigma\alpha}+\lambda}
    j^{(\sigma-1)(\alpha-1)} J^{\sigma(1-\alpha)}
    \big(
      m^{(\sigma-1)(1-\alpha)}J^{\sigma(\alpha-1)} +
      \lambda m^{\sigma-1} J^{-\sigma}
    \big) \\
    & \geqslant
    C_{\alpha,\sigma,T} \frac{
      \big(J/j\big)^{\sigma\alpha}+\lambda
    }{(J/m)^{\sigma\alpha}+\lambda}
    j^{(\sigma-1)(\alpha-1)}
    m^{(\sigma-1)(1-\alpha)} \\
    & \geqslant
    C_{\alpha,\sigma,T}
    \big(m/j\big)^{(\sigma-1)(1-\alpha)}.
  \end{align*}
  \end{small}
  Consequently, combining the above two estimates proves \cref{eq:weigh-G} and
  thus concludes the proof.
\end{proof}
\begin{remark}
  $\D_{t_m-}^\alpha  G^m_{\lambda}$ is a non-smooth function in $L^1(0,T)$, but it is smoother   away from $t_m$. This is the starting point of \cref{lem:wg}.
\end{remark}

\begin{lemma} 
  \label{lem:love}
  If $ y \in {}_0H^{\alpha/2}(0,T) \cap C(0,T] $, then
  \begin{equation}
    \label{eq:love}
    \left(
      \Pi_\tau^\lambda y - Q_\tau y
    \right)({t_m-}) = \Dual{
      (I-Q_\tau)y, \, (I-Q_\tau)\D_{t_m-}^\alpha G_\lambda^m
    }_{(0,t_m)}
  \end{equation}
  for each $ 1 \leqslant m \leqslant J $. 
\end{lemma}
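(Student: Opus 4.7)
The plan is to exploit the duality between $\Pi_\tau^\lambda y$ and the regularized Green function $G_\lambda^m$ in three stages. First, since $w := \Pi_\tau^\lambda y - Q_\tau y$ lies in $\mathcal{W}_\tau$, it is constant on $(t_{m-1},t_m)$ with common value $w(t_m-)$; inserting this $w$ into the defining relation \eqref{eq:def-G-diffu} collapses the right-hand side of that identity and yields
\[
(\Pi_\tau^\lambda y - Q_\tau y)(t_m-) = \Dual{\Pi_\tau^\lambda y - Q_\tau y, (\D_{t_m-}^\alpha + \lambda) G_\lambda^m}_{(0,t_m)}.
\]
This recasts the left-hand side of \eqref{eq:love} as a duality pairing against $G_\lambda^m$, after which I split $\Pi_\tau^\lambda y - Q_\tau y = (\Pi_\tau^\lambda y - y) + (y - Q_\tau y)$ and analyze the two pieces separately.

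For the first piece $\Pi_\tau^\lambda y - y$, I aim to show it contributes zero. The $\lambda$-contribution is immediate from $G_\lambda^m|_{(t_m,T)} = 0$, which allows me to rewrite the $(0,t_m)$-pairing as the $(0,T)$-pairing. For the fractional contribution, I will establish the integration-by-parts identity
\[
\Dual{\D_{0+}^\alpha(y - \Pi_\tau^\lambda y), G_\lambda^m}_{{}^0H^{\alpha/2}(0,T)} = \Dual{y - \Pi_\tau^\lambda y, \D_{t_m-}^\alpha G_\lambda^m}_{(0,t_m)}
\]
by unfolding the left-hand side via the preliminaries (with $\beta = \alpha/2$, $\gamma = \alpha$, so $\gamma - \beta = \alpha/2$) as $\langle \D_{0+}^{\alpha/2}(y - \Pi_\tau^\lambda y), \D_{T-}^{\alpha/2} G_\lambda^m \rangle_{(0,T)}$, using $G_\lambda^m|_{(t_m,T)} = 0$ to replace $\D_{T-}^{\alpha/2} G_\lambda^m$ on $(0,T)$ by the truncated $\D_{t_m-}^{\alpha/2} G_\lambda^m$ on $(0,t_m)$, and then refolding by the symmetric half of the same definition on $(0,t_m)$ (noting $G_\lambda^m \in {}^0H^{\alpha/2}(0,t_m) = H^{\alpha/2}(0,t_m)$ because $\alpha/2 < 1/2$). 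Combined with the Galerkin orthogonality \eqref{eq:def-Pi-diffu} applied to the test function $G_\lambda^m \in \mathcal W_\tau$, this kills the first piece.

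For the remaining piece $y - Q_\tau y$, the $\lambda$-term vanishes by $L^2$-orthogonality of $(I - Q_\tau)y$ to the piecewise constant $G_\lambda^m$, and the fractional term is rewritten via $L^2$ self-adjointness and idempotence of $Q_\tau$ as
\[
\Dual{(I - Q_\tau)y, \D_{t_m-}^\alpha G_\lambda^m}_{(0,t_m)} = \Dual{(I - Q_\tau)y, (I - Q_\tau)\D_{t_m-}^\alpha G_\lambda^m}_{(0,t_m)},
\]
matching the right-hand side of \eqref{eq:love}. The main obstacle I expect is the integration-by-parts step at the crossover point $t = t_m$: I must confirm that $\D_{T-}^{\alpha/2-1} G_\lambda^m$ is continuous across $t_m$ and vanishes there (which follows because $G_\lambda^m$ is bounded and the Abel integral $\int_t^{t_m}(s-t)^{-\alpha/2}\,\mathrm{d}s$ over the shrinking interval tends to zero as $t \to t_m-$), so that a subsequent distributional derivative yields no Dirac mass and $\D_{T-}^{\alpha/2}G_\lambda^m$ on $(0,T)$ genuinely equals the truncation of $\D_{t_m-}^{\alpha/2}G_\lambda^m$. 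With this endpoint check settled, the unfold–refold chain closes cleanly and the three stages combine to give \eqref{eq:love}.
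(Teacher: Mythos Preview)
Your proposal is correct and follows essentially the same route as the paper's proof. The only organizational difference is that the paper keeps $\Pi_\tau^\lambda y - Q_\tau y$ intact, extends the pairing to $(0,T)$, applies \cref{lem:dual} to move $(\D_{T-}^\alpha+\lambda)$ across, invokes \cref{eq:def-Pi-diffu} to replace $\Pi_\tau^\lambda y - Q_\tau y$ by $(I-Q_\tau)y$ in one stroke, and then applies \cref{lem:dual} again to move the operator back---whereas you split $\Pi_\tau^\lambda y - Q_\tau y = (\Pi_\tau^\lambda y - y) + (y - Q_\tau y)$ first and handle the pieces separately, supplying an explicit endpoint continuity check at $t_m$ that the paper absorbs into its citation of \cref{lem:dual} and the fact $G_\lambda^m|_{(t_m,T)}=0$.
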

\begin{proof} 
  A straightforward calculation gives
  \begin{align*}
    & (\Pi_\tau^\lambda y - Q_\tau y)(t_m-) \\
    ={}& \Dual{
      \Pi_\tau^\lambda y - Q_\tau y, \,
      (\D_{t_m-}^\alpha+\lambda) G_\lambda^m
    }_{(0,t_m)} \qquad\qquad\text{(by \cref{eq:def-G-diffu})} \\
    ={}& \Dual{
      \Pi_\tau^\lambda y - Q_\tau y, \,
      (\D_{T-}^\alpha+\lambda) G_\lambda^m
    }_{(0,T)} \qquad\qquad\text{(by the fact $ G_\lambda^m|_{(t_m,T)}=0 $)} \\
    ={} &
    \Dual{
      (\D_{0+}^\alpha + \lambda)(\Pi_\tau^\lambda y - Q_\tau y), \,
      G_\lambda^m
    }_{{}^0H^{\alpha/2}(0,T)} \qquad\text{(by \cref{lem:dual})} \\
    ={} &
    \Dual{
      (\D_{0+}^\alpha + \lambda)(I-Q_\tau)y, \,
      G_\lambda^m
    }_{{}^0H^{\alpha/2}(0,T)} \qquad\text{(by \cref{eq:def-Pi-diffu})} \\
    ={} &
    \Dual{
      (I-Q_\tau)y, \,
      (\D_{T-}^\alpha + \lambda)G_\lambda^m
    }_{(0,T)} \qquad\qquad\text{(by \cref{lem:dual})} \\
    ={} &
    \Dual{
      (I-Q_\tau)y, \,
      (\D_{t_m-}^\alpha + \lambda)G_\lambda^m
    }_{(0,t_m)} \qquad\qquad
    \text{(by the fact $ G_\lambda^m|_{(t_m,T)} = 0 $).}
  \end{align*}
  Hence, \cref{eq:love} follows from the equality
  \[
    \Dual{
      (I-Q_\tau) y, (\D_{t_m-}^\alpha + \lambda)G_\lambda^m
    }_{(0,t_m)} = \Dual{
      (I-Q_\tau)y, (I-Q_\tau)\D_{t_m-}^\alpha G_\lambda^m
    }_{(0,t_m)},
  \]
  which is easily derived by the definition of $ Q_\tau $. This completes the
  proof.
\end{proof}


\begin{lemma} 
  \label{lem:err-Pi}
  Assume that 
  $ y \in {}_0H^{\alpha/2}(0,T) \cap C^1(0,T] $
  satisfies
  \begin{equation}
    \label{eq:y'-cond}
    \snm{y'(t)} \leqslant t^{-r},
    \quad 0 < t \leqslant T,
  \end{equation}
  where $ 0 < r < 1 $. Then
  \begin{equation}
    \label{eq:err-Pi-infty}
    \Nm{
      \big(I-\Pi_\tau^\lambda\big)y
    }_{L^\infty(0,T)} \leqslant
    C_{\alpha,\sigma,r,T} J^{-\min\{\sigma(1-r),1\}}.
  \end{equation}
\end{lemma}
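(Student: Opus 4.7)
The plan is to insert $Q_\tau y$ and apply the triangle inequality,
\[
\nm{(I - \Pi_\tau^\lambda) y}_{L^\infty(0,T)} \leqslant \nm{(I - Q_\tau) y}_{L^\infty(0,T)} + \nm{(\Pi_\tau^\lambda - Q_\tau) y}_{L^\infty(0,T)}.
\]
Since $\Pi_\tau^\lambda y - Q_\tau y \in \mathcal W_\tau$ is piecewise constant, the second $L^\infty$ norm equals $\max_{1 \leqslant m \leqslant J} \snm{(\Pi_\tau^\lambda y - Q_\tau y)(t_m-)}$, which is precisely the quantity expressed by \cref{lem:love} as a duality pairing against $(I - Q_\tau) \D_{t_m-}^\alpha G_\lambda^m$. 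This reduces both summands to interval-wise averaging estimates of $(I - Q_\tau) y$, controlled via the weighted $L^1$ bound of \cref{lem:wg}.

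First I would exploit the hypothesis $\snm{y'(t)} \leqslant t^{-r}$ to control $(I - Q_\tau) y$ subinterval by subinterval. On $(0, t_1)$, direct integration of $\snm{y(t) - y(s)} \leqslant \int_{\min(s,t)}^{\max(s,t)} \tau^{-r}\,\mathrm{d}\tau$ gives $\nm{(I - Q_\tau) y}_{L^\infty(0, t_1)} \leqslant C_r t_1^{1-r} = C_{r,\sigma,T} J^{-\sigma(1-r)}$. For $j \geqslant 2$, using $t_{j-1} \geqslant (1/2)^\sigma t_j$ and $t_j - t_{j-1} \leqslant C_{\sigma,T} j^{\sigma - 1} J^{-\sigma}$, one obtains
\[
\nm{(I - Q_\tau) y}_{L^\infty(t_{j-1}, t_j)} \leqslant C_{r,\sigma,T}\, j^{\sigma(1-r) - 1} J^{-\sigma(1-r)}.
\]
Taking the maximum over $j$ already bounds the first summand by $C_{r,\sigma,T} J^{-\min\{\sigma(1-r), 1\}}$, since the factor $j^{\sigma(1-r) - 1}$ peaks at $j = 1$ when $\sigma(1-r) \leqslant 1$ and at $j = J$ when $\sigma(1-r) > 1$.

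For the second summand, \cref{lem:love} and the triangle inequality on each subinterval combined with the interval-wise bounds above give
\[
\snm{(\Pi_\tau^\lambda y - Q_\tau y)(t_m-)} \leqslant C_{r,\sigma,T}\, J^{-\sigma(1-r)} \sum_{j=1}^m j^{\sigma(1-r) - 1} \nm{(I - Q_\tau) \D_{t_m-}^\alpha G_\lambda^m}_{L^1(t_{j-1}, t_j)}.
\]
The key manipulation is the factorization
\[
j^{\sigma(1-r) - 1} = \big[j^{\sigma(1-r) - 1} (j/m)^{(\sigma-1)(1-\alpha)}\big] \cdot (m/j)^{(\sigma-1)(1-\alpha)},
\]
after which I pull out the maximum over $j$ of the bracketed factor and invoke \cref{lem:wg} to absorb the remaining weighted $L^1$ sum by a constant.

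The main obstacle is tracking the sign of the exponent $\sigma(1-r) - 1 + (\sigma-1)(1-\alpha)$ of $j$ inside the brackets. If it is nonnegative, the maximum is attained at $j = m$ and equals $m^{\sigma(1-r) - 1}$, producing the bound $C J^{-\sigma(1-r)} m^{\sigma(1-r) - 1}$; this is $\leqslant C J^{-\sigma(1-r)}$ when $\sigma(1-r) \leqslant 1$, while $m^{\sigma(1-r)-1} \leqslant J^{\sigma(1-r)-1}$ yields $C J^{-1}$ otherwise. If the exponent is negative, the maximum is at $j = 1$, equal to $m^{-(\sigma-1)(1-\alpha)} \leqslant 1$, and the sign condition forces $\sigma(1-r) < 1$, so the prefactor alone delivers $C J^{-\sigma(1-r)}$. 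In every case the total bound is $C J^{-\min\{\sigma(1-r), 1\}}$, which, combined with the estimate for the first summand, proves \cref{eq:err-Pi-infty}.
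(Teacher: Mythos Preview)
Your proof is correct and follows essentially the same route as the paper: insert $Q_\tau y$, invoke \cref{lem:love} and \cref{lem:wg}, and bound $(I-Q_\tau)y$ intervalwise via the hypothesis on $y'$. The paper avoids your case analysis by observing at once that $(m/j)^{(\sigma-1)(\alpha-1)}\leqslant 1$ (since $\alpha<1$ and $\sigma\geqslant 1$), so the weighted maximum is bounded directly by $\nm{(I-Q_\tau)y}_{L^\infty(0,t_m)}$.
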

\begin{proof} 
  For any $ 1 \leqslant m \leqslant J $,
  \begin{small}
  \begin{align*} 
    & \Snm{
      \big(\Pi_\tau^\lambda y - Q_\tau y\big)(t_m-)
    } \\
    ={}& \Snm{
      \Dual{
        (I-Q_\tau) y, (I-Q_\tau) \D_{t_m-}^\alpha G_\lambda^m
      }_{(0,t_m)}
    } \quad \text{(by \cref{lem:love})} \\
    \leqslant{} &
    \sum_{j=1}^m \nm{(I-Q_\tau)y}_{L^\infty(t_{j-1},t_j)}
    \nm{
      (I-Q_\tau)\D_{t_m-}^\alpha G_\lambda^m
    }_{L^1(t_{j-1},t_j)} \\
    \leqslant{} &
    \max_{1 \leqslant j \leqslant m } (m/j)^{(\sigma-1)(\alpha-1)}
    \nm{(I-Q_\tau)y}_{L^\infty(t_{j-1},t_j)} \\
    & \quad {} \times \sum_{j=1}^m (m/j)^{(\sigma-1)(1-\alpha)}
    \nm{(I-Q_\tau)\D_{t_m-}^\alpha G_\lambda^m}_{L^1(t_{j-1},t_j)} \\
    \leqslant{} &
    C_{\alpha,\sigma,T} \max_{1 \leqslant j \leqslant m}
    (m/j)^{(\sigma-1)(\alpha-1)}
    \nm{(I-Q_\tau)y}_{L^\infty(t_{j-1},t_j)}
    \quad\text{(by \cref{eq:weigh-G})} \\
    \leqslant{} &
    C_{\alpha,\sigma,T} \nm{(I-Q_\tau)y}_{L^\infty(0,t_m)}.
  \end{align*}
  \end{small}
  It follows that
  \begin{small}
  \begin{align*}
    \bnm{
      \big(\Pi_\tau^\lambda - Q_\tau\big)y
    }_{L^\infty(0,T)} =
    \max_{1 \leqslant m \leqslant J}
    \Snm{
      \big(\Pi_\tau^\lambda y - Q_\tau y\big)(t_m-)
    }
    \leqslant C_{\alpha,\sigma,T}
    \nm{(I-Q_\tau)y}_{L^\infty(0,T)},
  \end{align*}
  \end{small}
  and hence
  \begin{small}
  \begin{align*}
    \bnm{\big(I-\Pi_\tau^\lambda\big)y}_{L^\infty(0,T)}
    & \leqslant \nm{(I-Q_\tau)y}_{L^\infty(0,T)} +
    \Nm{
      \big(\Pi_\tau^\lambda - Q_\tau\big)y
    }_{L^\infty(0,T)} \\
    & \leqslant C_{\alpha,\sigma,T}
    \nm{(I-Q_\tau)y}_{L^\infty(0,T)}.
  \end{align*}
  \end{small}
  In addition, by \cref{eq:y'-cond} we obtain
  \begin{small}
  \begin{align*}
    & \nm{(I-Q_\tau)y}_{L^\infty(0,T)} \leqslant
    \max_{1 \leqslant j \leqslant J}
    \nm{(I-Q_\tau)y}_{L^\infty(t_{j-1},t_j)} \\
    \leqslant{} &
    \max_{ 1 \leqslant j \leqslant J }
    \big( t_j^{1-r}-t_j^{1-r} \big)/(1-r) \\
    \leqslant{} & C_{\alpha,\sigma,r,T}
    \max_{1 \leqslant j \leqslant J}
    j^{\sigma(1-r)-1} J^{-\sigma(1-r)} \\
    \leqslant{} & C_{\alpha,\sigma,r,T}
    J^{-\min\{\sigma(1-r),1\}}.
  \end{align*}
  \end{small}
  Finally, combining the above two estimates proves \cref{eq:err-Pi-infty} and
  hence this lemma.
\end{proof}

Finally, we are in a position to prove \cref{thm:conv-diffu} as follows. \\
\noindent {\bf Proof of \cref{thm:conv-diffu}.}
For each $ n \in \mathbb N $, let
\[
  u^n(t) := (u(t), \phi_n)_X, \quad  0 \leqslant t \leqslant T.
\]
By \cref{eq:u} we have
\[
  u^n(t) = E_{\alpha,1}(-\lambda_n t^\alpha)
  (u_0, \phi_n)_X, \quad 0 < t \leqslant T.
\]
A straightforward calculation gives
\[
  (u^n)'(t) = -\lambda_n t^{\alpha-1}
  E_{\alpha,\alpha}(-\lambda_n t^\alpha)
  (u_0,\phi_n)_X \phi_n, \quad 0 \leqslant t \leqslant T,
\]
and hence \cref{eq:ml_grow} implies
\begin{equation}
  \label{eq:un'}
  \snm{(u^n)'(t)} \leqslant C_\alpha
  t^{\nu\alpha-1} \lambda_n^\nu \snm{(u_0,\phi_n)_X},
  \quad 0 < t \leqslant T.
\end{equation}
By \cref{eq:U-diffu,eq:u-weak-diffu,eq:def-Pi-diffu} we have $ U =
\sum_{n=0}^\infty (\Pi_\tau^{\lambda_n} u^n) \phi_n $, so that
\begin{align*}
  & \nm{u-U}_{L^\infty(0,T;X)} =
  \sup_{0 < t < T} \Big(
    \sum_{n=0}^\infty
    \snm{(u^n - \Pi_\tau^{\lambda_n} u^n)(t)}^2
  \Big)^{1/2}  \\
  \leqslant{} &
  \Big(
    \sum_{n=0}^\infty \nm{
      (I-\Pi_\tau^{\lambda_n})u^n
    }_{L^\infty(0,T)}^2
  \Big)^{1/2} \\
  \leqslant{} &
  C_{\alpha,\sigma,T} J^{-\min\{\sigma\alpha\nu,1\}}
  \Big(
    \sum_{n=0}^\infty \lambda_n^{2\nu} (u_0,\phi_n)_X^2
  \Big)^{1/2} \quad\text{(by \cref{lem:err-Pi,eq:un'})} \\
  ={} &
  C_{\alpha,\sigma,T} J^{-\min\{\sigma\alpha\nu,1\}}
  \nm{u_0}_{D(A^\nu)}.
\end{align*}
This proves \cref{eq:conv-diffu-inf} and thus concludes the proof.
\hfill\ensuremath{\blacksquare}

\section{Fractional wave equation (\texorpdfstring{$ 1 < \alpha < 2 $}))}
\label{sec:wave}
This section considers the following discretization: seek $ U \in
W_\tau^\text{c} $ such that $ U(0) = u_0 $ and
\begin{equation}
  \label{eq:U-wave}
  \int_0^T \left( \D_{0+}^{\alpha-1}U' + AU, V \right)_X
  \, \mathrm{d}t = 0 \quad \forall V \in W_\tau. 
\end{equation}
\begin{remark}
  By \cref{eq:u}, a straightforward calculation gives that
  \begin{equation}
    \label{eq:u-weak-wave}
    \int_0^T (\D_{0+}^{\alpha-1} u' + Au, V)_X \, \mathrm{d}t = 0
    \quad \forall V \in W_\tau.
  \end{equation}
\end{remark}

\begin{remark} 
 We note that  when using uniform temporal grids, the  discretization \cref{eq:U-wave} is equivalent to
the L1 scheme (cf. \cite{Li-Wang-Xie2019Wave}),
\end{remark}

\begin{theorem} 
  \label{thm:conv-wave}
  Assume that $ u_0 \in D(A^\nu) $ with $ 1/2 < \nu \leqslant 1 $. If
  \begin{equation}
    \label{eq:sigma-cond-wave}
    \sigma > \frac{3-\alpha}{\alpha(\nu-1/2)},
  \end{equation}
  then
  \begin{equation}
    \label{eq:conv-wave-inf}
    \max_{1 \leqslant m \leqslant J}
    \nm{(u-U)(t_m)}_X \leqslant
    C_{\alpha,\sigma,T} J^{\alpha-3}
    \nm{u_0}_{D(A^\nu)}.
  \end{equation}
\end{theorem}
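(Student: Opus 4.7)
The plan is to mirror the duality approach of \cref{thm:conv-diffu}, adapting it to the continuous piecewise-linear PG setting.

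\emph{Step 1: Spectral reduction and scalar projection.} First, I would expand $u(t) = \sum_n u^n(t)\phi_n$ and $U(t) = \sum_n U^n(t)\phi_n$. Identities \cref{eq:U-wave,eq:u-weak-wave} reduce matters mode by mode to a scalar PG projection $\Pi_\tau^\lambda$: given a scalar $y$ with $y(0)$ prescribed, $\Pi_\tau^\lambda y$ is the unique element of $\mathcal{W}_\tau^{\mathrm{c}}$ satisfying $(\Pi_\tau^\lambda y)(0) = y(0)$ and
\begin{equation*}
\int_0^T \bigl( \D_{0+}^{\alpha-1}(y - \Pi_\tau^\lambda y)' + \lambda (y - \Pi_\tau^\lambda y) \bigr) w \, \mathrm{d}t = 0 \quad \forall w \in \mathcal{W}_\tau .
\end{equation*}
Then $U^n = \Pi_\tau^{\lambda_n} u^n$, and since the nodal interpolant $\mathcal{I}_\tau u^n$ agrees with $u^n$ at every $t_m$, at the nodes $(u^n - U^n)(t_m) = (\mathcal{I}_\tau u^n - \Pi_\tau^{\lambda_n} u^n)(t_m)$.

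\emph{Step 2: Duality with a regularized Green function.} For each $1 \leqslant m \leqslant J$, I would introduce a piecewise-constant function $G_\lambda^m \in \mathcal{W}_\tau$, supported on $[0,t_m]$, defined by an adjoint problem analogous to \cref{eq:def-G-diffu} but tailored to the PG setting. Integration by parts in the spirit of \cref{lem:love} would then yield an identity of the form
\begin{equation*}
(\Pi_\tau^\lambda y - \mathcal{I}_\tau y)(t_m) = \Bigl\langle (I - \mathcal{I}_\tau) y, \, R_\tau^m G_\lambda^m \Bigr\rangle_{(0,t_m)} ,
\end{equation*}
where $R_\tau^m$ is an appropriate residual built from $G_\lambda^m$ and the adjoint fractional derivative. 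Applying the forthcoming weighted bound on $G_\lambda^m$ (\cref{lem:G2}) then reduces the estimation to a weighted $L^\infty$-norm bound on the interpolation error $(I - \mathcal{I}_\tau) u^n$.

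\emph{Step 3: Interpolation estimates and summation.} On each subinterval $(t_{j-1}, t_j)$ with $j \geqslant 2$, Taylor expansion gives
\begin{equation*}
\|(I - \mathcal{I}_\tau) u^n\|_{L^\infty(t_{j-1}, t_j)} \leqslant C (t_j - t_{j-1})^2 \sup_{t_{j-1} \leqslant t \leqslant t_j} \snm{(u^n)''(t)} ,
\end{equation*}
with the growth $\snm{(u^n)''(t)} \leqslant C t^{\alpha\nu - 2} \lambda_n^\nu \snm{(u_0,\phi_n)_X}$ furnished by the scalar analog of \cref{eq:u'-l2-growth}; the first interval $(0,t_1)$ is handled directly via the Hölder bound $\snm{u^n(t) - u^n(0)} \leqslant C t^{\alpha\nu} \lambda_n^\nu \snm{(u_0,\phi_n)_X}$. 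Combining these with the graded-mesh relations $t_j - t_{j-1} \simeq j^{\sigma-1} J^{-\sigma} T$ and $t_j \simeq (j/J)^\sigma T$, and using the threshold condition \cref{eq:sigma-cond-wave} to absorb the worst-case contribution from the singular initial interval, I would bound the nodal error by $C J^{\alpha - 3} \lambda_n^\nu \snm{(u_0,\phi_n)_X}$. Squaring and summing in $n$ via Parseval then produces \cref{eq:conv-wave-inf}.

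\emph{Main obstacle.} The technical heart is the weighted estimate on $G_\lambda^m$ (\cref{lem:G2}), the PG analog of \cref{lem:wg}. One must analyze the piecewise-constant Green function of the adjoint operator involving $\D_{t_m-}^{\alpha-1}$ and establish, by monotonicity and discrete maximum-principle arguments reminiscent of \cref{lem:G}, the correct graded weight $(m/j)^{\text{exponent}}$ that exactly compensates the $t^{\alpha\nu - 2}$ blow-up of $(u^n)''$ so as to produce the exponent $3 - \alpha$. The condition $\sigma > (3-\alpha)/(\alpha(\nu - 1/2))$ is exactly the threshold at which the dominant contribution from the first interval (where one must pay a half-order gap coming from the $D(A^{1/2})$-level norm appearing implicitly in the PG test, cf.~\cref{eq:u'-h1-growth}) is balanced against the contributions from the remaining intervals. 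The restriction $\nu > 1/2$ enters for the same reason: it is the minimum regularity of $u_0$ under which $(u^n)''$ can serve as the relevant driver on a piecewise-linear ansatz space.
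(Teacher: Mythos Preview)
Your overall strategy---spectral reduction to a scalar PG projection, then a duality/Green-function argument---is the right framework, but the implementation you outline diverges from the paper's in a way that leaves a genuine gap.

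The paper does \emph{not} use a $\lambda$-dependent Green function $G_\lambda^m$ in the wave case. Instead it introduces an auxiliary projection $\mathcal P_\tau$ that kills only the fractional-derivative part (no $\lambda$), see \cref{eq:def-Ptau}, together with a $\lambda$-\emph{independent} discrete Green function $\mathcal G^m$ defined by \cref{eq:def-G-wave}. The nodal error is then split as
\[
\bigl(y-\Xi_\tau^\lambda y\bigr)(t_m)
= \bigl(y-\mathcal P_\tau y\bigr)(t_m)
+ \bigl(\mathcal P_\tau y-\Xi_\tau^\lambda y\bigr)(t_m).
\]
The first piece is handled by duality against $\mathcal G^m$ (\cref{lem:love2,lem:G2,lem:cal-Ptau}), while the second is controlled by an \emph{energy argument} (\cref{lem:I-Pi-wave}), which produces the factor $\lambda^{1/2}\,\nm{(I-\mathcal P_\tau)y}_{L^{2/\alpha}}$. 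It is precisely this energy step that forces the $D(A^{1/2})$-level estimate \cref{eq:u'-h1-growth}, and hence the loss of half an order ($\nu\mapsto\nu-1/2$) in the grading condition \cref{eq:sigma-cond-wave}. Your proposal to build $G_\lambda^m$ directly from the full adjoint operator (including $\lambda$) would require a $\lambda$-uniform weighted bound analogous to \cref{lem:wg}; this is exactly what the paper sidesteps by the split, and you give no indication of how such a bound would be obtained for $1<\alpha<2$.

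A second, smaller issue: your duality identity pairs $(I-\mathcal I_\tau)y$ against an unspecified residual $R_\tau^m G_\lambda^m$. The paper's identity \cref{eq:love2} is more delicate: it pairs $\D_{0+}^{\alpha-1-\beta}(Q_\tau-I)y'$ against $\D_{t_m-}^\beta\mathcal G^m$ for $\alpha-1<\beta<1$, then passes to the limit $\beta\to 1^-$ to land on the quantity $\D_{0+}^{\alpha-2}(I-Q_\tau)y'$ estimated in \cref{lem:I-Qtau-wave}. This fractional splitting is what yields the $(3-\alpha)$-order rather than the naive second order your raw interpolation estimate would suggest; without it the orders do not match up.
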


The main task of the rest of this section is to prove the theorem  above. For
each $ 1 \leqslant m \leqslant J $, define $ \mathcal G^m \in \mathcal W_\tau $
by that $ \mathcal G^m|_{(t_m,T)} = 0 $ and that
\begin{equation}
  \label{eq:def-G-wave}
  \Dual{w,\D_{t_m-}^{\alpha-1}\mathcal G^m}_{(0,t_m)} =
  \Dual{1, w}_{(0,t_m)} \quad \forall w \in \mathcal W_\tau.
\end{equation}
Let $ \mathcal G_{m+1}^m = 0 $ and, for each $ 1 \leqslant j \leqslant m $, let
\[
  \mathcal G_j^m := \lim_{t \to {t_j-}} \mathcal G^m(t).
\]
Since
\[
  \D_{t_m-}^{\alpha-1}\mathcal G^m =
  \sum_{j=1}^m (\mathcal G_j^m - \mathcal G_{j+1}^m)
  \frac{(t_j-t)_{+}^{1-\alpha}}{\Gamma(2-\alpha)},
\]
a straightforward calculation yields, from \cref{eq:def-G-wave}, that
\begin{equation}
  \label{eq:90}
  \sum_{j=k}^m \left(
    \mathcal G_j^m - \mathcal G_{j+1}^m
  \right) \left(
    (t_j-t_{k-1})^{2-\alpha} -
    (t_j-t_k)^{2-\alpha}
  \right) = \Gamma(3-\alpha)(t_k - t_{k-1})
\end{equation}
for each $ 1 \leqslant k \leqslant m $.

\begin{remark}
	Although $\mathcal{G}^m$ is not a regularized Green function, it has similar properties. 
\end{remark}
\begin{lemma} 
  \label{lem:lbj}
  For any $ 1/2 < \beta < 1 $ and $ 1 \leqslant k \leqslant J $,
  \begin{small}
  \begin{equation}
    \label{eq:lbj}
    \sum_{j=1}^k (j/J)^{\sigma(1-\alpha)} \big(
      (t_k-t_{j-1})^{1-\beta} - (t_k-t_j)^{1-\beta}
    \big) \leqslant
    C_{\alpha,\sigma,T}
    (k/J)^{\sigma(2-\alpha-\beta)}.
  \end{equation}
  \end{small}
\end{lemma}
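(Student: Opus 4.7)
The plan is to dominate the discrete sum on the left-hand side of \eqref{eq:lbj} by a Beta-function integral. First, since $\beta<1$, for each $1\leqslant j\leqslant k$ I would rewrite the difference as an integral,
\[
(t_k-t_{j-1})^{1-\beta} - (t_k-t_j)^{1-\beta} = (1-\beta)\int_{t_{j-1}}^{t_j}(t_k-s)^{-\beta}\,\mathrm{d}s,
\]
which is valid even in the boundary case $j=k$ because $1-\beta>0$ makes the integral convergent.

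Second, I would bound the discrete weight by a continuous one. Using $t_j=T(j/J)^\sigma$, we have the identity $(j/J)^{\sigma(1-\alpha)}=T^{\alpha-1}t_j^{1-\alpha}$. Since $\alpha>1$, the exponent $1-\alpha$ is negative, so for any $s\in(t_{j-1},t_j)$ the monotonicity of $x\mapsto x^{1-\alpha}$ gives $t_j^{1-\alpha}\leqslant s^{1-\alpha}$, hence
\[
(j/J)^{\sigma(1-\alpha)}\leqslant T^{\alpha-1}\,s^{1-\alpha}\quad\text{for all }s\in(t_{j-1},t_j).
\]
Inserting both estimates into the sum and collapsing the disjoint union $\bigcup_{j=1}^k(t_{j-1},t_j)=(0,t_k)$ yields
\[
\text{LHS of }\eqref{eq:lbj}\leqslant(1-\beta)\,T^{\alpha-1}\int_0^{t_k}s^{1-\alpha}(t_k-s)^{-\beta}\,\mathrm{d}s.
\]

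Finally, the substitution $s=t_k u$ converts the integral into $t_k^{2-\alpha-\beta}\,B(2-\alpha,1-\beta)$, where $B$ is the Beta function (well-defined since $2-\alpha>0$ and $1-\beta>0$). Using the identity $(1-\beta)B(2-\alpha,1-\beta)=\Gamma(2-\alpha)\Gamma(2-\beta)/\Gamma(3-\alpha-\beta)$, this constant remains uniformly bounded for $\alpha\in(1,2)$ and $\beta\in(1/2,1)$. Combining with $t_k=T(k/J)^\sigma$ produces exactly the factor $(k/J)^{\sigma(2-\alpha-\beta)}$, yielding a constant depending only on $\alpha,\sigma,T$. There is no real obstacle: the only subtle point is the singular $j=1$ term, where the weight $(1/J)^{\sigma(1-\alpha)}$ is large, but this is precisely matched by the integrable singularity of $s^{1-\alpha}$ at the origin, so the continuous majorization absorbs it cleanly.
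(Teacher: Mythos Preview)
Your proof is correct and takes a genuinely different, more elementary route than the paper. The paper separates the terminal term $j=k$ from the tail $j\leqslant k-1$, bounds the latter via a mean-value estimate $(k^\sigma-(j-1)^\sigma)^{1-\beta}-(k^\sigma-j^\sigma)^{1-\beta}\leqslant C_\sigma(1-\beta)j^{\sigma-1}(k^\sigma-j^\sigma)^{-\beta}$, and then invokes the discrete convolution estimate of \cref{lem:discr-conv2} to control $\sum_{j=1}^{k-1}j^{2\sigma-\sigma\alpha-1}(k^\sigma-j^\sigma)^{-\beta}$, carefully tracking the $(1-\beta)^{-1}$ blow-up in that lemma against the explicit prefactor $(1-\beta)$ so that the final constant is $\beta$-independent. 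You instead rewrite each summand as an integral, majorize the step weight $t_j^{1-\alpha}$ pointwise by the continuous weight $s^{1-\alpha}$ (valid because $1-\alpha<0$), and collapse the sum into the single Beta integral $\int_0^{t_k}s^{1-\alpha}(t_k-s)^{-\beta}\,\mathrm{d}s$. This bypasses the discrete-sum machinery of Appendix~B entirely and makes the $\beta$-uniformity of the constant transparent through the closed form $T^{1-\beta}\Gamma(2-\alpha)\Gamma(2-\beta)/\Gamma(3-\alpha-\beta)$, which for fixed $\alpha\in(1,2)$ stays bounded on $\beta\in(1/2,1)$. The paper's approach, by contrast, is consistent with the style of the surrounding arguments (compare the proofs of \cref{eq:74}--\cref{eq:73} and \cref{lem:I-Qtau-wave}) and reuses the auxiliary lemmas already set up in Appendix~B; your approach trades that reuse for a shorter self-contained argument.
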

\begin{proof} 
  An elementary calculation gives
  \begin{small}
  \[
    k^{\sigma(1-\alpha)}
    \big( k^\sigma - (k-1)^\sigma \big)^{1-\beta}
    \leqslant C_\sigma k^{\sigma(1-\alpha)}
    k^{(\sigma-1)(1-\beta)} =
    C_\sigma k^{\sigma(2-\alpha-\beta)+\beta-1}
  \]
  \end{small}
  and
  \begin{small}
  \begin{align*}
    & \sum_{j=1}^{k-1} j^{\sigma(1-\alpha)}
    \left(
      \big( k^\sigma-(j-1)^\sigma \big)^{1-\beta} -
      \big( k^\sigma-j^\sigma \big)^{1-\beta}
    \right) \\
    \leqslant{} &
    C_\sigma (1-\beta) \sum_{j=1}^{k-1}
    j^{\sigma(1-\alpha)} \big( k^\sigma-j^\sigma \big)^{-\beta}
    j^{\sigma-1} \\
    ={} & C_\sigma (1-\beta) \sum_{j=1}^{k-1}
    j^{2\sigma-\sigma\alpha-1}\big( k^\sigma-j^\sigma \big)^{-\beta} \\
    \leqslant{} &
    C_{\alpha,\sigma} k^{\sigma(2-\alpha-\beta)}
    \quad\text{(by \cref{lem:discr-conv2}).}
  \end{align*}
  \end{small}
  It follows that
  \begin{small}
  \begin{align*}
    & \sum_{j=1}^k (j/J)^{\sigma(1-\alpha)} \big(
      (t_k-t_{j-1})^{1-\beta} - (t_k-t_j)^{1-\beta}
    \big) \\
    ={} &
    J^{-\sigma(2-\alpha-\beta)}
    T^{1-\beta} \sum_{j=1}^k j^{\sigma(1-\alpha)} \left(
      \big( k^\sigma-(j-1)^\sigma \big)^{1-\beta} -
      \big( k^\sigma - j^\sigma \big)^{1-\beta}
    \right) \\
    \leqslant{} &
    C_{\alpha,\sigma,T} J^{-\sigma(2-\alpha-\beta)}
    \left(
      k^{\sigma(2-\alpha-\beta)+\beta-1} +
      k^{\sigma(2-\alpha-\beta)}
    \right) \\
    \leqslant{} &
    C_{\alpha,\sigma,T} (k/J)^{\sigma(2-\alpha-\beta)}.
  \end{align*}
  \end{small}
  This proves \cref{eq:lbj} and hence this lemma.
\end{proof}

\begin{lemma} 
  \label{lem:G2}
  For any $ 1/2 < \beta < 1 $ and $ 1 \leqslant m \leqslant J $,
  \begin{equation}
    \label{eq:G2}
    \sum_{j=1}^m (j/J)^{\sigma(1-\alpha)}
    \nm{\D_{t_m-}^\beta \mathcal G^m}_{L^1(t_{j-1},t_j)}
    \leqslant C_{\alpha,\sigma,T}.
  \end{equation}
\end{lemma}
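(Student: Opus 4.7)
The proof will mirror the structure of \cref{lem:wg}, but is actually simpler because there is no quadrature operator $(I-Q_\tau)$ to absorb into the weights. The overall plan is to expand $\D_{t_m-}^\beta\mathcal{G}^m$ as a linear combination of the nodal jumps $\mathcal{G}_j^m-\mathcal{G}_{j+1}^m$, bound its $L^1$ mass on each subinterval $(t_{k-1},t_k)$ by the triangle inequality, interchange the $j$ and $k$ sums so that \cref{lem:lbj} delivers the clean weight $(j/J)^{\sigma(2-\alpha-\beta)}$, and then control the resulting weighted sum of jumps by exploiting the defining identity \cref{eq:90}.

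Concretely, writing $\mathcal{G}^m=\sum_{j=1}^m(\mathcal{G}_j^m-\mathcal{G}_{j+1}^m)\chi_{(0,t_j)}$ and applying $\D_{t_m-}^\beta$ termwise (using $\D_{t_m-}^\beta\chi_{(0,t_j)}(t)=(t_j-t)_+^{-\beta}/\Gamma(1-\beta)$ for $t<t_m$) one obtains
\[
(\D_{t_m-}^\beta\mathcal{G}^m)(t)=\frac{1}{\Gamma(1-\beta)}\sum_{j=1}^m(\mathcal{G}_j^m-\mathcal{G}_{j+1}^m)(t_j-t)_+^{-\beta}.
\]
Integrating termwise on $(t_{k-1},t_k)$, taking absolute values, multiplying by $(k/J)^{\sigma(1-\alpha)}$, interchanging the double sum, and invoking \cref{lem:lbj} on the inner $k$-sum yields
\[
\sum_{k=1}^m(k/J)^{\sigma(1-\alpha)}\nm{\D_{t_m-}^\beta\mathcal{G}^m}_{L^1(t_{k-1},t_k)}\leqslant C_{\alpha,\sigma,T}\sum_{j=1}^m\snm{\mathcal{G}_j^m-\mathcal{G}_{j+1}^m}(j/J)^{\sigma(2-\alpha-\beta)}.
\]

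The main obstacle is the resulting uniform bound
\[
\sum_{j=1}^m\snm{\mathcal{G}_j^m-\mathcal{G}_{j+1}^m}(j/J)^{\sigma(2-\alpha-\beta)}\leqslant C_{\alpha,\sigma,T}.
\]
I expect this to follow from an analog of \cref{lem:G} for $\mathcal{G}^m$: namely $0<\mathcal{G}_1^m<\mathcal{G}_2^m<\cdots<\mathcal{G}_m^m$ together with a size estimate $\mathcal{G}_j^m-\mathcal{G}_{j-1}^m\leqslant C_{\alpha,\sigma,T}(j/J)^{(\sigma-1)(\alpha-1)}J^{-\sigma(\alpha-1)}$ for the nodal increments. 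Both ingredients should be obtained by inducting from $k=m$ downward in the triangular system \cref{eq:90}, exactly as in the proof of \cref{lem:G}; the seed is the explicit boundary value $\mathcal{G}_m^m=\Gamma(3-\alpha)(t_m-t_{m-1})^{\alpha-1}$, which comes from inserting $k=m$ into \cref{eq:90}. Once monotonicity is in place the absolute values collapse into a telescoping sum, and the graded-grid scaling $t_j=T(j/J)^\sigma$ reduces the remaining bookkeeping to a routine power-sum estimate of the same flavor as \cref{lem:lbj}.
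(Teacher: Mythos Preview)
Your overall architecture---expand $\D_{t_m-}^\beta\mathcal G^m$ as a sum over the jumps $\mathcal G_j^m-\mathcal G_{j+1}^m$, interchange, and apply \cref{lem:lbj}---is exactly the paper's. Two points, however, need correction.

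First, the monotonicity goes the other way: one has
\[
\mathcal G_1^m>\mathcal G_2^m>\cdots>\mathcal G_m^m=\Gamma(3-\alpha)(t_m-t_{m-1})^{\alpha-1}>0,
\]
not the increasing order you wrote. The reversal compared with $G_\lambda^m$ stems from the right-hand side of \cref{eq:def-G-wave}, which is $\langle 1,w\rangle_{(0,t_m)}$ rather than a mass concentrated on the last subinterval; the induction uses \cref{lem:G-wave}, whose inequality points the opposite way from \cref{lem:31}. With the correct sign the jumps $\mathcal G_j^m-\mathcal G_{j+1}^m$ are already nonnegative, so your absolute values are harmless, but the ``increasing'' claim and the increment estimate $\mathcal G_j^m-\mathcal G_{j-1}^m\leqslant\ldots$ are both false as stated.

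Second, for the final weighted bound the paper does \emph{not} estimate the individual increments. Instead it inserts $k=1$ into \cref{eq:90} to obtain the exact identity
\[
\sum_{j=1}^m\bigl(\mathcal G_j^m-\mathcal G_{j+1}^m\bigr)\frac{t_j^{2-\alpha}-(t_j-t_1)^{2-\alpha}}{\Gamma(3-\alpha)\,t_1}=1,
\]
and then observes $\dfrac{t_j^{2-\alpha}-(t_j-t_1)^{2-\alpha}}{\Gamma(3-\alpha)\,t_1}\geqslant C_{\alpha,\sigma,T}(j/J)^{\sigma(1-\alpha)}$; together with positivity of the jumps this gives
\[
\sum_{j=1}^m(j/J)^{\sigma(1-\alpha)}\bigl(\mathcal G_j^m-\mathcal G_{j+1}^m\bigr)\leqslant C_{\alpha,\sigma,T}.
\]
Since $2-\alpha-\beta>1-\alpha$ and $j/J\leqslant 1$, one has $(j/J)^{\sigma(2-\alpha-\beta)}\leqslant(j/J)^{\sigma(1-\alpha)}$, and your target sum is dominated by the one above. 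This closes the argument without any pointwise control of $\mathcal G_{j}^m-\mathcal G_{j+1}^m$, which would be harder to justify and which your proposed formula does not get right (the $J$-power is off).
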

\begin{proof}
 By \cref{eq:90,lem:G-wave},  an inductive argument yields  that
  \begin{equation}
    \label{eq:92}
    \mathcal G_1^m > \mathcal G_2^m > \ldots > \mathcal G_m^m =
    \Gamma(3-\alpha) (t_m-t_{m-1})^{\alpha-1}.
  \end{equation}
  Plugging $ k = 1 $ into \cref{eq:90} shows
  \begin{align*}
    \sum_{j=1}^m \left(
      \mathcal G_j^m - \mathcal G_{j+1}^m
    \right) \big(
      t_j^{2-\alpha} - (t_j-t_1)^{2-\alpha}
    \big) = \Gamma(3-\alpha) t_1,
  \end{align*}
  and hence
  \[
    \sum_{j=1}^m \frac{
      t_j^{2-\alpha} - (t_j-t_1)^{2-\alpha}
    }{\Gamma(3-\alpha)t_1} \left(
      \mathcal G_j^m - \mathcal G_{j+1}^m
    \right) = 1.
  \]
  From  \cref{eq:92} and the inequality
  \[
    \frac{t_j^{2-\alpha}-(t_j-t_1)^{2-\alpha}}
    {\Gamma(3-\alpha)t_1}
    \geqslant C_{\alpha,\sigma,T}
    (j/J)^{\sigma(1-\alpha)},
  \]
  it follows that
  \begin{equation}
    \label{eq:91}
    \sum_{j=1}^m (j/J)^{\sigma(1-\alpha)}
    \big(\mathcal G_j^m-\mathcal G_{j+1}^m\big)
    \leqslant C_{\alpha,\sigma,T}.
  \end{equation}
  Since
  \[
    \D_{t_m-}^\beta \mathcal G^m = \sum_{j=1}^m \left(
      \mathcal G_j^m-\mathcal G_{j+1}^m
    \right) \frac{(t_j-t)_{+}^{-\beta}}{\Gamma(1-\beta)},
  \]
  we obtain
  \begin{small}
  \begin{align*}
    & \sum_{j=1}^m (j/J)^{\sigma(1-\alpha)}
    \nm{\D_{t_m-}^\beta \mathcal G^m}_{L^1(t_{j-1}, t_j)} \\
    \leqslant{} &
    \sum_{j=1}^m (j/J)^{\sigma(1-\alpha)}
    \sum_{k=j}^m (\mathcal G_k^m-\mathcal G_{k+1}^m)
    \frac{
      (t_k-t_{j-1})^{1-\beta} - (t_k-t_j)^{1-\beta}
    }{\Gamma(2-\beta)} \quad\text{(by \cref{eq:92})} \\
    ={} &
    \sum_{k=1}^m (\mathcal G_k^m-\mathcal G_{k+1}^m) \sum_{j=1}^k
    (j/J)^{\sigma(1-\alpha)} \frac{
      (t_k-t_{j-1})^{1-\beta} - (t_k-t_j)^{1-\beta}
    }{\Gamma(2-\beta)} \\
    \leqslant{} & C_{\alpha,\sigma,T}
    \sum_{k=1}^m (k/J)^{\sigma(2-\alpha-\beta)}
    \left(\mathcal G_k^m - \mathcal G_{k+1}^m\right)
    \quad\text{(by \cref{lem:lbj,eq:92})} \\
    \leqslant{} & C_{\alpha,\sigma,T}
    \quad \text{(by \cref{eq:92,eq:91}).}
  \end{align*}
  \end{small}
  This proves \cref{eq:G2} and thus completes the proof.
\end{proof}
\begin{remark}
  For more details about proving \cref{eq:92}, we refer the reader to the proof
  of \cref{eq:G-1}.
\end{remark}

\begin{lemma} 
  \label{lem:I-Qtau-wave}
  Assume that $ y \in C^2((0,T];X) $ satisfies
  \begin{equation}
    \label{eq:732}
    t^{-1} \nm{y'(t)}_X + \nm{y''(t)}_X \leqslant t^{-r},
    \quad 0 < t \leqslant T,
  \end{equation}
  where $ 0 < r < 2 $. For each $ 1 \leqslant j \leqslant J $, the following
  three estimates hold: \\ if $ \sigma < 2/(3-r) $, then
  \begin{small}
  \begin{equation} 
    \label{eq:I-Qtau-wave-1}
    \Nm{
      \D_{0+}^{\alpha-2}(I\!-\!Q_\tau)y'
    }_{L^\infty(t_{j-1},t_j;X)}
    \leqslant C_{\alpha,\sigma,r,T}
    J^{-\sigma(3-\alpha-r)} j^{-\sigma\alpha}
    \big( j^{\sigma(3-r)+\alpha-3} + 1 \big);
  \end{equation}
  \end{small}
  if $ \sigma = 2/(3-r) $, then
  \begin{small}
  \begin{equation} 
    \label{eq:I-Qtau-wave-2}
    \Nm{
      \D_{0+}^{\alpha-2}(I\!-\!Q_\tau)y'
    }_{L^\infty(t_{j-1},t_j;X)}
    \leqslant C_{\alpha,r,T}
    J^{-\sigma(3-\alpha-r)} j^{-\sigma\alpha}
    \big( j^{\sigma(3-r)+\alpha-3} + \ln j \big);
  \end{equation}
  \end{small}
  if $ \sigma > 2/(3-r) $, then
  \begin{small}
  \begin{equation} 
    \label{eq:I-Qtau-wave}
    \Nm{
      \D_{0+}^{\alpha-2}(I\!-\!Q_\tau)y'
    }_{L^\infty(t_{j-1},t_j;X)}
    \leqslant C_{\alpha,\sigma,r,T}
    J^{-\sigma(3-\alpha-r)}
    j^{\sigma(3-\alpha-r)+\alpha-3}.
  \end{equation}
  \end{small}
\end{lemma}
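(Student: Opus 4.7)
The plan is to write
\[
  (\D_{0+}^{\alpha-2}(I-Q_\tau)y')(t) =
  \frac{1}{\Gamma(2-\alpha)}\int_0^t (t-s)^{1-\alpha}(I-Q_\tau)y'(s)\,ds
  \quad\text{for } t\in(t_{j-1},t_j),
\]
and to split the integration domain into a local piece on $(t_{j-1},t)$, an initial piece on $(0,t_1)$, and intermediate pieces $\{(t_{k-1},t_k):2\leqslant k\leqslant j-1\}$. Each piece is then estimated using the regularity \cref{eq:732} together with either a direct oscillation bound or the zero-mean property of $(I-Q_\tau)y'$ on the subinterval. The key inputs are: $\|y''(s)\|_X\leqslant s^{-r}$ yields $\text{osc}_{(t_{k-1},t_k)}y'\leqslant (t_k-t_{k-1})\,t_{k-1}^{-r}$ for $k\geqslant 2$; while the derived bound $\|y'(s)\|_X\leqslant s^{1-r}$ handles the singular subinterval $(0,t_1)$, yielding $\int_0^{t_1}\|y'(s)\|_X\,ds\leqslant C\,t_1^{2-r}$.

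For the local piece (for $j\geqslant 2$) a direct estimate gives
\[
  \Nm{\textstyle\int_{t_{j-1}}^t (t-s)^{1-\alpha}(I-Q_\tau)y'(s)\,ds}_X
  \leqslant C(t_j-t_{j-1})^{3-\alpha}\,t_{j-1}^{-r}
  \sim J^{-\sigma(3-\alpha-r)}\,j^{\sigma(3-\alpha-r)+\alpha-3},
\]
which is the main term in the regime $\sigma>2/(3-r)$. For the intermediate pieces $2\leqslant k\leqslant j-2$, since $s<t_k\leqslant t_{j-1}<t$ the kernel $s\mapsto(t-s)^{1-\alpha}$ is smooth on $(t_{k-1},t_k)$ with derivative bounded by $(\alpha-1)(t-t_k)^{-\alpha}$; I would exploit the zero-mean property of $(I-Q_\tau)y'$ on $(t_{k-1},t_k)$ by subtracting a suitable constant kernel value and then applying the oscillation bound, producing a contribution of order $C(t_k-t_{k-1})^3\,t_{k-1}^{-r}\,(t-t_k)^{-\alpha}$. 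Using $t-t_k\sim t_j-t_k$ and summing, the total intermediate contribution reduces to
\[
  J^{-\sigma(3-\alpha-r)}\sum_{k=2}^{j-2}k^{\sigma(3-r)-3}\,(j^\sigma-k^\sigma)^{-\alpha},
\]
which is controlled by a discrete convolution estimate analogous to \cref{lem:discr-conv1,lem:discr-conv2}. The asymptotic behavior of this sum as $j\to\infty$ depends on whether the exponent $\sigma(3-r)-3$ is greater than, equal to, or less than $-1$ (equivalently, $\sigma$ vs $2/(3-r)$); this trichotomy is exactly the source of the three cases in the statement.

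The two boundary subintervals require separate treatment. For $k=j-1$ the smoothness-of-kernel argument fails because $t-t_{j-1}$ may be arbitrarily small; instead I would apply a direct estimate bounding the integral by $(t_{j-1}-t_{j-2})\,t_{j-2}^{-r}\cdot(t-t_{j-2})^{2-\alpha}/(2-\alpha)$, which also scales as $J^{-\sigma(3-\alpha-r)}\,j^{\sigma(3-\alpha-r)+\alpha-3}$ and so matches the local contribution. For $k=1$, where $y'$ itself is singular at $0$, I would combine the zero-mean trick (noting $(t-s)^{1-\alpha}$ has smoothness $\lesssim t_1(t-t_1)^{-\alpha}$ on $(0,t_1)$) with the $L^1$-bound $\int_0^{t_1}\|y'(s)\|_X\,ds\leqslant C\,t_1^{2-r}$, obtaining a contribution of order $t_1^{3-r}(t-t_1)^{-\alpha}\sim J^{-\sigma(3-\alpha-r)}\,j^{-\sigma\alpha}$. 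This is precisely the extra $j^{-\sigma\alpha}$ summand present in \cref{eq:I-Qtau-wave-1,eq:I-Qtau-wave-2}, and it gets absorbed into $j^{\sigma(3-\alpha-r)+\alpha-3}$ exactly when $\sigma>2/(3-r)$, recovering \cref{eq:I-Qtau-wave}. The special cases $j=1,2$ follow directly since only the local and $k=1$ pieces occur.

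The main obstacle will be the discrete convolution estimate for the intermediate range, namely identifying the correct asymptotics in each regime of $\sigma(3-r)$ relative to $2$ and extracting the logarithmic correction precisely at the critical value $\sigma=2/(3-r)$ (where the convolution becomes borderline divergent, similarly to the behavior in \cref{lem:discr-conv1,lem:discr-conv2}). A secondary but crucial technical point is ensuring the $k=j-1$ boundary term matches the local contribution without producing a $(t-t_{j-1})^{-\alpha}$ singularity in the $L^\infty_t$ norm, which forces the use of a direct estimate there rather than the smoothness-based one applied for $k\leqslant j-2$.
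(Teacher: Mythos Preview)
Your proposal is correct and follows essentially the same route as the paper's proof: the same five-piece decomposition (local piece on $(t_{j-1},t)$, initial piece on $(0,t_1)$, intermediate pieces $2\leqslant k\leqslant j-2$, boundary piece $k=j-1$, and the special cases $j=1,2$), the same use of the zero-mean property of $(I-Q_\tau)y'$ to gain an extra factor of oscillation in the kernel, and the reduction to the discrete convolution of \cref{lem:discr-conv1} whose applicability threshold $\beta=\sigma(3-r)-3>-1$ produces exactly the trichotomy. The only cosmetic difference is that the paper writes the oscillation bounds as $|t_k^{1-r}-t_{k-1}^{1-r}|$ rather than your equivalent $(t_k-t_{k-1})t_{k-1}^{-r}$.
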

\begin{proof} 
  We only present a proof of \cref{eq:I-Qtau-wave}, the proofs of
  \cref{eq:I-Qtau-wave-1,eq:I-Qtau-wave-2} being similar. Since the case $ r=1 $
  can be proved analogously, we assume that $ r \neq 1 $.

  Let us first prove that
  \begin{small}
  \begin{equation}
    \label{eq:a-t}
    \begin{aligned}
       \sup_{t_{j-1} \leqslant a < t_j}
      \Nm{
        \Dual{
          (a-t)^{1-\alpha}, \,
          (I\!-\!Q_\tau)y'
        }_{(0,t_{j-1})}
      }_X 
      \leqslant{}  C_{\alpha,\sigma,r,T}
      J^{-\sigma(3-\alpha-r)}
      j^{\sigma(3-\alpha-r)+\alpha-3}
    \end{aligned}
  \end{equation}
  \end{small}
  for each $ 2 \leqslant j \leqslant J $. Since the case $ j=2 $ can be easily
  verified, we assume that $ 3 \leqslant j \leqslant J $. Let $ t_{j-1}
  \leqslant a < t_j $. By the definition of $ Q_\tau $, we have
  \begin{equation}
    \label{eq:shit}
    \Nm{
      \Dual{(a-t)^{1-\alpha}, (I-Q_\tau)y'}_{(0,t_{j-1})}
    }_X \leqslant \mathbb I_1 + \mathbb I_2 + \mathbb I_3,
  \end{equation}
  where
  \begin{small}
  \begin{align*}
    \mathbb I_1 &:= \Nm{
      \Dual{
        (I-Q_\tau)(a-t)^{1-\alpha}, (I-Q_\tau)y'
      }_{(0,t_1)}
    }_X, \\
    \mathbb I_2 &:= \sum_{k=2}^{j-2}
    \Nm{
      \Dual{
        (I-Q_\tau)(a-t)^{1-\alpha},
        (I-Q_\tau)y'
      }_{(t_{k-1},t_k)}
    }_X, \\
    \mathbb I_3 &:= \Nm{
      \Dual{
        (I-Q_\tau)(a-t)^{1-\alpha},
        (I-Q_\tau)y'
      }_{(t_{j-2},t_{j-1})}
    }_X.
  \end{align*}
  \end{small}
  By \cref{eq:732} and the facts $ \sigma>2/(3-r) $ and $ t_{j-1} \leqslant a $,
  a routine calculation yields the following three estimates:
  \begin{small}
  \begin{align*}
    \mathbb I_1 & \leqslant
    \nm{(I-Q_\tau)(a-t)^{1-\alpha}}_{L^\infty(0,t_1)}
    \nm{(I-Q_\tau)y'}_{L^1(0,t_1;X)} \\
    & \leqslant C_{\alpha,r} \big(
      (a-t_1)^{1-\alpha}-a^{1-\alpha}
    \big) t_1^{2-r} \\
    & \leqslant C_{\alpha,r} \big(
      (t_{j-1}-t_1)^{1-\alpha}-t_{j-1}^{1-\alpha}
    \big) t_1^{2-r} \\
    & \leqslant C_{\alpha,\sigma,r,T} J^{-\sigma(3-\alpha-r)}
    \big(
      ((j-1)^\sigma-1)^{1-\alpha} -
      (j-1)^{\sigma(1-\alpha)}
    \big) \\
    & \leqslant C_{\alpha,\sigma,r,T} J^{-\sigma(3-\alpha-r)}
    j^{-\sigma\alpha} \\
    & \leqslant C_{\alpha,\sigma,r,T}
    J^{-\sigma(3-\alpha-r)}
    j^{\sigma(3-\alpha-r) + \alpha - 3},
  \end{align*}
  \begin{align*}
    \mathbb I_2 & \leqslant C_\alpha \sum_{k=2}^{j-2} 
    \nm{(I-Q_\tau) y'}_{L^\infty(t_{k-1},t_k;X)}
    (t_k-t_{k-1}) \big(
      (a-t_k)^{1-\alpha} - (a-t_{k-1})^{1-\alpha}
    \big) \\
    & \leqslant C_{\alpha,r}
    \sum_{k=2}^{j-2} \Snm{t_k^{1-r}-t_{k-1}^{1-r}} (t_k-t_{k-1})
    ((t_{j-1}-t_k)^{1-\alpha} - (t_{j-1}-t_{k-1})^{1-\alpha}) \\
    & \leqslant C_{\alpha,\sigma,r,T}
    J^{-\sigma(3-\alpha-r)} \sum_{k=2}^{j-2}
    k^{\sigma(1-r)-1} k^{2(\sigma-1)}
    (j^\sigma - k^\sigma)^{-\alpha} \\
    & = C_{\alpha,\sigma,r,T}
    J^{-\sigma(3-\alpha-r)} \sum_{k=2}^{j-2}
    k^{3\sigma-\sigma r-3} (j^\sigma-k^\sigma)^{-\alpha} \\
    & \leqslant C_{\alpha,\sigma,r,T}
    J^{-\sigma(3-\alpha-r)}
    j^{\sigma(3-\alpha- r) + \alpha - 3}
    \quad\text{(by \cref{lem:discr-conv1})}
  \end{align*}
  \end{small}
  and
  \begin{align*} 
    \mathbb I_3 & \leqslant
    C_\alpha \nm{(I-Q_\tau)y'}_{L^\infty(t_{j-2},t_{j-1};X)}
    \big( (a-t_{j-2})^{2-\alpha} - (a-t_{j-1})^{2-\alpha} \big) \\
    & \leqslant C_{\alpha,r} \Snm{ t_{j-1}^{1-r}-t_{j-2}^{1-r} }
    (t_{j-1}-t_{j-2})^{2-\alpha} \\
    & \leqslant
    C_{\alpha,\sigma,r,T}
    J^{-\sigma(3-\alpha-r)}
    j^{\sigma(1-r)-1} j^{(\sigma-1)(2-\alpha)} \\
    & =
    C_{\alpha,\sigma,r,T}
    J^{-\sigma(3-\alpha-r)}
    j^{\sigma(3-\alpha-r)+\alpha-3}.
  \end{align*}
  Since $a$, $ t_{j-1} \leqslant a < t_j $, is arbitrary, combining \cref{eq:shit} and
  the above three estimates proves   \cref{eq:a-t}   for   $ 3
  \leqslant j \leqslant J $.

  Next, let us prove that \cref{eq:I-Qtau-wave} holds for all $ 2 \leqslant j
  \leqslant J $. For any $ t_{j-1} \leqslant a < t_j $,
  \begin{equation}
    \label{eq:101}
    \big(\D_{0+}^{\alpha-2}(y'-Q_\tau y')\big)(a) =
    (\mathbb I_4 + \mathbb I_5) / \Gamma(2-\alpha),
  \end{equation}
  where
  \begin{align*}
    \mathbb I_4 &:= \Dual{
      (a-t)^{1-\alpha}, \, (I-Q_\tau)y'
    }_{(t_{j-1},a)}, \\
    \mathbb I_5 &:= \Dual{
      (a-t)^{1-\alpha}, \, (I-Q_\tau)y'
    }_{(0,t_{j-1})}.
  \end{align*}
  We have
  \begin{align*}
    \nm{\mathbb I_4}_X
    & \leqslant C_{\alpha} (a-t_{j-1})^{2-\alpha}
    \nm{(I-Q_\tau)y'}_{L^\infty(t_{j-1},t_j;X)} \\
    & \leqslant C_{\alpha,r} (t_j-t_{j-1})^{2-\alpha}
    \Snm{ t_j^{1-r} - t_{j-1}^{1-r} }
    \quad\text{(by \cref{eq:732})} \\
    & \leqslant C_{\alpha,\sigma,r,T} J^{-\sigma(3-\alpha-r)}
    j^{(\sigma-1)(2-\alpha)} j^{\sigma(1-r)-1} \\
    & = C_{\alpha,\sigma,r,T} J^{-\sigma(3-\alpha-r)}
    j^{\sigma(3-\alpha- r)+\alpha-3}
  \end{align*}
  and, by \cref{eq:a-t}, 
  \[
    \nm{\mathbb I_5}_X \leqslant C_{\alpha,\sigma,r,T}
    J^{-\sigma(3-\alpha-r)}
    j^{\sigma(3-\alpha-r)+\alpha-3}.
  \]
  Combining the above two estimates and \cref{eq:101} gives
  \[
    \nm{
      \big( \D_{0+}^{\alpha-2}(y'-Q_\tau y') \big)(a)
    }_X \leqslant C_{\alpha,\sigma,r,T}
    J^{-\sigma(3-\alpha-r)}
    j^{\sigma(3-\alpha-r)+\alpha-3}.
  \]
  Hence, the arbitrariness of $ t_{j-1} \leqslant a < t_j $ proves  
  \cref{eq:I-Qtau-wave}  for   $ 2 \leqslant j \leqslant J $.

  Finally, for any $ 0 < a \leqslant t_1 $,
  \begin{align*} 
    & \nm{
      \big( \D_{0+}^{\alpha-2} (I-Q_\tau)y' \big)(a)
    }_X \\
    \leqslant{} & C_{\alpha,r}
    \int_0^a (a-t)^{1-\alpha} (t^{1-r} + t_1^{1-r}) \, \mathrm{d}t
    \quad\text{(by \cref{eq:732})} \\
    \leqslant{} & C_{\alpha,r} \left(
      a^{3-\alpha-r} \int_0^1 (1-s)^{1-\alpha}s^{1-r} \, \mathrm{d}s +
      a^{2-\alpha} t_1^{1-r}
    \right) \\
    \leqslant{} & C_{\alpha,r}
    t_1^{3-\alpha-r} \leqslant
    C_{\alpha,\sigma,r,T} J^{-\sigma(3-\alpha-r)}.
  \end{align*}
  This proves \cref{eq:I-Qtau-wave} for $ j= 1 $ and thus concludes the proof.
\end{proof}

For any $ y \in H^{(\alpha+1)/2}(0,T) $, define $ \mathcal P_\tau y \in \mathcal
W_\tau^\text{c} $ by
\begin{equation}
  \label{eq:def-Ptau}
  \left\{
    \begin{aligned}
      & (y-\mathcal P_\tau y)(0) = 0, \\
      & \Dual{
        \D_{0+}^{\alpha-1}\big(y-\mathcal P_\tau y\big)', w
      }_{{}^0H^{(\alpha-1)/2}(0,T)} = 0
      \quad \forall w \in \mathcal W_\tau,
    \end{aligned}
  \right.
\end{equation}
and define $ \Xi_\tau^\lambda y \in \mathcal W_\tau^\text{c} $ by
\begin{equation}
  \label{eq:def-Pi-wave}
  \left\{
    \begin{aligned}
      & \left(y-\Xi_\tau^\lambda y\right)(0) = 0, \\
      & \Dual{
        \D_{0+}^{\alpha-1}\left(y-\Xi_\tau^\lambda y\right)' +
        \lambda \left(y-\Xi_\tau^\lambda y\right), w
      }_{{}^0H^{(\alpha-1)/2}(0,T)} = 0
      \quad \forall w \in \mathcal W_\tau.
    \end{aligned}
  \right.
\end{equation}

\begin{lemma} 
  \label{lem:love2}
  If $ \alpha-1 < \beta < 1 $ and $ y \in H^{(\alpha+1)/2}(0,T) $, then
  \begin{equation}
    \label{eq:love2}
    (y - \mathcal P_\tau y)(t_m) =
    \Dual{
      \D_{0+}^{\alpha-1-\beta}(Q_\tau - I)y',
      \D_{t_m-}^\beta \mathcal G^m
    }_{(0,t_m)}
  \end{equation}
  for each $ 1 \leqslant m \leqslant J $.
\end{lemma}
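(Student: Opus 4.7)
The identity is the Petrov--Galerkin analog of \cref{lem:love}, so my plan is to retrace that chain of identities for the mixed operator appearing in \cref{eq:def-Ptau}, and then insert one extra duality step at the end to redistribute the fractional derivative according to the parameter $\beta$. The starting point is that $(y-\mathcal P_\tau y)(0)=0$, which yields
\[
(y-\mathcal P_\tau y)(t_m) = \Dual{1,\,(y-\mathcal P_\tau y)'}_{(0,t_m)}.
\]
Since $Q_\tau$ preserves the integral over each mesh interval, this equals $\Dual{1,\,Q_\tau(y-\mathcal P_\tau y)'}_{(0,t_m)}$, and the latter is a legitimate test $w\in\mathcal W_\tau$ in \cref{eq:def-G-wave}, producing $\Dual{Q_\tau(y-\mathcal P_\tau y)',\,\D_{t_m-}^{\alpha-1}\mathcal G^m}_{(0,t_m)}$. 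Because $\mathcal G^m$ vanishes on $(t_m,T)$, I extend the pairing to $(0,T)$ with $\D_{T-}^{\alpha-1}\mathcal G^m$ and apply \cref{lem:dual} to transfer the fractional derivative, arriving at $\Dual{\D_{0+}^{\alpha-1}Q_\tau(y-\mathcal P_\tau y)',\,\mathcal G^m}_{{}^0H^{(\alpha-1)/2}(0,T)}$.

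Next, since $\mathcal G^m\in\mathcal W_\tau$, the defining relation \cref{eq:def-Ptau} of $\mathcal P_\tau$ gives $\Dual{\D_{0+}^{\alpha-1}(y-\mathcal P_\tau y)',\,\mathcal G^m}_{{}^0H^{(\alpha-1)/2}(0,T)}=0$; subtracting converts the first factor into $(Q_\tau-I)(y-\mathcal P_\tau y)'$. Because $(\mathcal P_\tau y)'$ is already piecewise constant on the grid, $(Q_\tau-I)$ annihilates it, so the first factor collapses to $(Q_\tau-I)y'$, yielding the intermediate identity
\[
(y-\mathcal P_\tau y)(t_m) = \Dual{\D_{0+}^{\alpha-1}(Q_\tau-I)y',\,\mathcal G^m}_{{}^0H^{(\alpha-1)/2}(0,T)}.
\]
Finally, writing $\alpha-1=(\alpha-1-\beta)+\beta$ and invoking \cref{lem:dual} once more redistributes the fractional derivative, producing $\Dual{\D_{0+}^{\alpha-1-\beta}(Q_\tau-I)y',\,\D_{T-}^{\beta}\mathcal G^m}_{(0,T)}$, and then restricting back to $(0,t_m)$ via $\mathcal G^m|_{(t_m,T)}=0$ produces the desired \cref{eq:love2}.

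The main subtlety will be the bookkeeping of function-space regularity at each application of \cref{lem:dual}: one has to verify that each factor lies in the Hilbert space that is dual to the one occupied by the other factor. The hypothesis $\alpha-1<\beta<1$ is exactly what makes the last redistribution legitimate: $\beta<1$ keeps $\D_{T-}^{\beta}\mathcal G^m$ an integrable function, since $\mathcal G^m$ is bounded and piecewise constant, while $\beta>\alpha-1$ forces $\alpha-1-\beta<0$, so that $\D_{0+}^{\alpha-1-\beta}$ becomes a smoothing fractional integral acting on the merely $L^2$ function $(Q_\tau-I)y'$. Once these regularity checks are arranged, the rest of the argument is the same algebraic rearrangement used in \cref{lem:love}.
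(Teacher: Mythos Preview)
Your proposal is correct and follows essentially the same route as the paper's proof. The only cosmetic difference is that the paper introduces the nodal interpolant $\mathcal I_\tau y$ as the intermediate object (using $(y-\mathcal P_\tau y)(t_m)=(\mathcal I_\tau y-\mathcal P_\tau y)(t_m)$ and then $(\mathcal I_\tau y - y)'=(Q_\tau-I)y'$), whereas you work directly with $Q_\tau(y-\mathcal P_\tau y)'$; since $(\mathcal I_\tau y)'=Q_\tau y'$, these two presentations are identical.
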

\begin{proof} 
  A straightforward calculation gives
  \begin{small}
  \begin{align*}
    & (y-\mathcal P_\tau y)(t_m) =
    (\mathcal I_\tau y-\mathcal P_\tau y)(t_m) =
    \Dual{(\mathcal I_\tau y-\mathcal P_\tau y)', 1}_{(0,t_m)} \\
    ={} &
    \Dual{
      (\mathcal I_\tau y - \mathcal P_\tau y)', \D_{t_m-}^{\alpha-1} \mathcal G^m
    }_{(0,t_m)}  \qquad\text{(by \cref{eq:def-G-wave})} \\
    ={} &
    \Dual{
      (\mathcal I_\tau y - \mathcal P_\tau y)',
      \D_{t_m-}^{\alpha-1} \mathcal G^m
    }_{(0,T)}  \qquad\text{(by the fact $ \mathcal G^m|_{(t_m,T)}=0$)} \\
    ={} &
    \Dual{
      \D_{0+}^{\alpha-1}(\mathcal I_\tau y - \mathcal P_\tau y)', \mathcal G^m
    }_{{}^0H^{(\alpha-1)/2}(0,T)} \qquad\text{(by \cref{lem:dual})} \\
    ={} & \Dual{
      \D_{0+}^{\alpha-1}(\mathcal I_\tau y - y)', \mathcal G^m
    }_{{}^0H^{(\alpha-1)/2}(0,T)} \qquad\text{(by \cref{eq:def-Ptau}).}
  \end{align*}
  \end{small}
  For any $ \alpha-1 < \beta < 1 $,
  \begin{small}
  \begin{align*}
    & \Dual{
      \D_{0+}^{\alpha-1}(\mathcal I_\tau y - y)',
      \mathcal G^m
    }_{{}^0H^{(\alpha-1)/2}(0,T)} \\
    ={} & \Dual{
      \D_{0+}^\beta \D_{0+}^{\alpha-1-\beta}(\mathcal I_\tau y - y)',
      \mathcal G^m
    }_{{}^0H^{(\alpha-1)/2}(0,T)} \\ 
    ={} & \Dual{
      \D_{0+}^{\alpha-1-\beta}(\mathcal I_\tau y - y)',
      \D_{T-}^\beta \mathcal G^m
    }_{(0,T)} \quad\text{(by \cref{lem:dual})} \\
    ={} & \Dual{
      \D_{0+}^{\alpha-1-\beta}(\mathcal I_\tau y - y)',
      \D_{t_m-}^\beta\mathcal G^m
    }_{(0,t_m)} \quad\text{(by the fact $ \mathcal G^m|_{(t_m,T)}=0 $)} \\
    ={} & \Dual{
      \D_{0+}^{\alpha-1-\beta}(Q_\tau -I)y',
      \D_{t_m-}^\beta\mathcal G^m
    }_{(0,t_m)}.
  \end{align*}
  \end{small}
  Combining the above two equations proves \cref{eq:love2} and hence this lemma.
\end{proof}

For any
\begin{small}
\[
  y \in H^{(\alpha+1)/2}(0,T;X) := \Big\{
    \sum_{n=0}^\infty c_n \phi_n:\
    \sum_{n=0}^\infty \nm{c_n}_{H^{(\alpha+1)/2}(0,T)}^2 < \infty
  \Big\},
\]
\end{small}
define 
\begin{equation}
  \label{eq:def-Ptau-X}
  \mathcal P_\tau^X y := \sum_{n=0}^\infty
  \big( \mathcal P_\tau (y, \phi_n)_X \big) \phi_n.
\end{equation}
\begin{remark}
  By \cref{eq:def-Ptau}, \cref{eq:def-Ptau-X}, \cref{lem:coer} and
  \cref{lem:regu-basic}, we obtain
  \begin{equation}
    \nm{\mathcal P_\tau^X y}_{H^{(\alpha+1)/2}(0,T;X)}
    \leqslant C_{\alpha,T} \nm{y}_{H^{(\alpha+1)/2}(0,T;X)}
  \end{equation}
  for all $ y \in H^{(\alpha+1)/2}(0,T;X) $.
\end{remark}

\begin{lemma} 
  \label{lem:cal-Ptau}
  Assume that $ y \in H^{(\alpha+1)/2}(0,T;X) \cap C^2((0,T];X) $ satisfies
  \[
    t^{-1} \nm{y'(t)}_X + \nm{y''(t)}_X \leqslant
    t^{-r}, \quad 0 < t \leqslant T,
  \]
  where $ 0 < r < 2 $. Then
  \begin{equation}
    \label{eq:cal-Ptau-inf}
    \Nm{
      \left(y-\mathcal P_\tau^X y\right)(t_m)
    }_X \leqslant C_{\alpha,\sigma,r,T}
    J^{-\sigma(2-r)} \max_{1 \leqslant j \leqslant m}
    j^{2\sigma-\sigma r+\alpha-3}
  \end{equation}
  for each $ 1 \leqslant m \leqslant J $.
\end{lemma}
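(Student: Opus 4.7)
\medskip\noindent\textbf{Proof plan for \cref{lem:cal-Ptau}.}
The plan is to parallel the proof of \cref{lem:err-Pi}, exchanging the diffusion-case building blocks for their wave-case analogues: \cref{lem:love,lem:wg} become \cref{lem:love2,lem:G2}, and the elementary $(I-Q_\tau)y$ bound is replaced by \cref{lem:I-Qtau-wave}. As a first step, I would lift the Galerkin-orthogonality identity to the $X$-valued setting: since $\mathcal P_\tau^X$ is defined componentwise in the spectral basis by \cref{eq:def-Ptau-X}, applying \cref{lem:love2} to each scalar component $(y,\phi_n)_X$ and reassembling yields, for every admissible $\beta$ and every $1 \leqslant m \leqslant J$,
\[
(y - \mathcal P_\tau^X y)(t_m) = \Dual{\D_{0+}^{\alpha-1-\beta}(Q_\tau - I)y',\, \D_{t_m-}^\beta \mathcal G^m}_{(0,t_m)},
\]
interpreted as a Bochner integral valued in $X$.

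Second, I would fix $\beta \in (\max\{\alpha-1,1/2\},\, 1)$ so that both \cref{lem:love2} and \cref{lem:G2} apply, split the pairing slab by slab, and use H\"older's inequality after inserting the weight $(j/J)^{\sigma(1-\alpha)}$ on the $L^1$ side and its reciprocal on the $L^\infty$ side:
\begin{align*}
\nm{(y - \mathcal P_\tau^X y)(t_m)}_X
& \leqslant \max_{1 \leqslant j \leqslant m} (J/j)^{\sigma(1-\alpha)}
  \nm{\D_{0+}^{\alpha-1-\beta}(I-Q_\tau)y'}_{L^\infty(t_{j-1},t_j;X)} \\
& \quad\times \sum_{j=1}^m (j/J)^{\sigma(1-\alpha)} \nm{\D_{t_m-}^\beta \mathcal G^m}_{L^1(t_{j-1},t_j)}.
\end{align*}
The second factor is at most $C_{\alpha,\sigma,T}$ by \cref{lem:G2}, which disposes of the Green-function side with exactly the weight required to interface with \cref{lem:I-Qtau-wave}.

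Third, I would bound the $L^\infty$-factor by \cref{lem:I-Qtau-wave}, whose leading rate is $J^{-\sigma(3-\alpha-r)} j^{\sigma(3-\alpha-r)+\alpha-3}$; multiplying by the weight $(J/j)^{\sigma(1-\alpha)}$ yields precisely $J^{-\sigma(2-r)} j^{2\sigma-\sigma r+\alpha-3}$, and taking the maximum over $j$ delivers \cref{eq:cal-Ptau-inf}. The hardest point will be matching fractional orders: \cref{lem:love2} forces $\beta < 1$, while \cref{lem:I-Qtau-wave} is stated for the endpoint order $\alpha-2$ corresponding to $\beta=1$; I expect to handle this either by passing $\beta \nearrow 1$ in the Galerkin identity, or by rerunning the proof of \cref{lem:I-Qtau-wave} with $\D_{0+}^{\alpha-1-\beta}$ in place of $\D_{0+}^{\alpha-2}$, which perturbs only the exponent of the kernel $(a-t)^{1-\alpha}$ and preserves the rate. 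A secondary, purely bookkeeping issue is the case split in \cref{lem:I-Qtau-wave} according to the sign of $\sigma(3-r)-2$; all three subcases will be absorbed by the envelope $\max_j j^{2\sigma-\sigma r+\alpha-3}$ appearing in \cref{eq:cal-Ptau-inf}.
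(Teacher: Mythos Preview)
Your proposal is correct and follows essentially the same route as the paper's own proof: lift \cref{lem:love2} componentwise via the spectral basis (the paper does this explicitly with the Minkowski inequality), insert the weight $(j/J)^{\sigma(1-\alpha)}$, apply \cref{lem:G2} to the Green-function factor, pass to the limit $\beta\to 1{-}$ to match the order in \cref{lem:I-Qtau-wave}, and then read off the rate. Your anticipation of the $\beta\nearrow 1$ issue and the case split in \cref{lem:I-Qtau-wave} matches exactly how the paper handles it.
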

\begin{proof} 
  For each $ n \in \mathbb N $, let
  \[
    y^n(t) := (y(t), \phi_n)_X, \quad 0 \leqslant t \leqslant T.
  \]
  A straightforward calculation gives
  \begin{small}
  \begin{align*}
    & \Nm{(y-\mathcal P_\tau^X y)(t_m)}_X =
    \left(
      \sum_{n=0}^\infty \Snm{
        (y^n - \mathcal P_\tau y^n)(t_m)
      }^2
    \right)^{1/2} \quad\text{(by \cref{eq:def-Ptau-X})} \\
    ={} & \left(
      \sum_{n=0}^\infty \Snm{
        \Dual{
          \D_{0+}^{\alpha-1-\beta}(I-Q_\tau)(y^n)',
          \D_{t_m-}^\beta \mathcal G^m
        }_{(0,t_m)}
      }^2
    \right)^{1/2} \quad\text{(by \cref{lem:love2})} \\
    \leqslant{} & \int_0^{t_m} \!
    \Big( \!
      \sum_{n=0}^\infty \Snm{\D_{0+}^{\alpha-1-\beta}
      (I-Q_\tau)(y^n)'}^2 \snm{\D_{t_m-}^\beta \mathcal G^m}^2 \!
    \Big)^{1/2}  \mathrm{d}t \,
    \text{(by the Minkowski inequality)} \\
    ={} & \int_0^{t_m}
    \Nm{\D_{0+}^{\alpha-1-\beta}(I-Q_\tau)y'}_X
    \snm{\D_{t_m-}^\beta \mathcal G^m} \, \mathrm{d}t,
  \end{align*}
  \end{small}
  for any $ \alpha-1 < \beta < 1 $. From \cref{lem:G2} it follows that
  \begin{small}
  \begin{align*}
    \Nm{\left(y-\mathcal P_\tau^X y\right)(t_m)}_X
    \leqslant{} & \sum_{j=1}^m (J/j)^{\sigma(\alpha-1)}
    \nm{\D_{t_m-}^\beta \mathcal G^m}_{L^1(t_{j-1},t_j)} \\
    & \qquad \times \max_{1\leqslant j\leqslant m}
    (J/j)^{\sigma(1-\alpha)} \nm{\D_{0+}^{\alpha-1-\beta}
    (I-Q_\tau)y' }_{L^\infty(t_{j-1},t_j;X)} \\
    \leqslant{} & C_{\alpha,\sigma,T}
    \max_{1 \leqslant j \leqslant m}
    (J/j)^{\sigma(1-\alpha)} \nm{
      \D_{0+}^{\alpha-1-\beta}(I-Q_\tau)y'
    }_{L^\infty(t_{j-1},t_j;X)}.
  \end{align*}
  \end{small}
  Passing to the limit $ \beta \to {1-} $ then yields
  \begin{small}
  \begin{align*}
    & \Nm{
      \left(y-\mathcal P_\tau^X y\right)(t_m)
    }_X  \leqslant C_{\alpha,\sigma,T}
    \max_{1 \leqslant j \leqslant m}
    \big(J/j\big)^{\sigma(1-\alpha)}
    \nm{
      \D_{0+}^{\alpha-2}(I-Q_\tau) y'
    }_{L^\infty(t_{j-1},t_j;X)},
  \end{align*}
  \end{small}
  so that a straightforward calculation proves \cref{eq:cal-Ptau-inf} by
  \cref{lem:I-Qtau-wave}. This completes the proof.
\end{proof}

\begin{lemma} 
  \label{lem:cal-Ptau-H1}
  Assume that $ y \in H^{(\alpha+1)/2}(0,T;X) \cap C^2((0,T];D(A^{1/2})) $
  satisfies
  \[
    t^{-1} \nm{y'(t)}_{D(A^{1/2})} +
    \nm{y''(t)}_{D(A^{1/2})} \leqslant
    t^{-r}, \quad 0 < t \leqslant T,
  \]
  where $ 0 < r < 2 $. If $ \sigma > (3-\alpha)/(2-r) $, then
  \begin{equation}
    \label{eq:cal-Ptau-H1}
    \nm{(I-\mathcal P_\tau)y}_{L^{2/\alpha}(0,T;D(A^{1/2}))}
    \leqslant C_{\alpha,\sigma,r,T} J^{\alpha-3}.
  \end{equation}
\end{lemma}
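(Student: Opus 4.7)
The plan is to upgrade the pointwise-at-grid-points bound from \cref{lem:cal-Ptau} to the required $L^{2/\alpha}$ estimate in $D(A^{1/2})$, exploiting the fact that $\mathcal P_\tau^X$ commutes with $A^{1/2}$ (both act diagonally in the eigenbasis $\{\phi_n\}$). First, I apply \cref{lem:cal-Ptau} to the function $A^{1/2}y$, which by hypothesis satisfies the $X$-valued regularity $t^{-1}\nm{(A^{1/2}y)'}_X + \nm{(A^{1/2}y)''}_X \leqslant t^{-r}$. This yields
\[
  \nm{(y-\mathcal P_\tau^X y)(t_m)}_{D(A^{1/2})}
  \leqslant C_{\alpha,\sigma,r,T} J^{-\sigma(2-r)}
  \max_{1\leqslant j\leqslant m} j^{2\sigma-\sigma r+\alpha-3}.
\]
Under $\sigma > (3-\alpha)/(2-r)$, the exponent $\sigma(2-r)+\alpha-3$ is strictly positive, so the maximum is attained at $j=m$, giving the nodal bound $B_m := C J^{-\sigma(2-r)} m^{\sigma(2-r)+\alpha-3}$.

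Next, to pass from nodal values to an $L^{2/\alpha}$ estimate, I decompose
\[
  y - \mathcal P_\tau y = (y - \mathcal I_\tau y) + (\mathcal I_\tau y - \mathcal P_\tau y).
\]
The second piece is piecewise linear, and since $(\mathcal I_\tau y - \mathcal P_\tau y)(t_j) = (y - \mathcal P_\tau y)(t_j)$ at every node (with vanishing value at $t_0=0$), its $L^\infty(t_{m-1},t_m;D(A^{1/2}))$-norm is bounded by $B_m$. Hence its contribution to $\nm{\cdot}_{L^{2/\alpha}}^{2/\alpha}$ is at most $\sum_m \tau_m B_m^{2/\alpha}$ with $\tau_m \approx T\sigma m^{\sigma-1}/J^\sigma$. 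A direct exponent count gives $\sum_m \tau_m B_m^{2/\alpha} \leqslant C J^{2(\alpha-3)/\alpha}$, and raising to the $\alpha/2$ power yields exactly $J^{\alpha-3}$.

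For the interpolation-error piece $y-\mathcal I_\tau y$, I use standard piecewise-linear bounds: on $(t_{m-1},t_m)$ with $m\geqslant 2$, we have $\nm{y-\mathcal I_\tau y}_{L^\infty(t_{m-1},t_m;D(A^{1/2}))} \leqslant C\tau_m^2 t_{m-1}^{-r}$ via the $y''$-bound, while on $(0,t_1)$ the $y'$-bound gives $C t_1^{2-r}$. Summing these raised to $2/\alpha$ multiplied by $\tau_m$ produces a contribution of order $J^{-2}$, which is dominated by $J^{\alpha-3}$ since $\alpha<2$. Combining the two pieces completes the proof.

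The main obstacle will be the exponent bookkeeping for the piecewise-linear piece: one must verify that the hypothesis $\sigma > (3-\alpha)/(2-r)$ simultaneously ensures that the nodal maximum is realized at the endpoint $j=m$ and that the power $p := \sigma - 1 + 2(\sigma(2-r)+\alpha-3)/\alpha$ on $m$ satisfies $p > -1$, so that $\sum_{m=1}^{J} m^p \lesssim J^{p+1}$. A direct algebraic check confirms that $\sigma > (3-\alpha)/(2-r) > 2(3-\alpha)/(\alpha+4-2r)$, which is precisely the threshold guaranteeing $p>-1$, so the summation indeed telescopes to the advertised rate $J^{\alpha-3}$.
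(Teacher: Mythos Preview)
Your proposal is correct and takes essentially the same approach as the paper: apply the $D(A^{1/2})$-analogue of \cref{lem:cal-Ptau}, split $y-\mathcal P_\tau^X y=(y-\mathcal I_\tau y)+(\mathcal I_\tau y-\mathcal P_\tau^X y)$, and handle each piece separately. The paper's only simplification is to bound the piecewise-linear piece directly in $L^\infty(0,T;D(A^{1/2}))$ by the uniform nodal bound $C J^{\alpha-3}$ and then pass to $L^{2/\alpha}$ via finite measure, bypassing your per-interval summation and the auxiliary exponent check $p>-1$.
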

\begin{proof} 
  A simple modification of the proof of \cref{eq:cal-Ptau-inf} yields
  \begin{equation}
    \max_{1 \leqslant m \leqslant J }
    \nm{(y-\mathcal P_\tau y)(t_m)}_{D(A^{1/2})}
    \leqslant C_{\alpha,\sigma,r,T} J^{\alpha-3},
  \end{equation}
  which implies
  \[
    \nm{
      (\mathcal I_\tau - \mathcal P_\tau)y
    }_{L^\infty(0,T;D(A^{1/2}))}
    \leqslant C_{\alpha,\sigma,r,T} J^{\alpha-3}.
  \]
  It follows that
  \[
    \nm{
      (\mathcal I_\tau - \mathcal P_\tau)y
    }_{L^{2/\alpha}(0,T;D(A^{1/2}))}
    \leqslant C_{\alpha,\sigma,r,T} J^{\alpha-3}.
  \]
  In addition, a routine calculation gives
  \[
    \nm{(I-\mathcal I_\tau)y}_{
      L^{2/\alpha}(0,T;D(A^{1/2}))
    } \leqslant C_{\alpha,\sigma,r,T} J^{-2}.
  \]
  Combining the above two estimates proves \cref{eq:cal-Ptau-H1} and hence this
  lemma.
\end{proof}

\begin{lemma} 
  \label{lem:I-Pi-wave}
  If $ y \in H^{(\alpha+1)/2}(0,T) $, then
  \begin{equation}
    \label{eq:I-Pi-wave}
    \Snm{
      \left( y-\Xi_\tau^\lambda y \right)(t_m)
    } \leqslant C_{\alpha,T} \left(
      \snm{(y-\mathcal P_\tau y)(t_m)} +
      \lambda^{1/2} \nm{(I-\mathcal P_\tau)y}_{L^{2/\alpha}(0,t_m)}
    \right)
  \end{equation}
  for each $ 1 \leqslant m \leqslant J $. 
\end{lemma}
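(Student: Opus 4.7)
The plan is to control the discrete correction $e := \Xi_\tau^\lambda y - \mathcal P_\tau y$, which lies in $\mathcal W_\tau^\text{c}$ and vanishes at $t=0$, by a coercivity-based energy estimate and then exploit the triangle split
\[
  (y - \Xi_\tau^\lambda y)(t_m) = (y - \mathcal P_\tau y)(t_m) - e(t_m).
\]
Subtracting \cref{eq:def-Ptau} from \cref{eq:def-Pi-wave} gives, for every $w \in \mathcal W_\tau$,
\begin{equation*}
  \Dual{\D_{0+}^{\alpha-1} e' + \lambda e, w}_{{}^0H^{(\alpha-1)/2}(0,T)}
  = \lambda \Dual{y - \mathcal P_\tau y, w}_{(0,T)}.
\end{equation*}

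The crucial observation is that $e \in \mathcal W_\tau^\text{c}$ forces $e'$ to be piecewise constant on the grid, so $e'\chi_{(0,t_m)}$ is itself an admissible test function in $\mathcal W_\tau$ (since $t_m$ is a grid node). Inserting $w = e'\chi_{(0,t_m)}$, the $\lambda \langle e, w\rangle$ contribution collapses to $\tfrac{\lambda}{2}\, e(t_m)^2$ via $\int_0^{t_m} e\, e'\, \mathrm{d}t = \tfrac{1}{2} e(t_m)^2$, while the positivity of the Riemann--Liouville operator (essentially the ingredient in \cref{lem:coer}, available because $(\alpha-1)/2 \in (0,1/2)$) yields
\begin{equation*}
  \int_0^{t_m}\big(\D_{0+}^{\alpha-1} e'\big)(t)\, e'(t)\,\mathrm{d}t
  \;\geq\; c_\alpha\,\Nm{e'}_{H^{(\alpha-1)/2}(0,t_m)}^2.
\end{equation*}

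On the right-hand side, the duality pairing together with Young's inequality produces
\begin{equation*}
  \lambda \int_0^{t_m}(y - \mathcal P_\tau y)\, e'\,\mathrm{d}t
  \;\leq\; C_\alpha\, \lambda^2\, \Nm{y - \mathcal P_\tau y}_{H^{-(\alpha-1)/2}(0,t_m)}^2
  + c_\alpha\, \Nm{e'}_{H^{(\alpha-1)/2}(0,t_m)}^2,
\end{equation*}
so the $\|e'\|^2$ term is absorbed into the coercive left-hand piece, leaving
\begin{equation*}
  e(t_m)^2 \;\leq\; C_\alpha\, \lambda\, \Nm{y - \mathcal P_\tau y}_{H^{-(\alpha-1)/2}(0,t_m)}^2.
\end{equation*}
To conclude, I would upgrade to the $L^{2/\alpha}$ norm through the Sobolev embedding $L^{2/\alpha}(0,t_m) \hookrightarrow H^{-(\alpha-1)/2}(0,t_m)$, which is dual to the standard embedding $H^{(\alpha-1)/2}(0,t_m) \hookrightarrow L^{2/(2-\alpha)}(0,t_m)$ and valid because $(\alpha-1)/2 \in (0,1/2)$; combined with the triangle decomposition, this delivers \cref{eq:I-Pi-wave}.

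The main technical nuisance is establishing the positivity estimate rigorously for the merely piecewise-constant function $e'$ as $\alpha$ approaches $2$, where the exponent $(\alpha-1)/2$ approaches the critical value $1/2$; this should be handled by a density argument together with the norm equivalence of the various fractional spaces for $(\alpha-1)/2 < 1/2$ noted in \cref{sec:pre}. The rest is a standard coercivity/duality argument, so no further obstacles are anticipated.
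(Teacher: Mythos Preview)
Your proposal is correct and follows essentially the same route as the paper: set $\theta = e = (\Xi_\tau^\lambda - \mathcal P_\tau)y$, test the difference of \cref{eq:def-Ptau} and \cref{eq:def-Pi-wave} against $\theta'\chi_{(0,t_m)} \in \mathcal W_\tau$, use \cref{lem:coer} for coercivity of the fractional term and $\int_0^{t_m}\theta\theta' = \tfrac12\theta(t_m)^2$, and bound the right-hand side by the embedding ${}_0H^{(\alpha-1)/2}(0,t_m)\hookrightarrow L^{2/(2-\alpha)}(0,t_m)$ (which you phrase dually as $L^{2/\alpha}\hookrightarrow H^{-(\alpha-1)/2}$) together with Young's inequality; the final triangle split is identical. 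The ``technical nuisance'' you flag is not an obstacle, since $\theta'$ is piecewise constant and hence lies in ${}_0H^{(\alpha-1)/2}(0,t_m)$ for every $\alpha<2$.
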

\begin{proof} 
  Letting $ \theta := (\Xi_\tau^\lambda - \mathcal P_\tau) y $, by
  \cref{eq:def-Ptau}, \cref{eq:def-Pi-wave,lem:dual} we obtain
  \[
    \Dual{
      \D_{0+}^{\alpha-1}\theta', \theta'
    }_{(0,t_m)} + \lambda \Dual{\theta, \theta'}_{(0,t_m)} =
    \lambda \Dual{y-\mathcal P_\tau y, \theta'}_{(0,t_m)},
  \]
  so that using \cref{lem:coer,lem:regu-basic} and integration by parts yields
  \begin{align*}
    \nm{\theta'}_{{}_0H^{(\alpha-1)/2}(0,t_m)}^2 +
    \lambda \snm{\theta(t_m)}^2 \leqslant
    C_\alpha \lambda \nm{(I-\mathcal P_\tau)y}_{L^{2/\alpha}(0,t_m)}
    \nm{\theta'}_{L^{2/(2-\alpha)}(0,t_m)}.
  \end{align*}
  Since
  \[
    \nm{\theta'}_{L^{2/(2-\alpha)}(0,t_m)}
    \leqslant C_{\alpha,T} \nm{\theta'}_{{}_0H^{(\alpha-1)/2}(0,t_m)},
  \]
  it follows that
  \begin{equation}
    \label{eq:theta}
    \snm{\theta(t_m)} \leqslant C_{\alpha,T}
    \lambda^{1/2} \nm{(I-\mathcal P_\tau)y}_{L^{2/\alpha}(0,t_m)}.
  \end{equation}
  Hence, \cref{eq:I-Pi-wave} follows from the triangle inequality
  \[
    \Snm{
      \left( y-\Xi_\tau^\lambda y \right)(t_m)
    } \leqslant
    \Snm{\theta(t_m)} + \Snm{(y-\mathcal P_\tau y)(t_m)}.
  \]
  This completes the proof.
\end{proof}

\medskip\noindent {\bf Proof of \cref{thm:conv-wave}.} For each $ n \in \mathbb
N $, let
\[
  u^n(t) := (u(t), \phi_n)_X, \quad 0 < t \leqslant T.
\]
By \cref{eq:U-wave}, \cref{eq:u-weak-wave}, \cref{eq:def-Pi-wave} and
\cref{lem:dual}, we have
\[
  U = \sum_{n=0}^\infty (\Xi_\tau^{\lambda_n} u^n) \phi_n,
\]
so that
\begin{small}
\begin{align*}
  & \nm{(u-U)(t_m)}_X =
  \left(
    \sum_{n=0}^\infty \snm{(u^n - \Xi_\tau^{\lambda_n} u^n)(t_m)}^2
  \right)^{1/2} \\
  \leqslant{} & C_{\alpha,T} \Big(
    \nm{(u-\mathcal P_\tau^X u)(t_m)}_X + \Big(
      \sum_{n=0}^\infty \lambda_n
      \nm{(I-\mathcal P_\tau) u^n)}_{
        L^{2/\alpha}(0,t_m)
      }^2
    \Big)^{1/2}
  \Big) \quad\text{(by \cref{eq:I-Pi-wave}).}
\end{align*}
\end{small}
Applying the Minkowski inequality gives
\[
  \Big(
    \sum_{n=0}^\infty \lambda_n
    \nm{(I-\mathcal P_\tau)u^n)}_{L^{2/\alpha}(0,t_m)}^2
  \Big)^{1/2}
  \leqslant \nm{(I-\mathcal P_\tau^X)u}_{
    L^{2/\alpha}(0,t_m;D(A^{1/2}))
  }.
\]
The above two estimates yield
\begin{small}
\[
  \nm{(u-U)(t_m)}_X \leqslant
  C_{\alpha,T} \left(
    \nm{(u-\mathcal P_\tau^X u)(t_m)}_X +
    \nm{(I-\mathcal P_\tau^X)u}_{
      L^{2/\alpha}(0,t_m;D(A^{1/2}))
    }
  \right).
\]
\end{small}
In addition, using \cref{eq:u'-l2-growth}, \cref{eq:sigma-cond-wave} and
\cref{lem:cal-Ptau} gives
\[
  \nm{(u-\mathcal P_\tau^X u)(t_m)}_X \leqslant
  C_{\alpha,\sigma,\nu,T} J^{\alpha-3}
  \nm{u_0}_{D(A^\nu)},
\]
and using \cref{eq:u'-h1-growth}, \cref{eq:sigma-cond-wave} and
\cref{lem:cal-Ptau-H1} shows
\[
  \nm{(I-\mathcal P_\tau^X)u}_{L^{2/\alpha}(0,T;D(A^{1/2}))}
  \leqslant C_{\alpha,\sigma,\nu,T} J^{\alpha-3}
  \nm{u_0}_{D(A^\nu)}.
\]
Finally, combining the above three estimates proves \cref{eq:conv-wave-inf} and
thus concludes the proof.
\hfill\ensuremath{\blacksquare}

\section{Numerical experiments}
\label{sec:numer}
This section performs two numerical experiments to verify
\cref{thm:conv-diffu,thm:conv-wave}, respectively, in the following settings: 
\begin{small}
$$\left\{\begin{array}{l}
T= 1 ;\\
  X := \big\{
    w \in H_0^1(0,1):\ w \text{ is linear on }
    \big( (m-1)/2^{11}, m/2^{11} \big) \,\,
    \text{for all $ 1 \leqslant m \leqslant 2^{11} $}
  \big\};
\\
 A: X \to X \text{ is defined by that, for any }  v \in X ,
  \int_0^1 (Av) w  = -\int_0^1 v' w'
  \quad\forall w \in X.
\end{array}
\right.
$$
\end{small}

\medskip\noindent{\bf Experiment 1.} The purpose of this experiment is to verify
\cref{thm:conv-diffu}. Let $ u_0 $ be the $ L^2 $-orthogonal projection of $
x^{0.51}(1-x) $, $ 0 < x < 1 $, onto $ X $. Define
\[
  \mathcal E_1 := \nm{U^*-U}_{L^\infty(0,T;L^2(0,1))},
\]
where $ U^* $ is the numerical solution of discretization \cref{eq:U-diffu} with
$ J = 2^{15} $ and $ \sigma = 2/\alpha $. Clearly, regarding $ \nu $ as $ 0.5 $
is reasonable. The numerical results in \cref{tab:0.2,tab:0.5,tab:0.8}
illustrate that $ \mathcal E_1 $ is close to $ O(J^{-\min\{\sigma \alpha/2,1\} }) $, which agrees well with the  estimate \cref{eq:conv-diffu-inf} in \cref{thm:conv-diffu}.

\begin{table}[H]
  \footnotesize \setlength{\tabcolsep}{2pt}
  \caption{$ \alpha=0.2 $}
  \label{tab:0.2}
  \begin{tabular}{ccccccc}
    \toprule &
    \multicolumn{2}{c}{$\sigma=1$} &
    \multicolumn{2}{c}{$\sigma=5$} &
    \multicolumn{2}{c}{$\sigma=10$} \\
    \cmidrule(r){2-3} \cmidrule(r){4-5} \cmidrule(r){6-7}
    $J$ & $\mathcal E_1$ & Order & $\mathcal E_1$ & Order & $\mathcal E_1$ & Order \\
    $2^{9}$  & 2.12e-1 & --   & 1.60e-2 & --   & 6.58e-4 & --   \\
    $2^{10}$ & 2.05e-1 & 0.05 & 1.09e-2 & 0.55 & 3.23e-4 & 1.03 \\
    $2^{11}$ & 1.97e-1 & 0.06 & 7.54e-3 & 0.54 & 1.58e-4 & 1.03 \\
    $2^{12}$ & 1.89e-1 & 0.06 & 5.23e-3 & 0.53 & 7.64e-5 & 1.05 \\
    \bottomrule
  \end{tabular}
\end{table}
\begin{table}[H]
  \footnotesize \setlength{\tabcolsep}{2pt}
  \caption{$ \alpha=0.5 $}
  \label{tab:0.5}
  \begin{tabular}{ccccccc}
    \toprule &
    \multicolumn{2}{c}{$\sigma=1$} &
    \multicolumn{2}{c}{$\sigma=2$} &
    \multicolumn{2}{c}{$\sigma=4$} \\
    \cmidrule(r){2-3} \cmidrule(r){4-5} \cmidrule(r){6-7}
    $J$ & $\mathcal E_1$ & Order & $\mathcal E_1$ & Order & $\mathcal E_1$ & Order \\
    $2^{7}$  & 1.36e-1 & -- & 3.42e-2 & -- & 3.02e-3 & -- \\
    $2^{8}$  & 1.14e-1 & 0.25 & 2.29e-2 & 0.58 & 1.45e-3 & 1.06 \\
    $2^{9}$  & 9.47e-2 & 0.27 & 1.55e-2 & 0.56 & 7.04e-4 & 1.04 \\
    $2^{10}$ & 7.76e-2 & 0.29 & 1.06e-2 & 0.55 & 3.43e-4 & 1.04 \\
    \bottomrule
  \end{tabular}
\end{table}

\begin{table}[H]
  \footnotesize \setlength{\tabcolsep}{2pt}
  \caption{$ \alpha=0.8 $}
  \label{tab:0.8}
  \begin{tabular}{ccccccc}
    \toprule &
    \multicolumn{2}{c}{$\sigma=1$} &
    \multicolumn{2}{c}{$\sigma=2$} &
    \multicolumn{2}{c}{$\sigma=2.5$} \\
    \cmidrule(r){2-3} \cmidrule(r){4-5} \cmidrule(r){6-7}
    $J$ & $\mathcal E_1$ & Order & $\mathcal E_1$ & Order & $\mathcal E_1$ & Order \\
    $2^{7}$  & 6.20e-2 & --   & 6.95e-3 & --   & 3.77e-3 & --   \\
    $2^{8}$  & 4.46e-2 & 0.48 & 3.89e-3 & 0.84 & 1.82e-3 & 1.05 \\
    $2^{9}$  & 3.22e-2 & 0.47 & 2.19e-3 & 0.83 & 8.81e-4 & 1.05 \\
    $2^{10}$ & 2.34e-2 & 0.46 & 1.24e-3 & 0.82 & 4.26e-4 & 1.05 \\
    \bottomrule
  \end{tabular}
\end{table}

\medskip\noindent{\bf Experiment 2.} The purpose of this experiment is to verify
\cref{thm:conv-wave}. Let $ u_0 $ be the $ L^2 $-orthogonal projection of $
x^{1.51}(1-x)^2 $, $ 0 < x < 1 $, onto $ X $. Let
\[
  \mathcal E_2 := \max_{1 \leqslant j \leqslant j}
  \nm{(U^*-U)(t_j)}_{L^2(\Omega)},
\]
where $ U^* $ is the numerical solution of discretization \cref{eq:U-wave} with
$ J=2^{15} $ and $ \sigma=2(3-\alpha)/\alpha $. Evidently, regarding $ u_0 \in
D(A) $ is reasonable. The numerical results in \cref{tab:wave} clearly
demonstrate that $ \mathcal E_2 $ is close to $ O(J^{\alpha-3}) $, which
agrees well with \cref{thm:conv-wave}.

\begin{table}[H]
  \footnotesize \setlength{\tabcolsep}{2pt}
  \caption{$ \sigma=2(3-\alpha)/\alpha $}
  \label{tab:wave}
  \begin{tabular}{ccccccc}
    \toprule &
    \multicolumn{2}{c}{$\alpha=1.2$} &
    \multicolumn{2}{c}{$\alpha=1.5$} &
    \multicolumn{2}{c}{$\alpha=1.8$} \\
    \cmidrule(r){2-3} \cmidrule(r){4-5} \cmidrule(r){6-7}
    $J$ & $\mathcal E_2$ & Order & $\mathcal E_2$ & Order & $\mathcal E_2$ & Order \\
    $2^{6}$ & 2.44e-5 & --   & 1.27e-4 & --   & 7.97e-4 & --   \\
    $2^{7}$ & 6.81e-6 & 1.84 & 4.58e-5 & 1.47 & 3.57e-4 & 1.16 \\
    $2^{8}$ & 1.90e-6 & 1.84 & 1.65e-5 & 1.47 & 1.57e-4 & 1.18 \\
    $2^{9}$ & 5.35e-7 & 1.83 & 5.97e-6 & 1.47 & 6.87e-5 & 1.20 \\
    \bottomrule
  \end{tabular}
\end{table}

\section{Conclusions} 
For the fractional evolution equation, we have analyzed a low-order discontinuous
Galerkin (DG) discretization with fractional order $ 0 < \alpha < 1 $ and a
low-order Petrov Galerkin (PG) discretization with fractional order $ 1 < \alpha
< 2 $.  When using   uniform temporal grids, the two discretizations are equivalent to
the L1 scheme with $ 0 < \alpha < 1 $ and the L1 scheme with $ 1 < \alpha < 2 $,
respectively.  For the DG discretization with graded temporal grids,  sharp
error estimates are rigorously established for smooth and nonsmooth initial data.
For the PG discretization, the optimal $ (3-\alpha) $-order temporal accuracy is
derived on appropriately graded temporal grids. The theoretical results have been verified by   numerical results.

However, our analysis of the PG discretization requires $ u_0 \in D(A^\nu) $
with $ 1/2 < \nu \leqslant 1 $. Hence, how to analyze the case $ 0 < \nu
\leqslant 1/2 $ remains an open problem. It appears that the results and
techniques developed in this paper can be used to analyze the semilinear
fractional diffusion-wave equations with graded temporal grids, and this is our
ongoing work.

\section*{Acknowledgements}

Binjie Li was supported in part by the National Natural Science Foundation of China (NSFC) Grant No. 11901410, Xiaoping Xie was supported in part by the National Natural Science Foundation of China (NSFC) Grant No. 11771312, and Tao Wang was supported in part by the China Postdoctoral Science Foundation (CPSF) Grant No. 2019M66294.

\clearpage

\appendix
\section{Properties of fractional calculus operators}
\begin{lemma}
  \label{lem:coer}
  For any $ v \in {}_0H^\gamma(a,b) $ with $ 0 < \gamma < 1/2 $,
  \begin{align*}
    \cos(\gamma\pi) \nm{\D_{a+}^\gamma v}_{L^2(a,b)}^2 \leqslant
    \Dual{ \D_{a+}^\gamma v, \D_{b-}^\gamma v }_{(a,b)} \leqslant
    \sec(\gamma\pi) \nm{\D_{a+}^\gamma v}_{L^2(a,b)}^2, \\
    \cos(\gamma\pi) \nm{\D_{b-}^\gamma v}_{L^2(a,b)}^2 \leqslant
    \Dual{ \D_{a+}^\gamma v, \D_{b-}^\gamma v}_{(a,b)} \leqslant
    \sec(\gamma\pi) \nm{\D_{b-}^\gamma v}_{L^2(a,b)}^2.
  \end{align*}
\end{lemma}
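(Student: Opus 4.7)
The plan is to use the Fourier transform approach originating from Ervin--Roop (cited as [Ervin2006]). Because $0<\gamma<1/2$, the preliminaries give ${}_0H^\gamma(a,b)=H^\gamma(a,b)$, so by density it suffices to prove the statement for $v\in C_c^\infty(a,b)$. Such a $v$ can be extended by zero to $\widetilde v\in H^\gamma(\mathbb R)$, and on this extension the Riemann--Liouville operators $\D_{a+}^\gamma$ and $\D_{b-}^\gamma$ agree with the Liouville fractional derivatives $\D_+^\gamma$ and $\D_-^\gamma$ on the whole line. In addition, both $\D_{a+}^\gamma\widetilde v$ and $\D_{b-}^\gamma\widetilde v$ lie in $L^2(\mathbb R)$ and the $L^2(a,b)$ norms in the statement coincide with their $L^2(\mathbb R)$ norms (since $v$ is supported in $(a,b)$ and smoothness lets the convolution tails behave).

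Next I would pass to Fourier space. Choosing the principal branches, one has $\widehat{\D_+^\gamma \widetilde v}(\xi)=(i\xi)^\gamma\hat v(\xi)$ and $\widehat{\D_-^\gamma\widetilde v}(\xi)=(-i\xi)^\gamma\hat v(\xi)$, where
$(\pm i\xi)^\gamma=|\xi|^\gamma\exp\bigl(\pm i\tfrac{\gamma\pi}{2}\operatorname{sgn}\xi\bigr).$
Plancherel's identity for the real pairing gives
$\Dual{\D_{a+}^\gamma v,\D_{b-}^\gamma v}_{(a,b)}=\int_{\mathbb R}(i\xi)^\gamma\overline{(-i\xi)^\gamma}\,|\hat v(\xi)|^2\,d\xi,$
and a direct computation yields $(i\xi)^\gamma\overline{(-i\xi)^\gamma}=|\xi|^{2\gamma}e^{i\gamma\pi\operatorname{sgn}\xi}$. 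Since $v$ is real, $|\hat v(\xi)|^2$ is even, so the imaginary (odd) part integrates to zero, leaving
$\Dual{\D_{a+}^\gamma v,\D_{b-}^\gamma v}_{(a,b)}=\cos(\gamma\pi)\int_{\mathbb R}|\xi|^{2\gamma}|\hat v(\xi)|^2\,d\xi.$

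At the same time, Plancherel gives $\nm{\D_{a+}^\gamma v}_{L^2(a,b)}^2=\nm{\D_{b-}^\gamma v}_{L^2(a,b)}^2=\int_{\mathbb R}|\xi|^{2\gamma}|\hat v(\xi)|^2\,d\xi$. Combining these identities yields the sharper equality $\Dual{\D_{a+}^\gamma v,\D_{b-}^\gamma v}_{(a,b)}=\cos(\gamma\pi)\nm{\D_{a+}^\gamma v}_{L^2(a,b)}^2=\cos(\gamma\pi)\nm{\D_{b-}^\gamma v}_{L^2(a,b)}^2$, from which both pairs of inequalities of the lemma follow immediately (the upper bounds are trivially weaker since $\cos(\gamma\pi)\le\sec(\gamma\pi)$). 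A density argument then extends the estimates from $C_c^\infty(a,b)$ to all of ${}_0H^\gamma(a,b)$, using the continuity of $\D_{a+}^\gamma$ and $\D_{b-}^\gamma$ from ${}_0H^\gamma(a,b)$ into $L^2(a,b)$.

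The main technical point, rather than an outright obstacle, is the clean identification of the zero-extended Riemann--Liouville derivative with the full-line Liouville derivative in the fractional-order Sobolev setting. For $0<\gamma<1/2$ this is standard because ${}_0H^\gamma(a,b)=H^\gamma(a,b)$ and the extension by zero is continuous into $H^\gamma(\mathbb R)$; for $\gamma\ge 1/2$ the story would be more delicate, but that range is excluded by hypothesis, so the Fourier argument goes through without modification.
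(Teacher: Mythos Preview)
Your overall strategy---zero extension, Fourier transform, Plancherel---is exactly the Ervin--Roop argument that the paper points to (the paper does not write out a proof; it simply cites \cite{Ervin2006}). So the route is right, but one step is wrong and it breaks your ``sharper equality''.

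The error is the claim that $\nm{\D_{a+}^\gamma v}_{L^2(a,b)}=\nm{\D_+^\gamma\widetilde v}_{L^2(\mathbb R)}$. For $v\in C_c^\infty(a,b)$ the function $\D_+^\gamma\widetilde v$ is \emph{not} supported in $(a,b)$: for $t>b$ one has
\[
(\D_+^\gamma\widetilde v)(t)=\frac{\mathrm d}{\mathrm dt}\,\frac1{\Gamma(1-\gamma)}\int_a^b (t-s)^{-\gamma}v(s)\,\mathrm ds,
\]
which is generically nonzero (it decays like $t^{-1-\gamma}$). Hence only the inequality $\nm{\D_{a+}^\gamma v}_{L^2(a,b)}\le\nm{\D_+^\gamma\widetilde v}_{L^2(\mathbb R)}$ holds, with strict inequality in general, and your claimed identity $\Dual{\D_{a+}^\gamma v,\D_{b-}^\gamma v}_{(a,b)}=\cos(\gamma\pi)\nm{\D_{a+}^\gamma v}_{L^2(a,b)}^2$ is false. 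What \emph{is} true is that the cross pairing on $(a,b)$ equals the cross pairing on $\mathbb R$ (because $\D_+^\gamma\widetilde v$ vanishes on $(-\infty,a)$ and $\D_-^\gamma\widetilde v$ vanishes on $(b,\infty)$), so your Fourier computation correctly gives
\[
\Dual{\D_{a+}^\gamma v,\D_{b-}^\gamma v}_{(a,b)}
=\cos(\gamma\pi)\int_{\mathbb R}|\xi|^{2\gamma}|\hat v(\xi)|^2\,\mathrm d\xi
=\cos(\gamma\pi)\,\nm{\D_+^\gamma\widetilde v}_{L^2(\mathbb R)}^2.
\]

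With this corrected, the lower bounds in the lemma follow immediately from $\nm{\D_{a+}^\gamma v}_{L^2(a,b)}\le\nm{\D_+^\gamma\widetilde v}_{L^2(\mathbb R)}$ (and the analogous inequality for $\D_{b-}^\gamma$). For the upper bounds you need one more step: from the identity above, $\nm{\D_{b-}^\gamma v}_{L^2(a,b)}^2\le\nm{\D_-^\gamma\widetilde v}_{L^2(\mathbb R)}^2=\sec(\gamma\pi)\,\Dual{\D_{a+}^\gamma v,\D_{b-}^\gamma v}_{(a,b)}$, and combining this with Cauchy--Schwarz on $(a,b)$ gives
\[
\Dual{\D_{a+}^\gamma v,\D_{b-}^\gamma v}_{(a,b)}
\le\nm{\D_{a+}^\gamma v}_{L^2(a,b)}\nm{\D_{b-}^\gamma v}_{L^2(a,b)}
\le\nm{\D_{a+}^\gamma v}_{L^2(a,b)}\bigl(\sec(\gamma\pi)\,\Dual{\D_{a+}^\gamma v,\D_{b-}^\gamma v}_{(a,b)}\bigr)^{1/2},
\]
which yields $\Dual{\D_{a+}^\gamma v,\D_{b-}^\gamma v}_{(a,b)}\le\sec(\gamma\pi)\nm{\D_{a+}^\gamma v}_{L^2(a,b)}^2$. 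The density argument you describe then finishes the job.
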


\begin{lemma}
  \label{lem:regu-basic}
  For any $ v \in {}_0H^\gamma(a,b) $ and $ w \in {}^0H^\gamma(a,b) $ with $ 0 <
  \gamma < \infty $,
  \begin{small}
  \begin{align*}
    C_1 \nm{v}_{{}_0H^\gamma(a,b)} \leqslant
    \nm{\D_{a+}^\gamma v}_{L^2(a,b)} \leqslant
    C_2 \nm{v}_{{}_0H^\gamma(a,b)}, \\
    C_1 \nm{w}_{{}^0H^\gamma(a,b)} \leqslant
    \nm{\D_{b-}^\gamma w}_{L^2(a,b)} \leqslant
    C_2 \nm{w}_{{}^0H^\gamma(a,b)},
  \end{align*}
  \end{small}
  where $ C_1 $ and $ C_2 $ are two positive constants depending only on $
  \gamma $.
\end{lemma}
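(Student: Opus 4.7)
The plan is to prove the norm-equivalence by handling integer orders by direct computation and then transporting the estimates to non-integer orders via K-method interpolation, with the Riemann--Liouville fractional integral supplying the reverse bound. Since the two displayed estimates are symmetric under $t \mapsto a+b-t$, I would prove only the first pair and deduce the second by reflection.

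First I would settle the integer case. If $\gamma = m \in \mathbb{N}$ and $v \in {}_0H^m(a,b)$, then the definition $\D_{a+}^m v := \D^m \D_{a+}^{0} v = v^{(m)}$ together with the convention $\nm{v}_{{}_0H^m(a,b)} = \nm{v^{(m)}}_{L^2(a,b)}$ yields exact equality $\nm{\D_{a+}^m v}_{L^2(a,b)} = \nm{v}_{{}_0H^m(a,b)}$, so $C_1 = C_2 = 1$ suffices.

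Next, for $0 < \gamma < 1$, I would prove the upper bound by interpolation. The map $v \mapsto \D_{a+}^\gamma v$ is bounded $L^2(a,b) \to L^2(a,b)$ at $\gamma = 0$ and bounded ${}_0H^1(a,b) \to L^2(a,b)$ at $\gamma = 1$, and on the dense subspace $C^\infty_c(a,b)$ the values $\D_{a+}^\gamma v$ depend continuously (even analytically) on $\gamma$, so the family is compatible in the sense required by the standard K-method interpolation theorem. Since ${}_0H^\gamma(a,b) = (L^2(a,b), {}_0H^1(a,b))_{\gamma,2}$ by definition, this yields $\nm{\D_{a+}^\gamma v}_{L^2(a,b)} \leqslant C_\gamma \nm{v}_{{}_0H^\gamma(a,b)}$. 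For the reverse bound, I would use that the fractional integral $\D_{a+}^{-\gamma}$ is bounded $L^2(a,b) \to L^2(a,b)$ (its kernel $(t-s)^{\gamma-1}/\Gamma(\gamma)$ is in $L^1$) and also bounded $L^2(a,b) \to {}_0H^1(a,b)$ at $\gamma = 1$ since $\D \D_{a+}^{-1} = I$; interpolation gives $\D_{a+}^{-\gamma}: L^2(a,b) \to {}_0H^\gamma(a,b)$. Applying this to the identity $v = \D_{a+}^{-\gamma} \D_{a+}^\gamma v$ (valid for $v \in C^\infty_c(a,b)$ and extended by density) delivers $\nm{v}_{{}_0H^\gamma(a,b)} \leqslant C_\gamma' \nm{\D_{a+}^\gamma v}_{L^2(a,b)}$.

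For general $\gamma = j - 1 + \theta$ with $j \in \mathbb{N}_{>0}$ and $0 < \theta \leqslant 1$, I would iterate: for $v \in {}_0H^\gamma(a,b)$, the vanishing traces ensure that $\D_{a+}^{j-1} v = v^{(j-1)} \in {}_0H^\theta(a,b)$ with matching norm, and the definition $\D_{a+}^\gamma v = \D^j \D_{a+}^{\theta-1} v = \D_{a+}^\theta (\D_{a+}^{j-1} v)$ lets me reduce to the $\theta \in (0,1]$ case already handled. The second pair of inequalities for $\D_{b-}^\gamma w$ on ${}^0H^\gamma(a,b)$ follows by applying the reflection $w \mapsto w(a+b-\cdot)$, which exchanges $\D_{a+}^\gamma$ with $\D_{b-}^\gamma$ and ${}_0H^\gamma$ with ${}^0H^\gamma$ isometrically.

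The main obstacle is the careful handling of half-integer $\gamma$ and the consistency of the several definitions in play: the operator $\D_{a+}^\gamma$ defined via $\D^j \D_{a+}^{\gamma-j}$, the one defined by duality on ${}_0H^\beta(a,b)$ in the preliminaries, and the one produced by interpolation must all be identified on a common dense class before the bounds can be transferred. As remarked in the preliminaries, these identifications are cleanest in the ranges $0 < \gamma < 1/2$ (where ${}_0H^\gamma = {}^0H^\gamma = H^\gamma$) and degrade at $\gamma = k + 1/2$; the required compatibility is obtained by verifying the needed identities on $C^\infty_c(a,b)$ via direct computation with the Abel kernel and then extending by density using the interpolation bounds already established at neighboring $\gamma$.
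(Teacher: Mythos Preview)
The paper does not actually prove this lemma; immediately after stating it in the appendix, the authors write that ``for the proof of \cref{lem:regu-basic}, we refer the reader to'' a cited reference. So there is no in-paper argument to compare yours against beyond that pointer.

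Your global structure (integers by definition, reduction of $\gamma=j-1+\theta$ to $0<\theta\leqslant 1$ via the single-operator isometry $v\mapsto v^{(j-1)}$, and the reflection to handle $\D_{b-}^\gamma$) is fine. The gap is in the core range $0<\gamma<1$. You write that $\D_{a+}^{0}=I$ is bounded $L^2\to L^2$ and $\D_{a+}^{1}=\D$ is bounded ${}_0H^1\to L^2$, and that therefore ``the family is compatible in the sense required by the standard K-method interpolation theorem''; you argue the reverse bound the same way, interpolating between $\D_{a+}^{0}$ and $\D_{a+}^{-1}$. But the K-method interpolation theorem is a statement about a \emph{single} linear operator acting on an interpolation couple, not about a one-parameter family $\gamma\mapsto\D_{a+}^{\gamma}$ whose endpoints are different operators. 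What such an argument actually needs is Stein's interpolation theorem for analytic families, which (i) is a complex-interpolation result, so you would first have to identify the K-method space ${}_0H^\gamma(a,b)$ with the corresponding complex interpolation space (true for Hilbert couples, but it must be invoked), and (ii) requires checking the analyticity and admissible-growth hypotheses for $z\mapsto\D_{a+}^{z}$ on a dense class. None of this is supplied, and calling it ``the standard K-method interpolation theorem'' is not correct. The argument in the cited literature avoids operator-family interpolation altogether: for $v\in C_c^\infty(a,b)$ one extends by zero to $\mathbb{R}$, uses the Fourier-multiplier identity for the Riemann--Liouville operator on the line to get $\nm{\D_{a+}^\gamma v}_{L^2}$ comparable to the $\dot H^\gamma(\mathbb{R})$ seminorm of the extension, and then identifies that seminorm with the ${}_0H^\gamma(a,b)$ norm via Hardy-type inequalities, with the usual care near $\gamma=1/2$.
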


\begin{lemma}
  \label{lem:dual}
  Assume that $ v \in {}_0H^{\gamma/2}(a,b) $ and $ w \in {}^0H^{\gamma/2}(a,b)
  $ with $ 0 < \gamma < 1 $. Then
  \begin{equation}
    \Dual{\D_{a+}^\gamma v, w}_{{}^0H^{\gamma/2}(a,b)} =
    \Dual{\D_{b-}^\gamma w, v}_{{}_0H^{\gamma/2}(a,b)}.
  \end{equation}
  If $ \D_{a+}^\gamma v \in L^{2/(1+\gamma)}(a,b) $, then
  \begin{equation}
    \Dual{\D_{a+}^\gamma v, w}_{{}^0H^{\gamma/2}(a,b)} =
    \Dual{\D_{a+}^\gamma v, w}_{(a,b)}.
  \end{equation}
  If $ \D_{b-}^\gamma w \in L^{2/(1+\gamma)}(a,b) $, then
  \begin{equation}
    \Dual{\D_{b-}^\gamma w, v}_{{}_0H^{\gamma/2}(a,b)} =
    \Dual{\D_{b-}^\gamma w, v}_{(a,b)}.
  \end{equation}
\end{lemma}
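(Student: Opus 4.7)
\bigskip

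\noindent\textbf{Proof proposal for \cref{lem:dual}.}

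The plan is to reduce each of the three identities to a direct consequence of the definitions in the preliminaries, with density and classical fractional integration by parts supplying the remaining step. First, I would specialize the defining relations of $\D_{a+}^\gamma$ and $\D_{b-}^\gamma$ (stated just above the appendix's fractional-calculus subsection) to the symmetric choice $\beta=\gamma/2$: for any $v\in{}_0H^{\gamma/2}(a,b)$ and $w\in{}^0H^{\gamma/2}(a,b)$,
\[
  \Dual{\D_{a+}^\gamma v, w}_{{}^0H^{\gamma/2}(a,b)}
  =\Dual{\D_{a+}^{\gamma/2} v,\,\D_{b-}^{\gamma/2} w}_{(a,b)},
  \qquad
  \Dual{\D_{b-}^\gamma w, v}_{{}_0H^{\gamma/2}(a,b)}
  =\Dual{\D_{b-}^{\gamma/2} w,\,\D_{a+}^{\gamma/2} v}_{(a,b)}.
\]
Since the right-hand sides are $L^2$ integrals (both factors lie in $L^2(a,b)$ by \cref{lem:regu-basic}), they coincide by symmetry of the $L^2$ pairing. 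This yields the first identity immediately.

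For the second identity I would argue in two steps. Using the Sobolev embedding ${}^0H^{\gamma/2}(a,b)\hookrightarrow L^{2/(1-\gamma)}(a,b)$ (valid because $\gamma/2<1/2$) together with H\"older's inequality with conjugate exponents $2/(1+\gamma)$ and $2/(1-\gamma)$, the integral $\int_a^b (\D_{a+}^\gamma v)\, w$ is absolutely convergent and depends continuously on $w\in{}^0H^{\gamma/2}(a,b)$. Next I would pick a sequence $w_n\in C_c^\infty(a,b)$ with $w_n\to w$ in ${}^0H^{\gamma/2}(a,b)$; then the right-hand side of the second identity passes to the limit by the H\"older/embedding bound above, while the left-hand side passes by definition of the duality pairing. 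So it remains to prove equality for smooth, compactly supported $w_n$. For such $w_n$, the classical fractional integration-by-parts identity combined with the semigroup property $\D_{a+}^{\gamma/2}\circ\D_{a+}^{\gamma/2}=\D_{a+}^\gamma$ (applied to $\D_{a+}^{\gamma/2} v\in L^2(a,b)$) yields
\[
  \Dual{\D_{a+}^{\gamma/2} v,\,\D_{b-}^{\gamma/2} w_n}_{(a,b)}
  =\Dual{\D_{a+}^\gamma v,\,w_n}_{(a,b)},
\]
which is exactly the claim at the smooth level. The third identity is proved by the symmetric argument: interchange the roles of $(a+)$ and $(b-)$, replace the embedding of ${}^0H^{\gamma/2}$ by that of ${}_0H^{\gamma/2}$, and approximate $v$ instead of $w$.

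The only non-routine step is the smooth-case integration by parts. To keep the argument self-contained, I would first verify the semigroup identity $\D_{a+}^{\gamma/2}\D_{a+}^{\gamma/2}=\D_{a+}^\gamma$ on $v\in{}_0H^{\gamma/2}(a,b)$ in the weak sense furnished by the definitions, using that $\D_{a+}^{-\gamma/2}$ (the Riemann-Liouville fractional integral) is a bounded inverse to $\D_{a+}^{\gamma/2}$ between the relevant spaces; then fractional integration by parts for compactly supported smooth $w_n$ follows from Fubini's theorem applied to the convolution kernels defining $\D_{a+}^{\gamma/2}$ and $\D_{b-}^{\gamma/2}$. I expect this bookkeeping with the interplay of weak definitions, $L^p$ bounds, and the semigroup law to be the main technical obstacle; once it is in place, the density argument closes each identity cleanly.
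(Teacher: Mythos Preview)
Your proposal is correct and matches the paper's approach: the paper omits the proof entirely, stating only that it is ``a standard density argument by \cref{lem:coer,lem:regu-basic},'' which is precisely the skeleton you have fleshed out---the first identity drops out of the symmetric choice $\beta=\gamma/2$ in the defining duality relations, and the second and third follow by approximating in the appropriate ${}_0H^{\gamma/2}$ or ${}^0H^{\gamma/2}$ space (recall these coincide with $H^{\gamma/2}$ for $\gamma/2<1/2$, so $C_c^\infty(a,b)$ is indeed dense) and invoking classical fractional integration by parts at the smooth level.
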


\noindent For the proof of \cref{lem:coer}, we refer the reader to
\cite{Ervin2006}. For the proof of \cref{lem:regu-basic}, we refer the reader to
\cite{Luo2019}. Since the proof of \cref{lem:dual} is a standard density
argument by \cref{lem:coer,lem:regu-basic}, it is omitted here.


\section{ Some inequalities}
\begin{lemma} 
  \label{lem:31}
  For any $ 0 < \beta < 1 $ and $ 0 \leqslant t < a < b < c < d $,
  \begin{equation} \label{eq:lem31}
    \frac{
      (d-t)^{1-\beta} - (d-a)^{1-\beta}
    }{
      (d-a)^{1-\beta} - (d-b)^{1-\beta}
    } >
    \frac{
      (c-t)^{1-\beta} - (c-a)^{1-\beta}
    }{
      (c-a)^{1-\beta} - (c-b)^{1-\beta}
    }.
  \end{equation}
\end{lemma}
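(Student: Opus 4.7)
The plan is to recast the two-sided inequality as the strict monotonicity of a single-variable function, then verify that monotonicity by differentiating a logarithm and interpreting the resulting ratios of integrals as weighted averages. Set $\alpha := 1-\beta \in (0,1)$, $r_1 := a-t > 0$, and $r_2 := b-a > 0$. With the substitution $\xi := d-a$ on the left-hand side of \cref{eq:lem31} and $\xi := c-a$ on the right-hand side (both values exceed $r_2$ because $c > b$), the claimed inequality takes the form $g(d-a) > g(c-a)$, where
\[
g(\xi) := \frac{(\xi+r_1)^\alpha - \xi^\alpha}{\xi^\alpha - (\xi-r_2)^\alpha}, \qquad \xi > r_2.
\]
Since $d-a > c-a$, it suffices to prove that $g$ is strictly increasing on $(r_2,\infty)$.

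To this end, I would write the numerator and denominator through the integral representations
\[
(\xi+r_1)^\alpha - \xi^\alpha = \alpha\int_0^{r_1}(\xi+s)^{\alpha-1}\,\mathrm{d}s, \qquad \xi^\alpha - (\xi-r_2)^\alpha = \alpha\int_0^{r_2}(\xi-s)^{\alpha-1}\,\mathrm{d}s,
\]
and then differentiate $\log g$ under the integral sign. A direct computation yields
\[
\frac{\mathrm{d}}{\mathrm{d}\xi}\log g(\xi) = (\alpha-1)\left[\frac{\int_0^{r_1}(\xi+s)^{\alpha-2}\,\mathrm{d}s}{\int_0^{r_1}(\xi+s)^{\alpha-1}\,\mathrm{d}s} - \frac{\int_0^{r_2}(\xi-s)^{\alpha-2}\,\mathrm{d}s}{\int_0^{r_2}(\xi-s)^{\alpha-1}\,\mathrm{d}s}\right].
\]
Each bracketed quotient has a transparent interpretation: it is the weighted average of the pointwise ratio $(\xi\pm s)^{\alpha-2}/(\xi\pm s)^{\alpha-1} = (\xi\pm s)^{-1}$ with respect to the positive density $(\xi\pm s)^{\alpha-1}\,\mathrm{d}s$.

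The decisive observation is then purely pointwise. Because $(\xi+s)^{-1} < \xi^{-1}$ for every $s>0$, the first weighted average is strictly smaller than $\xi^{-1}$; because $(\xi-s)^{-1} > \xi^{-1}$ for every $s>0$, the second is strictly larger than $\xi^{-1}$. Hence the bracketed expression is strictly negative, and since the prefactor $\alpha-1$ is also strictly negative, $(\log g)'(\xi)>0$ on $(r_2,\infty)$. This gives the strict monotonicity of $g$ and therefore \cref{eq:lem31}. There is no serious obstacle: the only routine checks are that differentiation under the integral sign is legitimate (the integrands are smooth in $\xi$ on compact subintervals of $(r_2,\infty)$) and that the strict pointwise comparisons occur on sets of positive Lebesgue measure, which is immediate from $r_1,r_2>0$.
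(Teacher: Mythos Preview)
Your proof is correct. Both you and the paper reduce the inequality to the strict monotonicity of the same single-variable function: with $\xi=d-a-x$, the paper's
\[
g(x)=\frac{(d-t-x)^{1-\beta}-(d-a-x)^{1-\beta}}{(d-a-x)^{1-\beta}-(d-b-x)^{1-\beta}},\qquad 0\leqslant x\leqslant d-c,
\]
is exactly your $g(\xi)$, and the claim $g(d-c)<g(0)$ in the paper is your $g(c-a)<g(d-a)$. The difference lies in how the monotonicity is established. The paper introduces an auxiliary function $w(y)=\dfrac{1-y^{-\beta}}{y^{1-\beta}-1}$, asserts (``a routine argument'') that $w$ is strictly decreasing on $[0,\infty)$, and then manipulates $w\big((d-t-x)/(d-a-x)\big)<w\big((d-b-x)/(d-a-x)\big)$ algebraically into $g'(x)<0$. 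Your route bypasses this auxiliary object entirely: by writing numerator and denominator as integrals of $(\xi\pm s)^{\alpha-1}$, you read $(\log g)'$ directly as $(\alpha-1)$ times a difference of weighted averages of $(\xi\pm s)^{-1}$, and the sign is immediate from the pointwise bounds $(\xi+s)^{-1}<\xi^{-1}<(\xi-s)^{-1}$. Your argument is self-contained and conceptually cleaner; the paper's has the minor advantage that once $w$ is known to be decreasing, the remaining steps are purely algebraic and require no calculus beyond the definition of the derivative.
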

\begin{proof} 
  Let
  \[
    w(y) := \begin{cases}
      \beta/(1-\beta) & \text{ if } y = 1, \\
      \frac{1-y^{-\beta}}{y^{1-\beta} - 1} &
      \text{ if } y \in [0,\infty) \setminus \{1\}.
    \end{cases}
  \]
  A routine argument proves that $ w $ is strictly decreasing on $ [0,\infty) $,
  so that
  \[
    w\big( (d-t-x)/(d-a-x) \big) < w\big( (d-b-x)/(d-a-x) \big)
    \quad \forall 0 \leqslant x \leqslant d-c.
  \]
  It follows that, for any $ 0 \leqslant x \leqslant d-c $,
  \begin{small}
  \[
    \frac{
      (d-a-x)^{-\beta} -  (d-t-x)^{-\beta}
    }{
      (d-t-x)^{1-\beta} - (d-a-x)^{1-\beta}
    } < \frac{
      (d-b-x)^{-\beta} - (d-a-x)^{-\beta}
    }{
      (d-a-x)^{1-\beta} - (d-b-x)^{1-\beta}
    },
  \]
  \end{small}
  which implies
  \begin{small}
  \begin{align*}
    & \big( (d-a-x)^{-\beta} - (d-t-x)^{-\beta} \big)
    \big( (d-a-x)^{1-\beta} - (d-b-x)^{1-\beta} \big) \\
    & -
    \big( (d-b-x)^{-\beta} - (d-a-x)^{-\beta} \big)
    \big( (d-t-x)^{1-\beta} - (d-a-x)^{1-\beta} \big) < 0
  \end{align*}
  \end{small}
  for all $ 0 \leqslant x \leqslant d-c $. A simple calculation then yields $
  g'(x) < 0 $ for all $ 0 \leqslant x \leqslant d-c $, where
  \[
    g(x) := \frac{
      (d-t-x)^{1-\beta} - (d-a-x)^{1-\beta}
    }{
      (d-a-x)^{1-\beta} - (d-b-x)^{1-\beta}
    }, \quad 0 \leqslant x \leqslant d-c.
  \]
  This proves $ g(d-c) < g(0) $, namely \cref{eq:lem31}, and thus concludes the
  proof.
\end{proof}

\begin{lemma}
  \label{lem:pre-G}
  For any $ 0 < \beta < 1 $, $ \mu \geqslant 0 $ and $ 0 \leqslant t < a < b < c
  < d $,
  \begin{small}
  \begin{equation}
    \label{eq:pre-G}
    \frac{
      (d-t)^{1-\beta} - (d-a)^{1-\beta} + \mu (a-t)
    }{
      (d-a)^{1-\beta} - (d-b)^{1-\beta} + \mu (b-a)
    } >
    \frac{
      (c-t)^{1-\beta} - (c-a)^{1-\beta} + \mu (a-t)
    }{
      (c-a)^{1-\beta} - (c-b)^{1-\beta} + \mu (b-a)
    }.
  \end{equation}
  \end{small}
\end{lemma}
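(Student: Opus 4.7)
The plan is to reduce \cref{eq:pre-G} to a cross-multiplication, split the resulting expression into its $\mu$-independent and $\mu$-linear parts, dispatch the first via \cref{lem:31}, and handle the second through strict convexity of $x \mapsto x^{-\beta}$.

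Introduce the abbreviations $P := (d-t)^{1-\beta} - (d-a)^{1-\beta}$, $Q := (d-a)^{1-\beta} - (d-b)^{1-\beta}$, $R := (c-t)^{1-\beta} - (c-a)^{1-\beta}$, $S := (c-a)^{1-\beta} - (c-b)^{1-\beta}$, and set $u := a-t > 0$, $v := b-a > 0$. Because both denominators in \cref{eq:pre-G} are positive, the target inequality is equivalent, after cross-multiplying and cancelling the common $\mu^2 uv$ term, to
\[
  (PS - QR) + \mu\bigl[(P-R)v + (S-Q)u\bigr] > 0 \quad\text{for all } \mu \geqslant 0.
\]
The first summand is strictly positive by \cref{lem:31} (this is exactly \cref{eq:lem31} after clearing denominators). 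Everything thus reduces to showing $(P-R)v + (S-Q)u > 0$.

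To this end, I would write $P-R$ and $S-Q$ as integrals using the fundamental theorem of calculus, obtaining
\[
  (P-R)v + (S-Q)u = (1-\beta)\int_c^d \bigl[v(s-t)^{-\beta} + u(s-b)^{-\beta} - (u+v)(s-a)^{-\beta}\bigr]\,\mathrm{d}s.
\]
The decisive observation is the algebraic identity $v(s-t) + u(s-b) = (u+v)(s-a)$, which follows at once from $uv = vu$; it expresses $s-a$ as the convex combination $\tfrac{v}{u+v}(s-t) + \tfrac{u}{u+v}(s-b)$, with all three arguments strictly positive. Since $x \mapsto x^{-\beta}$ is strictly convex on $(0,\infty)$ and $s-t \neq s-b$ (as $t<b$), strict Jensen inequality yields $(s-a)^{-\beta} < \tfrac{v}{u+v}(s-t)^{-\beta} + \tfrac{u}{u+v}(s-b)^{-\beta}$, so the integrand is pointwise strictly positive, as required.

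The main obstacle is spotting the ``barycenter'' identity that places $s-a$ precisely at the convex combination of $s-t$ and $s-b$ with weights $v$ and $u$; once that is seen, convexity finishes the argument in one line. An alternative route, if one wishes to mimic the proof of \cref{lem:31} more closely, is to define
\[
  g(x) := \frac{(d-t-x)^{1-\beta} - (d-a-x)^{1-\beta} + \mu u}{(d-a-x)^{1-\beta} - (d-b-x)^{1-\beta} + \mu v},\quad 0 \leqslant x \leqslant d-c,
\]
and show $g'(x) < 0$; the $\mu$-contribution to the numerator of $g'$ can then be controlled via the mean value theorem applied to $s\mapsto s^{-\beta}$, which gives the required sign by the same monotonicity of $\xi \mapsto \xi^{-\beta-1}$ already used in \cref{lem:31}.
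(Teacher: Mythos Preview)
Your proposal is correct. The overall architecture matches the paper exactly: cross-multiply, split off the $\mu$-free term $PS-QR$ (handled by \cref{lem:31}), and then prove the $\mu$-linear inequality $(P-R)v+(S-Q)u>0$. Where you diverge is in this last step. The paper proves it by applying the mean value theorem to $g(s)=(d-b+s(b-a))^{1-\beta}-(c-b+s(b-a))^{1-\beta}$, bounding $g'(\theta)$ via the monotonicity of $r\mapsto (d-r)^{-\beta}-(c-r)^{-\beta}$, and then integrating the resulting pointwise bound over $s\in[t,a]$ to obtain \cref{eq:adam}, which is precisely $u(S-Q)>v(R-P)$. Your route instead writes $(P-R)v+(S-Q)u$ as $(1-\beta)\int_c^d[\,\cdot\,]\,\mathrm{d}s$ and kills the integrand in one stroke via strict convexity of $x\mapsto x^{-\beta}$, after spotting that $s-a$ is the $(v,u)$-barycenter of $s-t$ and $s-b$. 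Both arguments are short; yours is arguably more transparent, since the convexity step replaces the paper's two-stage ``MVT then integrate'' manoeuvre by a single Jensen inequality, and it makes explicit the structural reason the inequality holds. The paper's approach, on the other hand, stays closer to the style of the proof of \cref{lem:31} and avoids naming convexity explicitly.
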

\begin{proof}
  Define
  \[
    g(s):= \big( d-b+s(b-a) \big)^{1-\beta}
    - \big( c-b+s(b-a) \big)^{1-\beta}
    \quad \forall 0 \leqslant s \leqslant a.
  \]
  By the mean value theorem, there exists $ \theta \in (0,1)$ such that
  \begin{align*}
    & g(1)-g(0)= g'(\theta) \\
    ={} &
    (1-\beta) \left(
      \big( d-b+\theta(b-a) \big)^{-\beta}
      -\big( c-b+\theta(b-a) \big)^{-\beta}
    \right) (b-a).
  \end{align*}
  Since
  \begin{small}
  \[
    \big( d-b+\theta(b-a) \big)^{-\beta} -
    \big( c-b+\theta(b-a) \big)^{-\beta} >
    (d-a)^{-\beta}-(c-a)^{-\beta},
  \]
  \end{small}
  it follows that
  \[
    g(1)-g(0) > (1-\beta) ((d-a)^{-\beta}-(c-a)^{-\beta}) (b-a),
  \]
  which implies
  \begin{small}
  \begin{align}
    \frac{1}{b -a} > \frac{
      (1-\beta)((d-a)^{-\beta}-(c-a)^{-\beta})
    }{
      (d-a)^{1-\beta} - (d-b)^{1-\beta}- (c-a)^{1-\beta} + (c-b)^{1-\beta}
    }.
  \end{align}
  \end{small}
  Hence, by the estimate
  \[
    (d-s)^{-\beta} - (c-s)^{-\beta} <
    (d-a)^{-\beta} - (c-a)^{-\beta}
    \quad \forall 0 \leqslant s < a,
  \]
  we obtain
  \begin{small}
  \begin{align}
    \frac{1}{b -a} > \frac{
      (1-\beta)((d-s)^{-\beta}-(c-s)^{-\beta})
    }{
      (d-a)^{1-\beta} - (d-b)^{1-\beta}- (c-a)^{1-\beta} + (c-b)^{1-\beta}
    } \quad \forall 0 \leqslant s < a.
  \end{align}
  \end{small}
  Integrating both sides of the above equation with respect to $ s $ from $ t $
  to $ a $ yields
  \begin{small}
  \begin{equation}
    \label{eq:adam}
    \frac{a -t}{b -a} >
    \frac{
      (c-t)^{1-\beta}- (c-a)^{1-\beta} -
      (d-t)^{1-\beta} + (d-a)^{1-\beta}
    }{
      (c-a)^{1-\beta} - (c-b)^{1-\beta} -
      (d-a)^{1-\beta} + (d-b)^{1-\beta}
    }.
  \end{equation}
  \end{small}
  Let
  \begin{align*}
    & \mathcal A := (d-t)^{1-\beta} - (d-a)^{1-\beta}, \quad
    \mathcal B := (d-a)^{1-\beta} - (d-b)^{1-\beta}, \\
    & \mathcal C := (c-t)^{1-\beta} - (c-a)^{1-\beta}, \quad
    \mathcal D := (c-a)^{1-\beta} - (c-b)^{1-\beta}, \\
    & \mathcal M := \mu (a-t), \qquad \mathcal N := \mu (b-a).
  \end{align*}
  Since \cref{lem:31} implies $ \mathcal A \mathcal D > \mathcal B \mathcal C $
  and \cref{eq:adam} implies $ \mathcal M (\mathcal D-\mathcal B) \geqslant
  \mathcal N (\mathcal C - \mathcal A) $, we obtain
  \[
    (\mathcal A + \mathcal M)(\mathcal D + \mathcal N) >
    (\mathcal B + \mathcal N)(\mathcal C + \mathcal M),
  \]
  which proves \cref{eq:pre-G}. This completes the proof.
\end{proof}

\begin{lemma} 
  \label{lem:G-wave}
  For any $ 1 < \beta < 2 $ and $ 0 \leqslant t < a < b \leqslant c $,
  \begin{equation}
    \label{eq:lxy}
    \frac{
      (c-t)^{2-\beta} - (c-a)^{2-\beta}
    }{
      (c-a)^{2-\beta} - (c-b)^{2-\beta}
    } < \frac{a-t}{b-a}.
  \end{equation}
\end{lemma}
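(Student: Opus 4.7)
The plan is to reduce the inequality to a direct application of the mean value theorem. Writing $f(s) := (c-s)^{2-\beta}$ on $[0,c]$, we have $f'(s) = -(2-\beta)(c-s)^{1-\beta}$, which is well-defined and negative on $(t,b)$ since $b \leqslant c$ is allowed but $a < b$ guarantees the relevant intermediate points lie strictly inside $(t, c)$.

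By the mean value theorem applied to $f$ on $[t,a]$ and on $[a,b]$, there exist $\xi_1 \in (t,a)$ and $\xi_2 \in (a,b)$ with
\[
(c-t)^{2-\beta} - (c-a)^{2-\beta} = (2-\beta)(c-\xi_1)^{1-\beta}(a-t),
\]
\[
(c-a)^{2-\beta} - (c-b)^{2-\beta} = (2-\beta)(c-\xi_2)^{1-\beta}(b-a).
\]
Taking the ratio, the factors $(2-\beta)$ cancel and the inequality \cref{eq:lxy} reduces to showing $(c-\xi_1)^{1-\beta}/(c-\xi_2)^{1-\beta} < 1$. Since $\xi_1 < a < \xi_2 \leqslant b \leqslant c$, we have $c-\xi_1 > c-\xi_2 > 0$, and because the exponent $1-\beta$ is strictly negative (as $1 < \beta < 2$), raising to this exponent reverses the inequality, yielding $(c-\xi_1)^{1-\beta} < (c-\xi_2)^{1-\beta}$, which is exactly what is needed.

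There is essentially no obstacle here; the only point requiring a small sanity check is that $c - \xi_2 > 0$ even in the boundary case $b = c$, which holds because $\xi_2 < b = c$. The inequality is strict throughout because $\xi_1 < \xi_2$ strictly and $t^{1-\beta}$ is strictly monotone. So the lemma follows by simply assembling these observations; no delicate estimates, induction, or auxiliary functions (like the $w$ used in \cref{lem:31}) are required for this variant.
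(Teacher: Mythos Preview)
Your proof is correct and rests on the same idea as the paper's: both arguments use the mean value theorem for $s\mapsto(c-s)^{2-\beta}$ together with the monotonicity of its derivative $(c-s)^{1-\beta}$ on $(t,b)$. The paper applies the mean value theorem only to the denominator and then integrates a pointwise bound to recover the numerator, whereas your symmetric application of the mean value theorem to both numerator and denominator is slightly more direct but otherwise equivalent.
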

\begin{proof} 
  By the mean value theorem, there exists $ 0 < \theta < 1 $ such that
  \[
    (c-a)^{2-\beta} - (c-b)^{2-\beta} =
    (2-\beta) \big( c-b+\theta(b-a) \big)^{1-\beta}
    \, (b-a),
  \]
  and so
  \[
    \frac{(2-\beta)(c-a)^{1-\beta}}{
      (c-a)^{2-\beta} - (c-b)^{2-\beta}
    } = \left( \frac{c-a}{ c-b+\theta(b-a) } \right)^{1-\beta}
    \frac1{b-a} < \frac1{b-a}.
  \]
  Since
  \[
    (c-a)^{1-\beta} \geqslant (c-s)^{1-\beta}
    \quad\text{for all } 0 \leqslant s \leqslant a,
  \]
  it follows that
  \[
    \frac{(2-\beta)(c-s)^{1-\beta}}{
      (c-a)^{2-\beta} - (c-b)^{2-\beta}
    } < \frac1{b-a} \quad\text{for all }
    0 \leqslant s \leqslant a.
  \]
  Hence, for any $ 0 \leqslant t < a $,
  \begin{align*}
    \int_t^a \frac{(2-\beta)(c-s)^{1-\beta}}{
      (c-a)^{2-\beta} - (c-b)^{2-\beta}
    } \, \mathrm{d}t <
    \int_t^a \frac1{b-a} \, \mathrm{d}t,
  \end{align*}
  which implies \cref{eq:lxy}. This completes the proof.
\end{proof}

\begin{lemma} 
  \label{lem:discr-conv1}
  If $ \beta>-1 $ and $ \gamma>1 $, then
  \begin{equation}
    \sum_{j=1}^{k-1} j^\beta (k^\sigma-j^\sigma)^{-\gamma}
    \leqslant C_{\beta,\gamma,\sigma} k^{\beta-(\sigma-1)\gamma}
  \end{equation}
  for all $ k \geqslant 2 $.
\end{lemma}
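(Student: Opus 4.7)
\medskip\noindent\textbf{Proof plan for \cref{lem:discr-conv1}.}
The plan is to split the sum at the midpoint $j=\lfloor k/2\rfloor$ and treat the ``bulk'' and ``near-diagonal'' contributions separately, since the only potential source of growth beyond the bound $k^{\beta-(\sigma-1)\gamma}$ is the near-diagonal singularity as $j\to k$. For the bulk range $1\leqslant j\leqslant k/2$ I will exploit that $k^\sigma-j^\sigma\geqslant (1-2^{-\sigma})k^\sigma$, so $(k^\sigma-j^\sigma)^{-\gamma}\leqslant C_\sigma k^{-\sigma\gamma}$, and then use the elementary bound $\sum_{j=1}^{\lfloor k/2\rfloor} j^\beta\leqslant C_\beta k^{\beta+1}$ (valid because $\beta>-1$). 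Multiplying gives a contribution of order $k^{\beta+1-\sigma\gamma}$, which is dominated by the target $k^{\beta-(\sigma-1)\gamma}=k^{\beta-\sigma\gamma+\gamma}$ precisely because $\gamma>1$.

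For the near-diagonal range $k/2<j\leqslant k-1$ I will change index to $l:=k-j\in\{1,\dots,\lfloor k/2\rfloor\}$. On this range $k/2\leqslant j\leqslant k-1$, so $j^\beta\leqslant C_\beta k^\beta$ uniformly in $j$ (with the constant $C_\beta$ handling both signs of $\beta$). The key quantitative step is the mean value estimate
\[
k^\sigma-(k-l)^\sigma \;=\; \sigma\,\xi^{\sigma-1}\,l \;\geqslant\; \sigma\,(k-l)^{\sigma-1}\,l \;\geqslant\; \sigma\,(k/2)^{\sigma-1}\,l,
\]
which uses $\sigma\geqslant 1$ together with $l\leqslant k/2$. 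This yields $(k^\sigma-j^\sigma)^{-\gamma}\leqslant C_\sigma\,k^{-(\sigma-1)\gamma}\,l^{-\gamma}$, so the near-diagonal contribution is at most
\[
C_{\beta,\sigma}\,k^{\beta-(\sigma-1)\gamma}\sum_{l=1}^{\lfloor k/2\rfloor} l^{-\gamma}\;\leqslant\; C_{\beta,\gamma,\sigma}\,k^{\beta-(\sigma-1)\gamma},
\]
where the tail sum converges exactly because $\gamma>1$.

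Combining the two regions gives the claimed bound. The main obstacle, as already visible above, is the near-diagonal piece: the naive estimate $k^\sigma-j^\sigma\gtrsim k^\sigma$ is far too crude there and would produce an extra factor of $k$, so one has to detect the correct local behavior $k^\sigma-j^\sigma\asymp k^{\sigma-1}(k-j)$ via the mean value theorem and then absorb the resulting $l^{-\gamma}$ series using the hypothesis $\gamma>1$. The other ingredient (the bulk estimate) is completely routine and only relies on $\beta>-1$ to make $\sum j^\beta$ of order $k^{\beta+1}$.
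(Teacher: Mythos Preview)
Your argument is correct, but it differs from the paper's proof. The paper first observes that the summand $j^\beta(k^\sigma-j^\sigma)^{-\gamma}$ is comparable (up to constants depending on $\beta,\gamma,\sigma$) to $(j-x)^\beta(k^\sigma-(j-x)^\sigma)^{-\gamma}$ for $0<x\leqslant 1$, which allows the sum to be controlled by the integral $\int_1^{k-1} x^\beta(k^\sigma-x^\sigma)^{-\gamma}\,\mathrm{d}x$; the substitution $s=(x/k)^\sigma$ then reduces this to an incomplete Beta--type integral $k^{\beta+1-\sigma\gamma}\int_0^{((k-1)/k)^\sigma} s^{(1+\beta)/\sigma-1}(1-s)^{-\gamma}\,\mathrm{d}s$, whose size is governed by the endpoint behavior at $s=1$ and yields the factor $(1-((k-1)/k)^\sigma)^{1-\gamma}\asymp k^{\gamma-1}$. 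Your midpoint split avoids the integral comparison and the change of variables entirely, trading them for the mean value estimate $k^\sigma-(k-l)^\sigma\geqslant \sigma(k/2)^{\sigma-1}l$ and the convergent tail $\sum_l l^{-\gamma}$; this is more elementary and makes the role of each hypothesis ($\beta>-1$ for the bulk, $\gamma>1$ for the tail, $\sigma\geqslant 1$ for the mean value bound) completely transparent. Both proofs generalize to the companion estimate \cref{lem:discr-conv2} with $1/2\leqslant\gamma<1$: in your framework the only change is that $\sum_{l=1}^{\lfloor k/2\rfloor} l^{-\gamma}$ is now of order $(1-\gamma)^{-1}k^{1-\gamma}$, which produces exactly the extra factor in that lemma.
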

\begin{proof}
  A routine calculation gives
  \begin{align*}
    C_0 \leqslant \frac{
      j^\beta(k^\sigma-j^\sigma)^{-\gamma}
    }{
      (j-x)^\beta(k^\sigma-(j-x)^\sigma)^{-\gamma}
    } \leqslant C_1,
  \end{align*}
  for all $ 2 \leqslant j \leqslant k-1 $ and $ 0 < x \leqslant 1 $, where $ C_0
  $ and $ C_1 $ are two positive constants depending only on $ \beta $, $ \gamma
  $ and $ \sigma $. Hence,
  \begin{align*}
    & \sum_{j=1}^{k-1} j^\beta(k^\sigma-j^\sigma)^{-\gamma} \\
    \leqslant{} &
    C_{\beta,\gamma,\sigma} \int_1^{k-1}
    x^\beta(k^\sigma - x^\sigma)^{-\gamma} \, \mathrm{d}x \\
    \leqslant{} &
    C_{\beta,\gamma,\sigma} k^{-\sigma\gamma+\beta+1}
    \int_{k^{-\sigma}}^{((k-1)/k)^\sigma}
    s^{(1+\beta)/\sigma-1}(1-s)^{-\gamma} \, \mathrm{d}s \\
    \leqslant{} &
    C_{\beta,\gamma,\sigma} k^{-\sigma\gamma+\beta+1}
    \int_0^{((k-1)/k)^\sigma}
    s^{(1+\beta)/\sigma-1}(1-s)^{-\gamma} \, \mathrm{d}s \\
    \leqslant{} &
    C_{\beta,\gamma,\sigma} k^{-\sigma\gamma+\beta+1}
    \big( 1-((k-1)/k)^\sigma \big)^{1-\gamma} \\
    \leqslant{} &
    C_{\beta,\gamma,\sigma} k^{-\sigma\gamma+\beta+1+\gamma-1} \\
    ={} &  C_{\beta,\gamma,\sigma}
    k^{\beta - (\sigma-1)\gamma}.
  \end{align*}
  This proves the lemma.
\end{proof}

A trivial modification of the proof of \cref{lem:discr-conv1} yields the
following estimate.
\begin{lemma} 
  \label{lem:discr-conv2}
  If $ \beta > -1 $ and $ 1/2 \leqslant \gamma < 1 $, then
  \begin{equation}
    \sum_{j=1}^{k-1} j^\beta
    (k^\sigma-j^\sigma)^{-\gamma}
    \leqslant C_{\beta,\sigma} (1-\gamma)^{-1}
    k^{\beta-\sigma\gamma+1}
  \end{equation}
  for all $ k \geqslant 2 $.
\end{lemma}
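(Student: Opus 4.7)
The plan is to mimic the proof of Lemma \ref{lem:discr-conv1} essentially verbatim, modifying only the step that depends on the range of $\gamma$. As a first step, I would use the same monotonicity/comparison argument to bound the discrete sum by an integral: for $2 \leqslant j \leqslant k-1$ and $0 < x \leqslant 1$, the ratio $j^\beta (k^\sigma - j^\sigma)^{-\gamma} / [(j-x)^\beta (k^\sigma - (j-x)^\sigma)^{-\gamma}]$ is bounded above and below by positive constants depending only on $\beta,\gamma,\sigma$ (uniformly on $\gamma \in [1/2,1)$). This yields
\[
  \sum_{j=1}^{k-1} j^\beta (k^\sigma - j^\sigma)^{-\gamma}
  \leqslant
  C_{\beta,\sigma} \int_1^{k-1} x^\beta (k^\sigma - x^\sigma)^{-\gamma} \, \mathrm{d}x,
\]
with a constant that can be chosen independently of $\gamma \in [1/2,1)$.

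Second, I would perform the change of variables $s = (x/k)^\sigma$, converting the integral to
\[
  \frac{k^{\beta+1-\sigma\gamma}}{\sigma}
  \int_{k^{-\sigma}}^{((k-1)/k)^\sigma}
  s^{(\beta+1)/\sigma - 1} (1-s)^{-\gamma} \, \mathrm{d}s
  \leqslant
  \frac{k^{\beta+1-\sigma\gamma}}{\sigma}
  \int_0^1 s^{(\beta+1)/\sigma - 1} (1-s)^{-\gamma} \, \mathrm{d}s.
\]

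Third, the crucial departure from the proof of \cref{lem:discr-conv1} is that now $\gamma < 1$, so $(1-s)^{-\gamma}$ is integrable near $s=1$ and the full integral on $[0,1]$ is a Beta function:
\[
  \int_0^1 s^{(\beta+1)/\sigma - 1} (1-s)^{-\gamma} \, \mathrm{d}s
  = B\!\left( \tfrac{\beta+1}{\sigma}, \, 1-\gamma \right)
  = \frac{\Gamma\!\left( (\beta+1)/\sigma \right) \, \Gamma(1-\gamma)}
         {\Gamma\!\left( (\beta+1)/\sigma + 1-\gamma \right)}.
\]
Since $\beta > -1$ guarantees $(\beta+1)/\sigma > 0$, the numerator factor $\Gamma((\beta+1)/\sigma)$ and the denominator $\Gamma((\beta+1)/\sigma + 1-\gamma)$ are both bounded by constants depending only on $\beta,\sigma$ (uniformly on $\gamma \in [1/2,1)$). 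Using the functional identity $\Gamma(1-\gamma) = \Gamma(2-\gamma)/(1-\gamma)$ and the fact that $\Gamma(2-\gamma)$ is bounded for $\gamma \in [1/2,1)$, I obtain $\Gamma(1-\gamma) \leqslant C(1-\gamma)^{-1}$, producing exactly the $(1-\gamma)^{-1}$ factor in the target estimate.

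The main (and only) potential obstacle is the delicate tracking of constants as $\gamma \to 1^-$: one must verify that the singular behavior of the Beta function is entirely captured by the $\Gamma(1-\gamma)$ factor, and that none of the other ingredients (the comparison constant, the change-of-variables factor, the other two Gamma functions) introduces a hidden blowup. This is straightforward because $1-\gamma$ stays in the compact interval $(0, 1/2]$, keeping the regular ingredients uniformly bounded.
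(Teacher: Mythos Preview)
Your proposal is correct and matches exactly what the paper intends: the paper does not give a separate proof but states that Lemma~\ref{lem:discr-conv2} follows by ``a trivial modification of the proof of \cref{lem:discr-conv1}.'' Your modification---extending the integral to $[0,1]$ (legitimate since $\gamma<1$) and identifying it as the Beta function $B((\beta+1)/\sigma,\,1-\gamma)$ to extract the $(1-\gamma)^{-1}$ factor via $\Gamma(1-\gamma)=\Gamma(2-\gamma)/(1-\gamma)$---is precisely such a modification, and your check that the comparison constant and the remaining Gamma factors are uniform for $\gamma\in[1/2,1)$ is correct.
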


\end{document}